\documentclass[11pt,a4paper]{amsart}
\usepackage{geometry}           
\usepackage{graphicx} 
\usepackage{mathrsfs} 
\usepackage{amsfonts} 
\usepackage{enumitem}
\usepackage{amsthm}
\usepackage{amssymb}
\usepackage{amsmath}
\usepackage{hyperref}
\usepackage{mathtools}
\usepackage{tikz}

\usepackage{cleveref}

\usepackage{marginnote}
\usepackage{subcaption}

\usetikzlibrary{matrix}
\usetikzlibrary{positioning}
\usepackage{tikz-cd}
\usetikzlibrary{decorations.pathreplacing}
\usepackage{mathabx}

\numberwithin{equation}{section}

\newtheorem{theorem}[equation]{Theorem}

\newtheorem{corollary}[equation]{Corollary}
\newtheorem{lemma}[equation]{Lemma}
\newtheorem{proposition}[equation]{Proposition}

\newtheorem{claim}[equation]{Claim}

\theoremstyle{definition}
\newtheorem{definition}[equation]{Definition}

\newtheorem*{convention}{Convention}

\newtheorem{example}[equation]{Example}

\theoremstyle{remark}
\newtheorem{remark}[equation]{Remark}
\newtheorem{question}{Question}

\AddToHook{env/theorem/begin}{\crefalias{equation}{theorem}}
\AddToHook{env/lemma/begin}{\crefalias{equation}{lemma}}
\AddToHook{env/corollary/begin}{\crefalias{equation}{corollary}}
\AddToHook{env/proposition/begin}{\crefalias{equation}{proposition}}
\AddToHook{env/claim/begin}{\crefalias{equation}{claim}}
\AddToHook{env/definition/begin}{\crefalias{equation}{definition}}
\AddToHook{env/assumption/begin}{\crefalias{equation}{assumption}}
\AddToHook{env/convention/begin}{\crefalias{equation}{convention}}
\AddToHook{env/observation/begin}{\crefalias{equation}{observation}}
\AddToHook{env/example/begin}{\crefalias{equation}{example}}
\AddToHook{env/remark/begin}{\crefalias{equation}{remark}}
\AddToHook{env/question/begin}{\crefalias{equation}{question}}
\AddToHook{env/conjecture/begin}{\crefalias{equation}{conjecture}}

\crefname{theorem}{Theorem}{Theorems}
\crefname{lemma}{Lemma}{Lemmas}
\crefname{proposition}{Proposition}{Propositions}
\crefname{definition}{Definition}{Definitions}
\crefname{corollary}{Corollary}{Corollaries}
\crefname{example}{Example}{Examples}
\crefname{assumption}{Assumption}{Assumptions}
\crefname{remark}{Remark}{Remarks}
\crefname{conjecture}{Conjecture}{Conjectures}
\newcommand{\nc}{\newcommand}
\nc{\dmo}{\DeclareMathOperator}
\dmo{\ra}{\rightarrow}
\dmo{\N}{\mathbb{N}}
\dmo{\F}{\mathbb{F}}
\dmo{\Z}{\mathbb{Z}}
\dmo{\R}{\mathbb{R}}
\dmo{\UD}{\mathsf{UD}}
\dmo{\D}{\mathsf{D}}

\DeclareMathOperator{\id}{\mathrm{Id}}

\usepackage{stackengine}
\stackMath

\newcommand{\EO}{%
  \mathord{\stackon[-9.4pt]{\mathcal{G}{}}{\widehat{\phantom{\mathcal{G}{}}}}}%
}

\newcommand{\EOW}{\EO^{\raisebox{-0.6ex}{$\scriptstyle\mathfrak S$}}}

\dmo{\Stab}{Stab}
\dmo{\B}{Big}
\dmo{\GBG}{\mathbb{B}}
\dmo{\Sat}{Sat}
\dmo{\diam}{diam}
\dmo{\Aut}{Aut}
\dmo{\MCG}{MCG}
\dmo{\Min}{Min}
\dmo{\Isom}{Isom}
\dmo{\syl}{syl}
\dmo{\dist}{\mathsf{d}}

\DeclareMathOperator{\CAT}{CAT}

\newcommand{\calC}{\mathcal{C}}

\newcommand{\bfF}{\mathbf{F}}

\newcommand{\bbA}{\mathbb{A}}

\newcommand{\frG}{\mathfrak{G}}

\newcommand{\frS}{\mathfrak{S}}

\newcommand\loops\ell
\renewcommand\bar\overline
\renewcommand\tilde\widetilde

\makeatletter
\newcommand{\introsubsection}{%
  \@startsection{subsection}{\@M}%
  \z@{.5\linespacing\@plus.7\linespacing}{-.5em}%
  {\normalfont\bfseries}%
}
\makeatother
\subjclass[2020]{20F65, 20F67}
\keywords{Right-angled Artin groups, Hierarchically hyperbolic groups, Embeddability}

\begin{document}
\title[From algebraic orthogonality to RAAG embedding obstructions in HHGs]{From algebraic orthogonality to RAAG embedding obstructions in hierarchically hyperbolic groups}
\author{Sangrok Oh}
\address{Innovation Center for MathScience Research \& Education, Pusan National University, Korea}
\email{SangrokOh.math@gmail.com}
\author{Jihoon Park}
\address{Department of Mathematics Education, Kyungpook National University, Korea}
\email{ttsiug@knu.ac.kr}


\begin{abstract}
We introduce the \emph{expanded core graph}, which records minimal unbounded domains and their axial directions, for two classes of hierarchically hyperbolic group (HHG) structures modeled on compact special groups and mapping class groups. Our main structural result shows that every embedding of a right-angled Artin group (RAAG), after replacing its standard generators by positive powers, factors through an intermediate RAAG generated by suitably supported axial elements; for the class modeled on compact special groups, this intermediate RAAG is quasi-isometrically embedded. We show that its extension graph embeds into the expanded core graph. This yields a Kim--Koberda-type obstruction to RAAG embeddings and a complete embedding criterion when the rank is at most two. For the standard HHG structure on a mapping class group, the expanded core graph is the disjointness graph of essential curves, while for natural rich-family structures on a RAAG it recovers the extension graph. We also establish permanence results under finite direct products and the standard relatively hyperbolic construction.
\end{abstract}

\maketitle
\setcounter{tocdepth}{1}
\tableofcontents

\section{Introduction}

A recurring problem in geometric group theory is to determine which right-angled Artin groups (RAAGs) embed into a given ambient group and whether the resulting subgroups are undistorted. Two principal motivating examples are RAAGs themselves and mapping class groups. 
For a RAAG \(\bbA(\Gamma)\), the extension graph \(\Gamma^e\) introduced by Kim--Koberda records commutation among conjugates of standard generators: an induced embedding \(\Lambda\le\Gamma^e\) gives an embedding \(\bbA(\Lambda)\le\bbA(\Gamma)\), while an arbitrary RAAG embedding yields the clique-graph obstruction \(\Lambda\le(\Gamma^e)_k\) \cite{KK13}; see \Cref{def:extension graph,Thm:KK RAAG emb Thm}. 
In mapping class groups, collections of mapping classes supported on suitably positioned subsurfaces generate RAAGs after passing to sufficiently large powers \cite{Kob12,CLM12}, while Kim--Koberda showed that an embedding \(\bbA(\Lambda)\le\MCG(S)\) yields a clique-graph obstruction in the disjointness graph of essential curves \cite{KKObstruction}.

Both settings reflect the same general mechanism: elements are associated to geometric supports, and commutation is governed by an independence relation among those supports. Hierarchically hyperbolic groups provide a natural framework for this viewpoint. An HHG \((G,\frS)\) is organized by hyperbolic spaces indexed by domains, with orthogonality encoding independent directions; the framework includes mapping class groups and compact special groups \cite{BHS17I,BHS19II}. See \Cref{Subsection:HHG} for the background used in this paper.

Our aim is to extend the Kim--Koberda obstruction mechanism from ambient RAAGs to this broader setting. We introduce a graph associated to the chosen HHG structure and show, under suitable hypotheses, that every RAAG embedding factors through an intermediate RAAG whose extension graph embeds into this new graph. 
The Kim--Koberda clique-graph obstruction can therefore be transferred to the ambient HHG. For natural HHG structures on RAAGs, our graph coincides with the usual extension graph, recovering the original Kim--Koberda obstruction.

\introsubsection{From arbitrary embeddings to intermediate RAAGs}

In our previous work \cite{OP25}, we investigated the constructive direction of the RAAG embedding problem for HHGs. We showed that, under a virtual commutation condition, sufficiently large powers of a geometrically irredundant collection of axial elements, each fully supported on a single unbounded domain, generate the RAAG prescribed by the orthogonality relations among their supporting domains. Under an additional bounded-orbit condition, the resulting RAAG subgroup is quasi-isometrically embedded. For rigidly fully supported elements, the bounded-orbit condition is automatic, while the virtual commutation condition follows after passing to a common positive power. See \Cref{Def:FSE,Prop:SupportedAxialRAAGs,Rem:RigidSupportAndOPTheorem} for the relevant terminology and statements.

Here we study the converse problem. Starting with an arbitrary injective homomorphism from a RAAG into an HHG, we ask whether suitable powers of the standard generators can be made to factor through a RAAG subgroup generated by geometrically controlled axial elements. Our approach is inspired by the intermediate-RAAG reductions underlying the Kim--Koberda obstruction arguments for RAAGs and mapping class groups \cite{KK13,KKObstruction}. In the RAAG setting, the images of standard generators are decomposed into pure factors and their commutation data are organized into an intermediate RAAG. A closely related reduction appears in the mapping class group setting, where powers of the images are decomposed into pure components that generate an intermediate RAAG whose defining graph is realized in the disjointness graph of curves. In the HHG setting, active domains and orthogonality play the corresponding roles.

To formalize this decomposition, we introduce a class \(\Xi\) of HHG structures. Roughly speaking, its defining assumptions provide \emph{strongly} fully supported axial factors, decompose suitable powers of infinite-order elements into such factors, and control their commutation. 
Its clean-container subclass contains the one-domain HHG structures on hyperbolic groups and the rich-family HHG structures on compact special groups, and is closed under the standard relatively hyperbolic and finite direct-product constructions; see \Cref{Def:OurGUandFU,Def:OtherTwoProperties,Cor:XiCleanContainers}. We discuss the precise assumptions and their relation to the framework of Abbott--Behrstock \cite{AB23} after stating the main results.

Combining these structural properties with the RAAG embedding theorem from \cite{OP25}, we obtain the following factorization through an intermediate RAAG.

\begin{theorem}[Intermediate RAAG, \Cref{Prop:IntermediateRAAG}]\label{Thm:IntroIntermediateRAAG}
Let \((G,\frS)\) be an HHG belonging to \(\Xi\), and let \(\bbA(\Lambda)\) be a RAAG. Suppose that \(\phi\colon\bbA(\Lambda)\to G\) is an injective homomorphism. Then there exist a positive integer \(N\) and an undistorted subgroup \(M\le G\), isomorphic to a RAAG and generated by strongly fully supported axial elements, such that the assignment \(v\mapsto\phi(v)^N\) for each vertex \(v\in V(\Lambda)\) extends to an injective homomorphism \(\bbA(\Lambda)\rightarrow M\).
\end{theorem}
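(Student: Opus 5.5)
The plan is to imitate the Kim--Koberda intermediate-RAAG reduction: decompose the images of the generators into strongly fully supported axial factors, then apply the RAAG embedding theorem of \cite{OP25} (\Cref{Prop:SupportedAxialRAAGs}) to the collection of all these factors.

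\textbf{Step 1: decomposing the generators.} Since \(\phi\) is injective, each \(\phi(v)\) has infinite order. The defining assumptions of \(\Xi\) (\Cref{Def:OurGUandFU,Def:OtherTwoProperties}) then give a positive power \(N_v\) and a decomposition
\[
\phi(v)^{N_v}=g_{v,1}\cdots g_{v,k_v},
\]
where the factors \(g_{v,i}\) pairwise commute. Each \(g_{v,i}\) is strongly fully supported on a single unbounded domain \(U_{v,i}\), and distinct factors of the same element have pairwise orthogonal domains. Let \(N_0\) be a common multiple of the \(N_v\).

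\textbf{Step 2: organizing the factors.} Let \(\mathcal F\) be the set of all factors \(g_{v,i}\). First I remove geometric redundancy: when two factors are supported on the same domain, or on domains in the same \(G\)-orbit with commensurable axial behaviour, I replace them by a common axial element of which both are powers, after raising exponents. This uses the axis-uniqueness for strongly fully supported elements built into the class \(\Xi\). The result is a geometrically irredundant collection \(\mathcal F'\) such that each \(g_{v,i}\) is a power of some element of \(\mathcal F'\).

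I then invoke the commutation-control property of \(\Xi\). If \(v\) and \(w\) are adjacent, then \(\phi(v)\) and \(\phi(w)\) commute, so each factor of one commutes with each factor of the other, up to passing to powers. Strong full support converts this commutation into either orthogonality or equality of supporting domains. Passing to a common positive power also gives the virtual commutation condition required by \Cref{Prop:SupportedAxialRAAGs}; \Cref{Rem:RigidSupportAndOPTheorem} says this is automatic, and gives the bounded-orbit condition, in the rigidly supported case.

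Applying \Cref{Prop:SupportedAxialRAAGs}, there is a power \(K\) such that
\[
M=\langle h^K : h\in\mathcal F'\rangle
\]
is a RAAG \(\bbA(\Delta)\). Here \(\Delta\) is the orthogonality graph of the supports, and \(M\) is quasi-isometrically embedded, hence undistorted. Choose \(N\) to be a multiple of \(N_0K\) that is large enough for every \(\phi(v)^N\) to be a product of powers of the generators of \(M\).

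\textbf{Step 3: injectivity of \(\bbA(\Lambda)\to M\) (main obstacle).} Define \(\psi\colon\bbA(\Lambda)\to M\) on generators by \(v\mapsto\phi(v)^N\). This is a well-defined homomorphism because commuting generators map to commuting elements. The difficulty is injectivity, since injectivity of \(\phi\) does not automatically pass to powers. I would argue as in \cite{KK13}. The map \(v\mapsto v^N\) defines an injective endomorphism \(\bbA(\Lambda)\to\bbA(\Lambda)\), by the standard result that powers of standard generators generate a copy of the same RAAG. Then \(\psi\) equals \(\phi\) composed with this endomorphism, and its image lies in \(M\). A composition of injective maps is injective, so \(\psi\) is injective as a map into \(G\), and hence as a map into \(M\). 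The genuinely delicate part is therefore not injectivity but the containment \(\phi(v)^N\in M\), with \(N\) independent of \(v\). That containment rests on the decomposition and irredundancy steps above, and it is where I expect most of the technical work, namely the uniqueness of axial factors up to powers.
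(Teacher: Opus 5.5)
Your proposal is correct and follows essentially the paper's route. You decompose a common power of each \(\phi(v)\) into commuting strongly fully supported factors, prune to a geometrically irredundant collection via common powers (\Cref{Lem:FiniteCommonPowerReduction}), apply \Cref{Thm:RAAGEmbeddingTheorem} to obtain the undistorted RAAG \(M\), choose \(N\) so that every \(\phi(v)^N\in M\), and get injectivity by composing \(\phi\) with the power endomorphism of \Cref{Lem:PowerEmbeddingRAAG}.

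Two parts of your Step 2 are unnecessary. First, the claim that factors of adjacent generators commute is not needed, and it is not justified as you state it. The commutative property already gives the commutation hypothesis of \Cref{Prop:SupportedAxialRAAGs} with \(k_1=1\) for all orthogonally supported pairs, and \(v\mapsto\phi(v)^N\) is a homomorphism simply because \(\phi\) is one. Second, factors supported on distinct domains have different bigsets and so are automatically geometrically irredundant, so no \(G\)-orbit considerations are needed.
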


Thus, after passing to powers of the standard generators, an arbitrary RAAG embedding is reduced to an embedding between RAAGs, while the intermediate RAAG remains geometrically well behaved in the ambient HHG.

\begin{convention}
For readability, the \(\Xi\)-results in the introduction are stated for groups equipped directly with an HHG structure in \(\Xi\). In the body, we prove the corresponding statements more generally for groups having a finite-index subgroup equipped with such a structure; see \Cref{Def:VirtualHHGStructure}. We use the same convention below for the class \(\Omega\).
\end{convention}


\introsubsection{The expanded core graph and Kim--Koberda-type obstructions}

To extract a combinatorial obstruction from the intermediate factorization, we associate to an HHG \((G,\frS)\) the \emph{expanded core graph} \(\EOW\). Its underlying domains are the minimal elements of the subposet of unbounded domains. Each such domain contributes one vertex when its associated hyperbolic space is a quasi-line, and countably many vertices otherwise, corresponding to the distinct axial directions that it supports. Adjacency records orthogonality; see \Cref{Def:ExpandedCoreGraph,Lem:AxialDirections}.

The expansion is needed in the general HHG setting because a minimal unbounded domain need not determine a unique axial direction. In the RAAG case, the extension graph distinguishes the relevant directions directly through conjugates of standard generators. For a general HHG, however, a single minimal unbounded domain may support several inequivalent axial directions. The orthogonality graph of minimal unbounded domains (here, we call it the \emph{core graph}) alone therefore does not contain enough information to encode the extension graphs of the intermediate RAAGs.

The connection with extension graphs is the key point. If \(M\cong\bbA(\Gamma)\) is an intermediate right-angled Artin subgroup of \(G\) generated by strongly fully supported axial elements, then \(\Gamma^e\le\EOW\); see \Cref{lem:intermediate RAAG}. Combining this embedding with \Cref{Thm:IntroIntermediateRAAG} and the Kim--Koberda obstruction gives our main combinatorial result.

\begin{theorem}[RAAG obstruction, \Cref{Thm:RAAGObstruction}]
\label{Thm:IntroObstruction}
Let \((G,\frS)\in\Xi\), and let \(\Lambda\) be a finite graph. If \(\bbA(\Lambda)\) embeds into \(G\), then \(\Lambda\) embeds into the clique graph \((\EOW)_k\) of the expanded core graph \(\EOW\) as an induced subgraph.
\end{theorem}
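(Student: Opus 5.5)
The argument chains three facts: the intermediate RAAG factorization, the embedding of extension graphs into the expanded core graph, and the Kim--Koberda clique-graph obstruction for RAAGs inside RAAGs. Suppose \(\phi\colon\bbA(\Lambda)\to G\) is injective. By \Cref{Thm:IntroIntermediateRAAG} there are a positive integer \(N\) and a subgroup \(M\le G\) with the following properties:
\begin{itemize}
\item \(M\cong\bbA(\Gamma)\) for some graph \(\Gamma\);
\item \(M\) is generated by strongly fully supported axial elements;
\item \(v\mapsto\phi(v)^N\) extends to an injective homomorphism \(\psi\colon\bbA(\Lambda)\to M\).
\end{itemize}
We may take \(\Gamma\) finite, since the generating set produced by \Cref{Prop:IntermediateRAAG} comes from decomposing the finitely many elements \(\phi(v)^N\) into finitely many axial factors. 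If that is not explicit, we can instead restrict to the finitely generated sub-RAAG spanned by the factors involved.

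Next, the map \(v\mapsto v^N\) on standard generators extends to an injective endomorphism of \(\bbA(\Lambda)\). This is standard: powers of standard generators generate the RAAG on the same graph. Precomposing with it, we see that \(\bbA(\Lambda)\) embeds in \(\bbA(\Gamma)\). We can also observe this directly, since \(\psi\) is itself an embedding of \(\bbA(\Lambda)\) into \(M\cong\bbA(\Gamma)\). Either way, the Kim--Koberda theorem for RAAGs (\Cref{Thm:KK RAAG emb Thm}) applies and gives \(\Lambda\le(\Gamma^e)_k\) as an induced subgraph.

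Finally, since the generators of \(M\) are strongly fully supported axial elements, \Cref{lem:intermediate RAAG} gives an induced embedding \(\Gamma^e\le\EOW\). The clique-graph construction is monotone under induced subgraphs: a clique of \(\Gamma^e\) is a clique of \(\EOW\), and two cliques of \(\Gamma^e\) span a clique in \(\Gamma^e\) if and only if they do in \(\EOW\), because the embedding is induced. Therefore \((\Gamma^e)_k\le(\EOW)_k\) is an induced subgraph, and composing the two embeddings gives \(\Lambda\le(\EOW)_k\).

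The main obstacle is checking that all hypotheses match. The Kim--Koberda theorem needs \(\Gamma\) finite. The embedding \(\Gamma^e\le\EOW\) must be induced, so that the clique-graph monotonicity applies. The identification of \(M\) with \(\bbA(\Gamma)\) must send the chosen axial generators to the standard generators, as \Cref{lem:intermediate RAAG} presumably requires. If finiteness of \(\Gamma\) is not automatic, I would first restrict to the subgroup generated by the finitely many axial factors appearing in the decompositions of the \(\phi(v)^N\), which is a finitely generated standard sub-RAAG.
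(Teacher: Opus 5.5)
Your proposal is correct and follows the paper's proof essentially verbatim: you factor through the intermediate RAAG $M\cong\bbA(\Gamma)$ from \Cref{Prop:IntermediateRAAG}, apply \Cref{Thm:KK RAAG emb Thm} to get $\Lambda\le(\Gamma^e)_k$, and push this forward through the induced embedding $\Gamma^e\le\EOW$ of \Cref{lem:intermediate RAAG}, using that clique graphs respect induced embeddings. The hypotheses you were unsure about hold by construction: $M=\langle\alpha_1^L,\dots,\alpha_r^L\rangle$ has finitely many generators, and \Cref{Thm:RAAGEmbeddingTheorem} marks them by the orthogonality-defined graph, so your fallback of restricting to a sub-RAAG is never needed.
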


This theorem is an HHG analogue of the Kim--Koberda obstruction.
Indeed, the factorization produces an embedding \(\bbA(\Lambda)\le\bbA(\Gamma)\), so the Kim--Koberda theorem gives \(\Lambda\le(\Gamma^e)_k\); the induced embedding \(\Gamma^e\le\EOW\) then transfers this obstruction to the ambient HHG.

The clique graph appears because the resulting embedding \(\bbA(\Lambda)\rightarrow\bbA(\Gamma)\) is a general RAAG embedding, rather than one known to arise from an induced embedding of \(\Lambda\) into \(\Gamma^e\). The general Kim--Koberda theorem therefore gives only \(\Lambda\leq(\Gamma^e)_k\). If \((G,\frS)\) has rank at most \(2\), however, the defining graph of every intermediate RAAG is triangle-free, so the stronger Kim--Koberda extension-graph criterion applies. This yields a complete criterion in terms of the expanded core graph.

\begin{theorem}[Rank-two criterion, \Cref{Thm:RankTwoCriterionHHG}]
\label{Thm:IntroRankTwo}
Let \((G,\frS)\in\Xi\) have rank at most \(2\). Then, for every finite graph \(\Lambda\), the following are equivalent:
\begin{enumerate}
\item \(\bbA(\Lambda)\) embeds into \(G\);
\item \(\Lambda\) embeds into \(\EOW\) as an induced subgraph.
\end{enumerate}
\end{theorem}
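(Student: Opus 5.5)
The proof splits into the two implications.

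For \((1)\Rightarrow(2)\), let \(\phi\colon\bbA(\Lambda)\to G\) be injective. Apply \Cref{Thm:IntroIntermediateRAAG} to get \(N\) and an undistorted subgroup \(M\cong\bbA(\Gamma)\), generated by strongly fully supported axial elements, with \(v\mapsto\phi(v)^N\) giving an embedding \(\bbA(\Lambda)\hookrightarrow\bbA(\Gamma)\). By \Cref{lem:intermediate RAAG}, \(\Gamma^e\) is an induced subgraph of \(\EOW\).

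The key additional input is that \(\Gamma\) is triangle-free. Each generator of \(M\) is fully supported on a single minimal unbounded domain, and adjacent generators have orthogonal supports. A triangle in \(\Gamma\) would therefore give three pairwise orthogonal unbounded domains. This contradicts rank at most \(2\), which in this paper's convention means there are no three pairwise orthogonal unbounded domains. If two generators were supported on the same domain they would not be orthogonal, so the three supports are genuinely distinct. Hence \(\Gamma\) is triangle-free.

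The Kim--Koberda theorem for triangle-free \(\Gamma\) (\Cref{Thm:KK RAAG emb Thm}) then says \(\bbA(\Lambda)\le\bbA(\Gamma)\) forces \(\Lambda\le\Gamma^e\) as an induced subgraph. Composing with \(\Gamma^e\le\EOW\) gives (2). I expect this step to be the main obstacle: one must check that the triangle-free version of Kim--Koberda applies exactly as stated, including the case where \(\Lambda\) may contain triangles (it cannot, a posteriori). One must also confirm that the rank convention rules out orthogonal triples of the relevant minimal domains.

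For \((2)\Rightarrow(1)\), suppose \(\Lambda\le\EOW\) is finite and induced. Each vertex of \(\Lambda\) is a pair consisting of a minimal unbounded domain \(U\) and an axial direction on \(\mathcal{C}U\). Using \Cref{Lem:AxialDirections} and the \(\Xi\) assumptions, choose for each vertex a strongly (hence rigidly) fully supported axial element realizing that direction. Distinct vertices on the same domain have inequivalent directions; distinct domains are either orthogonal or not, matching adjacency in \(\EOW\). Verify that the chosen collection is geometrically irredundant. Then invoke \Cref{Prop:SupportedAxialRAAGs} together with \Cref{Rem:RigidSupportAndOPTheorem}: for rigidly fully supported elements the bounded-orbit condition is automatic, and virtual commutation holds after passing to a common power. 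Sufficiently large powers then generate \(\bbA(\Lambda)\), with adjacency given by orthogonality, which is precisely adjacency in \(\EOW\). This yields the embedding. The delicate point in this direction is geometric irredundancy for two elements on the same non-quasi-line domain, which is what inequivalence of axial directions should provide.
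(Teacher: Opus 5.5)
Your proposal is correct and follows the paper's proof in both directions. For (1)$\Rightarrow$(2) the paper likewise combines \Cref{Prop:IntermediateRAAG}, triangle-freeness of $\Gamma$ from rank at most $2$, the triangle-free Kim--Koberda criterion (\Cref{Thm:EquivalenceforEmbedding} rather than \Cref{Thm:KK RAAG emb Thm}), and \Cref{lem:intermediate RAAG}; for (2)$\Rightarrow$(1) it picks strongly fully supported representatives of the directions, gets geometric irredundance on a shared domain from \Cref{Lem:StronglyFullySupportedElements}\eqref{Item:TwoSFS}, and applies \Cref{Thm:RAAGEmbeddingTheorem}, which uses the commutative property directly instead of the virtual commutation in \Cref{Rem:RigidSupportAndOPTheorem} (either works), and your only slip, claiming that the generators of $M$ are supported on \emph{minimal} unbounded domains, is harmless because the triangle argument uses only that the supports are unbounded and pairwise orthogonal.
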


Here the \emph{rank} is the maximal cardinality of a collection of pairwise orthogonal unbounded domains. The clique graph can also be avoided, independently of the rank assumption, for certain classes of graphs \(\Lambda\), including forests and complements of linear forests; see \Cref{Cor:PositiveCases}.

The obstruction theorem also has a chromatic consequence. Rich-family HHG structures on compact special groups virtually admit finite equivariant colourings and hence have expanded core graphs of finite chromatic number. Combining this with Erd\H{o}s's construction of finite graphs with arbitrarily large girth and chromatic number gives the following.

\begin{corollary}[Chromatic obstruction for virtually compact special groups, \Cref{Cor:ColourableExamples}]\label{Cor:IntroChromaticObstruction}
Let \(G\) be a virtually compact special group. Then, for every positive integer \(M\), there exists a finite graph \(\Lambda_M\) of girth at least \(M\) such that \(\bbA(\Lambda_M)\) cannot embed into \(G\).
\end{corollary}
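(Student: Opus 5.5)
The plan is to combine \Cref{Thm:IntroObstruction} with a colouring argument. Fix a virtually compact special group \(G\). By the convention above, the \(\Xi\)-results apply to groups with a finite-index subgroup carrying an HHG structure in \(\Xi\). Choose a finite-index compact special subgroup \(H\le G\) with a rich-family HHG structure \((H,\frS)\). This structure lies in the clean-container subclass of \(\Xi\) by \Cref{Cor:XiCleanContainers}. If \(\bbA(\Lambda)\) embeds into \(G\), then \(\bbA(\Lambda)\cap H\) has finite index in \(\bbA(\Lambda)\). Rather than pass to finite-index subgroups of \(\bbA(\Lambda)\), which need not be RAAGs on the same graph, I would use the virtual form of \Cref{Thm:RAAGObstruction} directly for \(G\). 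That form gives an induced embedding \(\Lambda\le(\EOW)_k\), where \(\EOW\) is the expanded core graph of \((H,\frS)\).

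The key input is that \(\EOW\) has finite chromatic number \(c\). Rich-family structures on compact special groups virtually admit finite equivariant colourings. Concretely, a finite-index torsion-free subgroup acts on the set of domains with finitely many orbits, and any two orthogonal domains lie in different orbits; this is the usual property that hyperplanes of a special cube complex do not self-osculate or inter-osculate. Colour each vertex of \(\EOW\) by the orbit of its underlying minimal unbounded domain. All axial directions supported on one domain receive the same colour. This causes no conflict, because adjacency in \(\EOW\) records orthogonality, and no domain is orthogonal to itself. So this is a proper colouring with finitely many colours. I expect this to be the main point to verify: the colouring must be proper on the expanded graph and not just on the core graph, and the finiteness must survive the passage to finite index.

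Next, bound the chromatic number of the clique graph. The vertices of \((\EOW)_k\) are the finite cliques of \(\EOW\), two cliques being adjacent when their union is a clique. Each clique has at most \(c\) vertices, one per colour. Colour a clique \(K\) by the set of colours it uses, a nonempty subset of \(\{1,\dots,c\}\). Two adjacent distinct cliques with the same colour set would have a union that is a clique using only those colours. A clique has at most one vertex of each colour, so the union has size at most that set's size, and the two cliques would coincide. Hence this colouring is proper, and \(\chi((\EOW)_k)\le 2^c-1\).

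Finally, by Erd\H{o}s's theorem, choose a finite graph \(\Lambda_M\) with girth at least \(M\) and chromatic number greater than \(2^c-1\). Any induced subgraph of \((\EOW)_k\) has chromatic number at most \(2^c-1\), so \(\Lambda_M\) does not embed in \((\EOW)_k\). By \Cref{Thm:IntroObstruction}, in its virtual form, \(\bbA(\Lambda_M)\) therefore does not embed into \(G\).
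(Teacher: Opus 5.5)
Your proposal is correct and follows the paper's route (\Cref{Cor:ColourableExamples} via \Cref{Prop:ChromaticObstruction}). That route is a rich-family structure in \(\Xi\), a finite orbit colouring giving \(\chi(\EOW)<\infty\), the bound \(\chi((\EOW)_k)\le 2^{c}-1\) (which you reprove rather than cite from \cite[Lemma~3.2]{KKObstruction}), Erd\H{o}s's theorem, and \Cref{Thm:RAAGObstruction}. Colouring by orbits directly also spares you \Cref{Lem:FiniteIndexXiOmega}, since \(\EOW\) depends only on \(\frS\). One correction: two translates \(U=[F]\) and \(hU\) cannot be orthogonal because hyperplanes of a special cube complex do not self-intersect, not because they avoid self-osculation or inter-osculation: a hyperplane crossing \(F\) would cross its \(h\)-translate, and both map to the same hyperplane of the quotient.
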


\introsubsection{The classes \texorpdfstring{\(\Xi\)}{Xi} and \texorpdfstring{\(\Omega\)}{Omega}, and the main examples}

The classes introduced below build on the two kinds of hypotheses isolated in our previous work \cite{OP25}: the supply of suitably supported axial elements and the conversion of orthogonality into commutation. For the converse problem considered here, one also needs to decompose suitable powers of arbitrary infinite-order elements into factors supported on their individual active domains and to control commutation among the resulting factors. The classes \(\Xi\) and \(\Omega\) package these ingredients in forms adapted, respectively, to a geometric factorization through a quasi-isometrically embedded intermediate RAAG and to its purely algebraic counterpart.

We now describe the structural assumptions defining \(\Xi\) more precisely. For each domain \(U\), we use the subgroup \(G_U\) introduced by Abbott--Behrstock \cite[Definition~3.1]{AB23}, which we call the \emph{metric orthogonal stabilizer}; thus \(G_U\) consists of the elements of \(\Stab_G(U)\) acting as the identity on the metric orthogonal factor \(\mathbf E_U\). We require the natural action of \(G_U\) on the nesting factor to make \(G_U\) an HHG with underlying HHS \((\mathbf F_U,\frS_U)\). Motivated by the orthogonal decomposition and commutative properties of \cite{AB23}, we also impose corresponding element-level conditions adapted to strongly fully supported axial elements. Together, these conditions define the class \(\Xi\); see \Cref{Def:OurGUandFU,Def:OtherTwoProperties}. We write \(\Xi_{\mathrm{cc}}\) for the subclass consisting of structures with clean containers.

Our class is inspired by, but is not identical to, the class denoted by \(\Xi\) in \cite{AB23}. Our \(\mathbf F_U\) stabilizers property strengthens the corresponding property of \cite{AB23}, while their orthogonal decomposition and commutative properties imply the corresponding properties used here. Since clean containers are part of the definition of \(\Xi\) in \cite{AB23}, an Abbott--Behrstock \(\Xi\)-structure which also satisfies our strengthened \(\mathbf F_U\) stabilizers property belongs to \(\Xi_{\mathrm{cc}}\). We discuss these differences, as well as the distinction between metric and hierarchical orthogonal stabilizers, in \Cref{Rem:ComparisonWithAB,Appendix:ComparisonWithAB}.

The class \(\Xi\) is designed not only to produce the intermediate RAAG but also to ensure that it is quasi-isometrically embedded. For the purely algebraic obstruction, this geometric conclusion is unnecessary. We therefore introduce a second class, denoted by \(\Omega\), whose axioms are formulated directly in terms of the supply, decomposition, and commutation properties of fully supported axial elements; see \Cref{Def:Omega}.
For HHG structures in \(\Omega\), the same intermediate-RAAG factorization holds algebraically, and the extension graph of the intermediate RAAG again embeds into the expanded core graph. Consequently, the obstruction theorem, the positive cases, and the rank-two criterion remain valid, and the same chromatic argument applies. The corresponding quasi-isometric embeddedness conclusion is not obtained under the \(\Omega\)-axioms; see \Cref{Prop:OmegaAnalogues,Prop:ChromaticObstruction}. By the convention above, all these conclusions also hold for groups admitting a virtual HHG structure in \(\Omega\). In particular, the chromatic obstruction applies to groups admitting a colourable virtual HHG structure in either \(\Xi\) or \(\Omega\); see \Cref{Cor:ColourableExamples}.

These two examples play complementary roles: the mapping class group case motivates the alternative framework \(\Omega\), while the RAAG case shows that the expanded core graph recovers the extension graph in the original Kim--Koberda setting.

\begin{proposition}[Expanded core graphs in the motivating examples]\label{Prop:IntroExamples}
The following hold.
\begin{enumerate}
\item {\rm(\Cref{Prop:MCGinOmega})}
Let \(S\) be a finite-type orientable surface with \(\chi(S)<0\). Then the standard HHG structure on \(\MCG(S)\) belongs to \(\Omega\), and its expanded core graph is naturally identified with the disjointness graph of essential simple closed curves on \(S\).

\item {\rm(\Cref{Prop:RAAGExpandedCore})}
Let \(G=\bbA(\Gamma)\) be equipped with a rich-family HHG structure. Then there is an induced embedding \(\EOW\le\Gamma^e\). If the rich family contains every singleton subgraph of \(\Gamma\), then \(\EOW\cong\Gamma^e\).
\end{enumerate}
\end{proposition}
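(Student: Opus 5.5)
The proof splits into the two parts of the statement. Both reduce to understanding the minimal unbounded domains of the given structure, their hyperbolic spaces, and the axial directions supported there (\Cref{Lem:AxialDirections}).

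For part (1), I would take the standard HHG structure on \(\MCG(S)\): domains are isotopy classes of essential subsurfaces (including annuli), nesting is containment, orthogonality is disjointness, and the associated spaces are curve graphs (annular curve graphs for annuli). First I would check the \(\Omega\)-axioms of \Cref{Def:Omega}.
\begin{itemize}
\item \emph{Supply.} Each unbounded domain carries fully supported axial elements: a pseudo-Anosov on the subsurface, or a Dehn twist for an annulus.
\item \emph{Decomposition.} For an infinite-order mapping class, some power is pure. The Nielsen--Thurston canonical reduction system then decomposes it into commuting pseudo-Anosov and twist components, each fully supported on one active subsurface.
\item \emph{Commutation.} Factors supported on disjoint subsurfaces commute. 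Factors on overlapping domains interact as required, by the standard arguments of Koberda and Clay--Leininger--Mangahas.
\end{itemize}
Next I would identify \(\EOW\). Every non-annular essential subsurface contains an essential curve, whose annulus is nested in it and is unbounded because its twist group acts by translation. So the minimal unbounded domains are exactly the annuli, i.e.\ essential simple closed curves. The annular curve graph is a quasi-line, so each annulus contributes exactly one vertex. Orthogonality of annuli is disjointness of the core curves, so \(\EOW\) is the disjointness graph of curves. For sporadic low-complexity surfaces, where annuli may coincide with or degenerate relative to the whole surface, I would check the cases separately.

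For part (2), take \(\bbA(\Gamma)\) with a rich-family structure. Its domains are essentially \(g\)-translates of parallelism classes of subgraphs \(\Delta\) in the rich family, with associated spaces built from factored contact graphs. The plan is as follows.
\begin{enumerate}
\item Show that the minimal unbounded domains are translates \(g\Delta\) where \(\Delta\) is minimal among family members for which the associated space is unbounded. Such a \(\Delta\) contains a vertex \(v\), or a join-irreducible piece, whose generator acts loxodromically on that space.
\item Describe the axial directions in each such domain via \Cref{Lem:AxialDirections}. Each direction is represented by a conjugate \(gv^{\pm}g^{-1}\), or, when the domain's space is not a quasi-line, by several such conjugates.
\item Map each vertex of \(\EOW\) to the corresponding vertex \(gvg^{-1}\) of \(\Gamma^e\). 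Check that the map is well defined and injective, using the fact that distinct directions give non-commensurable axial elements. Then show that orthogonality of the supporting domains holds exactly when the conjugates commute, which is the adjacency of \Cref{def:extension graph}; this gives an induced embedding.
\item If the family contains all singletons \(\{v\}\), then each \(g\{v\}\) is a minimal unbounded quasi-line domain. Every vertex of \(\Gamma^e\) is then hit, so the embedding is an isomorphism.
\end{enumerate}

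The main obstacle is step 3 of part (2): proving that orthogonality of minimal domains corresponds exactly to commutation of the chosen conjugates, and that inequivalent axial directions go to distinct vertices. I would first try to use the correspondence between orthogonality and joins of parallelism classes, together with the classification of centralizers in RAAGs.
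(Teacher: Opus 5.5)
Your part (1) is essentially the paper's argument. You use pure powers and the Nielsen--Thurston decomposition for \Cref{Def:Omega}\eqref{Omega:Decomposition}, pseudo-Anosovs and twists for the supply, disjoint supports for weak commutativity, free subgroups generated by large powers for commutation detection, and annuli as the minimal unbounded domains. To finish it you should add three things. First, the infinite geometrically irredundant family required in \Cref{Def:Omega}\eqref{Omega:AxialDirections}. Second, the same-support dichotomy \Cref{Def:Omega}\eqref{Omega:SameSupportDichotomy}. Third, the observation that an arbitrary fully supported axial element has a pure power which is pseudo-Anosov on \(U\) or a nonzero twist power; only then do the facts about disjoint or overlapping supports apply to it.

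Part (2), however, has a genuine gap. Step 2 asserts that axial directions on a minimal unbounded domain are represented by conjugates of standard generators, and step 3 sends each vertex of \(\EOW\) to ``the corresponding'' \(gvg^{-1}\). This fails whenever \(\calC U\) is not a quasi-line. In that case \(U=[g\widetilde S_\Lambda]\) where \(\Lambda\) has distinct vertices \(v,w\). The element \(gvwg^{-1}\in G_U=g\bbA(\Lambda)g^{-1}\) is strongly fully supported on \(U\): its bigset in \((G_U,\frS_U)\) is nonempty and consists of unbounded domains nested in \(U\), hence equals \(\{U\}\) by minimality. Yet abelianizing shows that it shares no nonzero power with any conjugate of a standard generator. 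So axial directions do not correspond canonically to vertices of \(\Gamma^e\), and your map is undefined on most of \(V_U\). Sending conjugates to their directions instead would give a map in the wrong direction. The missing idea is that \(V_U\) is only an abstract set of prescribed cardinality. The paper sets \(W_U=\{h\,gvg^{-1}h^{-1}\mid v\in V(\Lambda),\ h\in G_U\}\) and chooses an arbitrary bijection \(V_U\to W_U\).

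This requires two facts that your step 1 (``a vertex or a join-irreducible piece'') does not provide.
\begin{itemize}
\item \(\Lambda\) is discrete. If \(v,w\in\Lambda\) were adjacent, then \(\Lambda\cap\operatorname{lk}(v)\in\mathcal R\) would be a proper subgraph containing \(w\). This gives a domain \(U'\sqsubsetneq U\) with \(gwg^{-1}\in G_{U'}\), and the HHG \((G_{U'},\frS_{U'})\) would then produce an unbounded domain properly nested in \(U\). Hence \(G_U\) is cyclic or nonabelian free, and \(|W_U|=|V_U|\): by coboundedness in the cyclic case, and by \Cref{Lem:StronglyFullySupportedElements}\eqref{Item:Quasiline} with \Cref{Lem:ExtensionGraphPowerRigidity} in the free case.
\item Every \(z\in W_U\) is strongly fully supported on \(U\), so \(\B(z)=\{U\}\). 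Thus the sets \(W_U\) are pairwise disjoint and the union map is injective.
\end{itemize}
Finally, the step you flag as the main obstacle is left unproved, but it needs no analysis of joins or centralizers. Rich-family structures lie in \(\Xi\) by \Cref{Prop:Xiclass}, so orthogonal supports give commuting elements by the commutative property. Conversely, distinct commuting elements have orthogonal supports by \Cref{lem:commuting factors}, because the common-power alternative forces equality by \Cref{Lem:ExtensionGraphPowerRigidity}\eqref{Item:CommonPowersExtensionVertices}.
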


Thus, for mapping class groups, the expanded core graph recovers the disjointness graph of essential curves, and the general theorem gives
\[\bbA(\Lambda)\le \MCG(S)\qquad\Longrightarrow\qquad \Lambda\le \calC(S)_k. \]
This recovers \cite[Lemma~3.3]{KKObstruction}; combined with the chromatic argument below, it also recovers \cite[Theorem~1.2]{KKObstruction}.
For RAAGs, the expanded core graph embeds as an induced subgraph of the extension graph for every rich-family structure considered here, and for natural choices of rich family the two graphs coincide. Hence our general HHG framework extends the Kim--Koberda obstruction mechanism while recovering it exactly in its original setting.

We also establish permanence properties for the classes \(\Xi_{\mathrm{cc}}\) and \(\Omega\).

\begin{proposition}[Permanence, \Cref{Lem:FiniteIndexXiOmega,Cor:XiCleanContainers,Prop:OmegaPermanence}]
The classes \(\Xi_{\mathrm{cc}}\) and \(\Omega\) are preserved under restriction to finite-index subgroups, without changing the rank or the expanded core graph. Moreover, both classes are closed under finite direct products equipped with the standard direct-product HHG structures and under the standard relatively hyperbolic construction.
\end{proposition}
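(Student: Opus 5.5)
The permanence proposition has three parts: finite-index restriction, finite direct products, and the relatively hyperbolic construction. I would treat them in that order, in each case checking the defining axioms of \(\Xi_{\mathrm{cc}}\) (\Cref{Def:OurGUandFU,Def:OtherTwoProperties}) and of \(\Omega\) (\Cref{Def:Omega}) one at a time.

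\textbf{Finite-index restriction.} Let \(H\le G\) have finite index and give it the restricted structure \((H,\frS)\): same index set, same hyperbolic spaces and projections. Since \(H\) acts cofinitely on \(\frS\), it is again an HHG. Rank, unboundedness, minimality and orthogonality depend only on \(\frS\), so the rank and the core graph are unchanged. For the expanded core graph I need the axial directions of \Cref{Lem:AxialDirections} to agree. An axial element \(g\in G\) fully supported on \(U\) has a positive power \(g^k\in H\) with the same axial direction; conversely, axial elements of \(H\) are axial in \(G\). So the vertex sets coincide, and adjacency, which records orthogonality, is unchanged.

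For the axioms, \(H_U=G_U\cap H\) has finite index in \(G_U\), so it still acts geometrically on \(\mathbf F_U\) and \((\mathbf F_U,\frS_U)\) remains an HHG structure for it. The decomposition and commutation axioms are stated up to positive powers, so they pass to \(H\) by taking further powers. Clean containers is a property of \(\frS\) alone, so it is preserved.

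\textbf{Finite direct products.} Take \(G_1\times G_2\) with the standard structure. Its domains are those of each \(\frS_i\), with \(\frS_1\)-domains orthogonal to \(\frS_2\)-domains, together with the adjoined product-type domains. Those adjoined domains have bounded spaces or are not minimal unbounded, so they do not affect the core graph.

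The metric orthogonal stabilizer of a domain \(U\in\frS_1\) is \((G_1)_U\times G_2\), which acts on the same nesting factor as in \(G_1\). An infinite-order \((g_1,g_2)\) has a power decomposing as a product of the decompositions of \(g_1\) and \(g_2\). Factors coming from different coordinates commute, and commutation within a coordinate is inherited. Clean containers must be checked for the new container domains; I expect this to follow from the explicit description of the product structure.

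\textbf{Relatively hyperbolic construction.} This is the step I expect to be the main obstacle. Take \(G\) hyperbolic relative to peripherals \(P_i\), each with a structure in the class. The structure has a top domain given by the coned-off Cayley graph, together with copies of the \(\frS_{P_i}\) indexed by cosets \(gP_i\). Domains from distinct cosets are transverse, and every domain is nested in the top one.

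The key geometric input is the dichotomy for an infinite-order element: it is either loxodromic on the coned-off graph, or it is conjugate into some \(P_i\). In the loxodromic case, the element is fully supported on the top domain. I must show it is strongly fully supported, and that the top domain is not minimal unbounded unless it is the only unbounded domain. The element commutes only with elementary subgroups, so commutation is consistent with the absence of orthogonality to anything else. In the parabolic case, all the axioms transfer by conjugation from \(P_i\).

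For the \(\mathbf F_U\)-stabilizers axiom at the top domain, \(G_U=G\) and \(\mathbf F_U\) is the whole space, so the axiom holds. The delicate point is the commutation axiom between a loxodromic element and a parabolic one, and between parabolic elements in different cosets. I would handle it using malnormality-type properties of the peripherals: commuting elements from different cosets must have finite order, which is excluded. The same case analysis verifies the \(\Omega\)-axioms, omitting the geometric parts.
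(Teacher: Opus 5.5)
Your finite-index argument matches the paper's. One small point: to pass the \(\Xi\) orthogonal decomposition to \(H\), you first need the factors \(g_i\) to commute, which the commutative property supplies; only then does \(h^{nm}=g_1^m\cdots g_k^m\) hold. Your relatively hyperbolic sketch is close to the paper's, but for \(\Xi_{\mathrm{cc}}\) you also need that this construction preserves clean containers, and you never address that. The genuine gap is in the direct-product step for \(\Xi\).

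First, your description of the metric orthogonal stabilizer is wrong. For \(W\in\frS_1\), every domain of \(\frS_2\) is orthogonal to \(W\), so \(\mathbf E_W\) contains the entire \(G_2\)-factor, on which \(\{1\}\times G_2\) acts by left multiplication. Hence \(G_W=(G_1)_W\times\{1\}\), not \((G_1)_W\times G_2\). Your version would in fact refute what you want to prove. The subgroup \(\{1\}\times G_2\) would act trivially on \(\mathbf F_W\), so for infinite \(G_2\) the action would not be proper. Also, elements \((f_1,x)\) and \((y,h_2)\) supported on orthogonal domains in different factors need not commute.

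Second, and this is the missing idea: the \(\mathbf F_U\) stabilizers property must hold at every domain of the product, including the adjoined bounded ones. The hard case is the orthogonal-complement domain \(V_U\) for \(U\in\frS_1\), whose nesting factor is \(\mathbf E^1_U\times G_2\). This is exactly where clean containers in the factors are used as a hypothesis.
\begin{itemize}
\item If \(U\) has a clean container \(C_U\in\frS_1\), then \(\mathbf E^1_U\cong\mathbf F^1_{C_U}\) and \(G_{V_U}=(G_1)_{C_U}\times G_2\). This is an HHG as the direct product of \(((G_1)_{C_U},(\frS_1)_{C_U})\), which is an HHG by the \(\mathbf F_U\) stabilizers property of \(G_1\), with \((G_2,\frS_2)\).
\item If nothing in \(\frS_1\) is orthogonal to \(U\), then \(G_{V_U}=\{1\}\times G_2\).
\item Without a clean container, \(\mathbf E^1_U\) carries the auxiliary non-coordinate domain \(A_U\), and there is no HHG structure on \(G_{V_U}\) to appeal to.
\end{itemize}
You treated clean containers only as a property to propagate to the product. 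In fact it is what makes \(\Xi\) itself closed under products, which is why the statement concerns \(\Xi_{\mathrm{cc}}\). By contrast, \(\Omega\), whose axioms never mention \(G_U\), needs no such hypothesis. For \(\Omega\), note also that an element fully supported on a domain of \(\frS_1\) is only guaranteed to have finite-order \(G_2\)-coordinate, so each axiom requires passing to a power that kills it.
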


More broadly, these results raise questions about the scope of the classes \(\Xi\) and \(\Omega\), how much information about RAAG subgroups is encoded by the expanded core graph, when the clique graph in the general obstruction can be avoided, and how the expanded core graph depends on the chosen HHG structure. We collect these questions at the end of \Cref{Section:Criterion}.

\introsubsection{Outline of the paper}

In \Cref{Section:Preliminaries}, we recall the necessary background on HHGs, axial elements, RAAGs, extension graphs, and rich-family HHG structures on compact special groups. In \Cref{Section:Orthogonality}, we introduce the subgroups \(G_U\), define the class \(\Xi\), and establish the structural results on strongly fully supported axial elements used in the sequel. In \Cref{Section:IntermediateRAAG}, we define the expanded core graph and prove the intermediate RAAG factorization together with the corresponding extension-graph embedding. In \Cref{Section:Criterion}, we derive the general obstruction and the rank-two criterion, introduce the class \(\Omega\), analyze the mapping class group and RAAG examples, establish the chromatic obstruction, and conclude with further questions.
Finally, \Cref{Appendix:OmegaPermanence} proves the direct-product and relatively hyperbolic permanence properties of \(\Omega\), while \Cref{Appendix:ComparisonWithAB} compares our framework with that of Abbott--Behrstock, including the metric and hierarchical notions of orthogonal stabilizer and the two versions of the \(\mathbf F_U\) stabilizers property.

\section{Preliminaries}\label{Section:Preliminaries}
Throughout this paper, all graphs are simplicial. For two graphs \(\Lambda\) and \(\Gamma\), we write \(\Lambda\le \Gamma\) if \(\Lambda\) is isomorphic to an induced subgraph of \(\Gamma\). For a finite graph \(\Lambda\) and a group \(G\), we write \(\bbA(\Lambda)\le G\) if \(G\) contains a subgroup isomorphic to the right-angled Artin group \(\bbA(\Lambda)\) with defining graph \(\Lambda\).
Whenever we refer to a quasi-isometric embedding between finitely generated groups, the groups are equipped with word metrics with respect to finite generating sets.

\subsection{Hierarchically hyperbolic groups and axial elements}\label{Subsection:HHG}
We recall only the parts of the definition of a hierarchically hyperbolic space and the standard consequences that will be used below. For convenience, we package them in a definition-style format. For the full definition, see \cite[\S1]{BHS19II}; for a largely self-contained exposition, see \cite{CRHK}.

\begin{definition}[HHS with consequences]\label{Def:HHS}
A \emph{hierarchically hyperbolic space} \((\mathcal X,\frS)\) is a quasi-geodesic metric space \(\mathcal X\) together with the so-called \emph{hierarchically hyperbolic structure} $\frS$. The set \(\frS\) is called the \emph{index set}, and its elements are called \emph{domains}. 
We fix a constant \(E\geq 1\), independent of the domains, such that the following hold.
\begin{enumerate}
\item\label{Item:CoordinateandDF} For each \(U\in\frS\), there is an \(E\)-hyperbolic space \(\calC U\), together with an \((E,E)\)-coarsely Lipschitz projection map $\pi_U:\mathcal X\to\calC U$; furthermore, $\pi_U$ can be assumed to be $E$-coarsely surjective \cite[Proposition~1.16]{DHS17}.
We often regard \(\pi_U(x)\) as the \(U\)-coordinate of \(x\). The distance formula \cite[Theorem~4.5]{BHS19II} expresses distances in \(\mathcal X\), up to uniform multiplicative and additive error, in terms of the large projection distances in the coordinate spaces \(\calC U\).

\item\label{Item:Relations}
The index set \(\frS\) is equipped with nesting, orthogonality, and transversality. For any two distinct domains, these relations are mutually exclusive in the following sense:
\begin{itemize}
\item[(Nesting):]
a partial order \(\sqsubseteq\). Every collection of pairwise \(\sqsubseteq\)-comparable domains has cardinality at most \(E\), which we call \emph{finite complexity}. Moreover, \(\frS\) has a unique \(\sqsubseteq\)-maximal domain, which we usually denote by \(S\).

\item[(Orthogonality):] a symmetric, anti-reflexive relation \(\bot\). If \(V\sqsubseteq U\) and \(U\bot W\), then \(V\bot W\). If \(T\in\frS\), \(U\sqsubsetneq T\), and there exists a domain \(V\sqsubsetneq T\) with \(V\bot U\), then there exists a domain \(W\sqsubsetneq T\), called a \emph{container for the orthogonal complement of \(U\) in \(T\)}, such that every domain \(V\sqsubsetneq T\) satisfying \(V\bot U\) is nested into \(W\). 

\item[(Transversality):] a symmetric relation \(\pitchfork\), where \(U\pitchfork V\) precisely when \(U\) and \(V\) are neither \(\sqsubseteq\)-comparable nor orthogonal.
\end{itemize}

\item\label{Item:RelativeProjection} (Relative projections)
If \(U\pitchfork V\) or \(U\sqsubsetneq V\), then there is a coarsely well-defined subset \(\rho_V^U\subseteq\calC V\) of diameter at most \(E\), which may be regarded as the image of a coarsely constant map \(\rho_V^U\colon\calC U\rightarrow 2^{\calC V}\). If \(U\sqsubsetneq V\), then there is also a coarse projection \(\rho_U^V\colon\calC V\rightarrow 2^{\calC U}\) whose values have uniformly bounded diameter.

\item\label{Item:Consistency} (Consistency) 
For all \(x\in\mathcal X\), the projections satisfy the consistency inequalities: 
\begin{itemize}
\item If \(U\pitchfork V\), then $\min\{\dist_{\calC U}(\pi_U(x),\rho^V_U),\,\dist_{\calC V}(\pi_V(x),\rho^U_V)\}\leq E$.
\item If \(U\sqsubsetneq V\), then $\min\{\dist_{\calC V}(\pi_V(x),\rho^U_V),\, \diam_{\calC U}(\pi_U(x)\cup \rho^V_U(\pi_V(x))) \}\leq E$.
\end{itemize}
If \(U\sqsubsetneq V\), then $\dist_W(\rho^U_W,\rho^V_W)\le E$ whenever $W\in\frS$ satisfies either $V\sqsubsetneq W$, or $V\pitchfork W$ and $W\not\bot U$. 
Similarly, if \(U\bot V\), then by \cite[Lemma~1.5]{DHS17}, $\dist_W(\rho^U_W,\rho^V_W)\le 2E$ whenever both relative projections \(\rho^U_W\) and \(\rho^V_W\) are defined. 

\item\label{Item:Realization} (Realization and partial realization)
Let \(\mathfrak T\subseteq\frS\). A \emph{partial tuple supported on \(\mathfrak T\)} is a collection \(\mathbf b=(b_U)_{U\in\mathfrak T}\), where each \(b_U\subseteq\calC U\) is nonempty and the diameters of the sets \(b_U\) are uniformly bounded. Given \(\kappa\geq 0\), the tuple \(\mathbf b\) is called \emph{\(\kappa\)-consistent} if its coordinates satisfy the consistency inequalities in Item~\eqref{Item:Consistency}, up to the constant \(\kappa\).

The realization theorem \cite[Theorem~3.1]{BHS19II} states that, for every \(\kappa\), there exists \(\theta=\theta(\kappa)\) such that every \(\kappa\)-consistent tuple \(\mathbf b\) supported on \(\frS\) is realized by some \(x\in\mathcal X\), in the sense that \(\dist_{\calC U}(\pi_U(x),b_U)\leq\theta\) for every \(U\in\frS\). Moreover, such a realizing point is coarsely unique. Thus, up to uniformly bounded ambiguity, points of \(\mathcal X\) may be viewed as uniformly consistent tuples of coordinates.

We will also use the following consequence of the partial realization axiom \cite[Definition~1.1(8)]{BHS19II}. If \(U_1,\dots,U_k\in\frS\) are pairwise orthogonal and \(p_i\in\pi_{U_i}(\mathcal X)\), then there exists \(x\in\mathcal X\) such that \(\pi_{U_i}(x)\) is uniformly close to \(p_i\) for every \(i\).
\end{enumerate}
\end{definition}

Following \cite[\S3]{AB23}, we say that an HHS has \emph{clean containers} if, for every finite collection \(\mathcal U=\{U_1,\dots,U_k\}\) of pairwise orthogonal domains that is not maximal with respect to inclusion, there exists a domain \(C_{\mathcal U}\) such that every domain orthogonal to each \(U_i\) is nested into \(C_{\mathcal U}\), and \(C_{\mathcal U}\bot U_i\) for every \(i\). When \(\mathcal U=\{U\}\), we write \(C_U\).

We use the following formulation of hierarchically hyperbolic groups.

\begin{definition}\label{Def:Automorphism}
Let \((\mathcal X,\frS)\) be an HHS. An \emph{automorphism} of \((\mathcal X,\frS)\) consists of 
\begin{itemize}
\item a quasi-isometry \(f\colon\mathcal X\rightarrow\mathcal X\), 
\item a bijection \(f^\sharp\colon\frS\rightarrow\frS\) preserving nesting, orthogonality, and transversality, and 
\item a collection of isometries \(\{f^\diamond(U)\colon\calC U\rightarrow\calC(f^\sharp(U))\mid U\in\frS\}\)
\end{itemize}
such that the following diagrams coarsely commute, with uniform constants, whenever they are defined:
\[
\begin{tikzcd}[column sep=3pc, row sep=2pc]
\mathcal X    \arrow[r, "f"]    \arrow[d, "\pi_U"'] & \mathcal X    \arrow[d, "\pi_{f^\sharp(U)}"]
\\
\calC U    \arrow[r, "{f^\diamond(U)}"]& \calC(f^\sharp(U))
\end{tikzcd}
\qquad\text{and}\qquad
\begin{tikzcd}[column sep=3pc, row sep=2pc]
\calC U    \arrow[r, "{f^\diamond(U)}"]    \arrow[d, "\rho_V^U"']&
\calC(f^\sharp(U))    \arrow[d, "{\rho_{f^\sharp(V)}^{f^\sharp(U)}}"]\\
\calC V    \arrow[r, "{f^\diamond(V)}"] & \calC(f^\sharp(V)),
\end{tikzcd}
\]
where \(\rho_{\star}^{\ast}\) is the relative projection from \Cref{Def:HHS}\eqref{Item:RelativeProjection}.

We say that two automorphisms \(f\) and \(f'\) are \emph{equivalent} if \(f^\sharp=(f')^\sharp\) and \(f^\diamond(U)=(f')^\diamond(U)\) for every \(U\in\frS\). The equivalence classes of automorphisms form the \emph{automorphism group} of \((\mathcal X,\frS)\), denoted by \(\Aut(\frS)\).
\end{definition}

Whenever no ambiguity arises, we suppress the superscripts and denote the underlying quasi-isometry, the induced permutation of \(\frS\), and the coordinate isometries all by \(f\). In particular, when we say that \(f\) acts on \(\calC U\), we mean that \(f^\sharp(U)=U\) and refer to the isometry $f^{\diamond}(U):\mathcal{C}U\to\mathcal{C}(f^{\sharp}(U))=\mathcal{C}U$.

\begin{definition}[HHG structure]\label{Def:HHG}
Let \(G\) be a finitely generated group. A \emph{hierarchically hyperbolic group structure}, or an \emph{HHG structure}, on \(G\) consists of an HHS \((\mathcal X,\frS)\) together with a homomorphism \(G\rightarrow\Aut(\frS)\) such that the induced quasi-action \(G\curvearrowright\mathcal X\) is proper and cobounded, and the induced action \(G\curvearrowright\frS\) has finitely many orbits.

After pulling back the HHS structure along an orbit map, we may assume that \(\mathcal X=G\), equipped with a word metric, and that \(G\) acts on itself by left multiplication.
Following \cite[Remark~2.1]{DHS20}, after uniformly modifying the projection maps and relative projections, we may and shall assume that the HHG structure is \(G\)-equivariant. Thus, for every \(g,x\in G\) and \(U,V\in\frS\), we have \(g\pi_U(x)=\pi_{gU}(gx)\) and \(g\rho_V^U=\rho_{gV}^{gU}\) whenever the corresponding projections are defined.
\end{definition}

A finitely generated group admitting an HHG structure is traditionally called a hierarchically hyperbolic \emph{group}. In this paper, since the choice of HHG structure plays an essential role, we refer to the pair \((G,\frS)\) itself as an \emph{HHG}, and say that \(G\) is the underlying group of \((G,\frS)\).

We record two standard operations on HHG structures that will be used below.

\begin{lemma}[Passage to the essential domains, \cite{ABR25,OP25}]\label{Lem:EssentialReduction}
Let \(\frS_{\mathrm{ess}}\) be the collection of \(U\in\frS\) such that there exists an unbounded domain \(V\sqsubseteq U\). Then \((G,\frS_{\mathrm{ess}})\) is an HHG whose hyperbolic spaces, projections, and hierarchical relations are inherited from \((G,\frS)\). Moreover, every \(\sqsubseteq\)-minimal domain in \(\frS_{\mathrm{ess}}\) is unbounded.

If \(\frS_{\mathrm{ess}}=\emptyset\), then \(G\) is finite.
\end{lemma}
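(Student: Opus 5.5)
We verify the HHS axioms for \((G,\frS_{\mathrm{ess}})\) one at a time, keeping the same space \(G\), hyperbolic spaces \(\calC U\), projections \(\pi_U\), relative projections and relations, and then check the two extra assertions. First, \(\frS_{\mathrm{ess}}\) is \(G\)-invariant, since \(g\) maps \(\calC V\) isometrically onto \(\calC(gV)\) and preserves nesting. Hence \(G\) still acts on \(\frS_{\mathrm{ess}}\) with finitely many orbits and by automorphisms. The axioms that only involve pairs or collections of domains are inherited verbatim by restriction: hyperbolicity, projections, nesting with finite complexity, orthogonality, transversality, the relative projections, and consistency. The maximal domain \(S\) lies in \(\frS_{\mathrm{ess}}\) as long as some unbounded domain exists, because every domain nests into \(S\).

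The containers need a check. Suppose \(U\sqsubsetneq T\) in \(\frS_{\mathrm{ess}}\) and there is \(V\sqsubsetneq T\) in \(\frS_{\mathrm{ess}}\) with \(V\bot U\). The original axiom gives a container \(W\in\frS\) with \(V\sqsubseteq W\). Since \(V\) contains an unbounded domain, so does \(W\), and therefore \(W\in\frS_{\mathrm{ess}}\). Every essential \(V'\) orthogonal to \(U\) and properly nested in \(T\) still nests into \(W\). The same argument handles clean containers if needed.

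The substantive point is the distance formula and realization: discarding domains must not change the geometry. Let \(s_0\) be large enough that \(\diam\calC V\le B\) for all bounded \(V\) with a uniform \(B\); this uses finitely many \(G\)-orbits and equivariance. A domain \(U\notin\frS_{\mathrm{ess}}\) is itself bounded, so the distance-formula sum over \(\frS\) with threshold \(s> B\) only sees domains of \(\frS_{\mathrm{ess}}\). Large links, bounded geodesic image and uniqueness involve only domains with large projections, so they also pass to the restriction.

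For realization, take a consistent tuple on \(\frS_{\mathrm{ess}}\) and extend it to all of \(\frS\). For \(U\) nested in \(V\in\frS_{\mathrm{ess}}\) or transverse to an essential domain, we set \(b_U\) using \(\rho\) maps where appropriate, or simply \(b_U=\calC U\). Since \(\calC U\) has diameter at most \(B\), all consistency inequalities involving \(U\) hold up to \(B\). Partial realization is handled the same way. This extension step is the one I expect to need most care, mainly in bookkeeping the constants uniformly. An alternative is to cite the general fact that removing uniformly bounded domains with no unbounded descendant yields an HHS, as in \cite{ABR25}.

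Minimality is quick. If \(U\) is \(\sqsubseteq\)-minimal in \(\frS_{\mathrm{ess}}\), it has an unbounded \(V\sqsubseteq U\), and \(V\in\frS_{\mathrm{ess}}\). By minimality \(V=U\), so \(U\) is unbounded. Finally, if \(\frS_{\mathrm{ess}}=\emptyset\), every \(\calC U\) is uniformly bounded. The distance formula with threshold above \(B\) then gives \(\dist_G(x,y)\) uniformly bounded, so \(G\) is bounded in its word metric and hence finite.
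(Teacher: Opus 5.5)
Your axiom-by-axiom verification is correct. The paper gives no proof of its own here and simply cites \cite{ABR25,OP25}; your argument is the standard one, resting on the facts that \(\frS_{\mathrm{ess}}\) is upward closed under \(\sqsubseteq\) and that its complement consists of uniformly bounded domains (by cofiniteness and equivariance).

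The justification for large links needs tightening: the domains \(T_i\) there need not have large projections, so "only domains with large projections" does not apply. Instead, discard the inessential \(T_i\); this is harmless because any essential \(T\sqsubseteq T_i\) forces \(T_i\) to be essential. Partial realization, on the other hand, needs no tuple extension, since it is inherited by plain restriction.
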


\begin{lemma}\label{Lem:FiniteIndexHHG}
Let \((G,\frS)\) be an HHG, and \(H\le G\) a finite-index subgroup.
Then the restriction of the \(G\)-action to \(H\) defines an HHG structure on \(H\), which we also denote by \((H,\frS)\).
\end{lemma}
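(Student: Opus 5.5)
The plan is to keep the HHS $(\mathcal X,\frS)$ unchanged and only restrict the group. Since the HHG structure on $G$ is a homomorphism $G\to\Aut(\frS)$, precomposing with the inclusion $H\hookrightarrow G$ gives a homomorphism $H\to\Aut(\frS)$. The HHS axioms (hyperbolic spaces, projections, relations, consistency, realization) do not involve the group at all, so they hold verbatim. What remains is to verify the three group-theoretic conditions of \Cref{Def:HHG} for $H$: that $H$ is finitely generated, that the induced quasi-action $H\curvearrowright\mathcal X$ is proper and cobounded, and that $H\curvearrowright\frS$ has finitely many orbits.

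For finite generation, I would use that a finite-index subgroup of a finitely generated group is finitely generated (Schreier's lemma). For properness, the restriction of a proper quasi-action to a subgroup is proper, because the defining finiteness condition (for each $R$, only finitely many elements move a given point by at most $R$, up to the uniform constants) passes to subsets of $G$. For coboundedness, I would write $G=\bigsqcup_{i=1}^n g_iH$, or equivalently $G=\bigcup_i Hg_i$ using right cosets. If $G\cdot x_0$ is $R$-dense, then every point of $\mathcal X$ is within $R$ of some $hg_ix_0$. Since the quasi-action by each $h$ is a uniform quasi-isometry, $hg_ix_0$ lies within a uniform constant of $h x_0$ plus $\max_i d(g_ix_0,x_0)$. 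So $H\cdot x_0$ is $R'$-dense for an explicit $R'$.

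For finiteness of orbits on $\frS$, each $G$-orbit $G\cdot U$ equals $\bigcup_i Hg_i\cdot U$, so it splits into at most $n$ $H$-orbits. Since there are finitely many $G$-orbits, there are finitely many $H$-orbits.

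Finally, to match the convention $\mathcal X=H$ with $H$ acting by left multiplication, I would pull back along the orbit map $H\to\mathcal X$. This map is a quasi-isometry by the Švarc–Milnor lemma, using properness and coboundedness. Equivariance is inherited from that of $G$ by restriction. None of these steps is a serious obstacle; the only point needing care is that the quasi-isometry constants in the coboundedness step are uniform. This follows from the uniformity built into the definition of an automorphism of $\frS$.
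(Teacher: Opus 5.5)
Your proposal is correct and follows the same route as the paper: you restrict the homomorphism \(G\to\Aut(\frS)\) to \(H\), note that properness and coboundedness survive passage to a finite-index subgroup, and observe that each \(G\)-orbit in \(\frS\) splits into at most \([G:H]\) many \(H\)-orbits. Your extra details (Schreier's lemma for finite generation, the explicit coset argument for coboundedness, and re-pulling back along an orbit map) spell out what the paper leaves implicit. In the paper's setting \(\mathcal X=G\) already carries the left-multiplication action, so the coboundedness step is even simpler: there \(d(hg_i,h)=|g_i|\).
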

\begin{proof}
The restricted action \(H\curvearrowright G\) is proper and cobounded, since \(H\) has finite index in \(G\). Moreover, each \(G\)-orbit in \(\frS\) splits into finitely many \(H\)-orbits, so the action \(H\curvearrowright\frS\) is cofinite. Hence the restricted action defines an HHG structure on \(H\).
\end{proof}

Unlike restriction to a finite-index subgroup, an HHG structure on \(H\) does not canonically determine a chosen HHG structure on a finite extension \(G\). We therefore use the following terminology.

\begin{definition}\label{Def:VirtualHHGStructure}
A \emph{virtual HHG structure} on a group \(G\) is an HHG structure \((H,\frS)\) on a finite-index subgroup \(H\le G\). If \(\mathscr C\) is a class of HHG structures, we say that \(G\) is \emph{virtually in \(\mathscr C\)} if \(G\) admits a virtual HHG structure \((H,\frS)\) with \((H,\frS)\in\mathscr C\).
\end{definition}

For the remainder of this subsection, we fix an HHG \((G,\frS)\) and recall the dynamics of individual elements. For \(g\in G\), fix \(x\in G\) and define the \emph{bigset} of \(g\) by
\[\B(g)=\left\{U\in\frS \mid\diam_{\calC U}\bigl(\pi_U(\langle g\rangle x)\bigr)=\infty\right\}.\]
This definition is independent of the choice of \(x\). Occasionally, we use a subscript, as in \(\B_G(g)\), to emphasize that the bigset is computed with respect to the HHG structure \((G,\frS)\).
The elements of \(\B(g)\) are called the \emph{active domains} of \(g\).
We say that \(g\) is \emph{elliptic} if its orbits in \(G\) are bounded, and \emph{axial} if the orbit map $n\mapsto g^n x$ is a quasi-isometric embedding for some \(x\in G\). 
The results of \cite[Proposition~6.4 and Lemma~6.7]{DHS17}, \cite[Theorem~3.1]{DHS20}, and \cite[Lemma~1.21]{AB23} give the axial--elliptic dichotomy for elements of an HHG and describe their behavior on their active domains. We use the following precise formulation, recorded in \cite[Proposition~2.19]{OP25}.

\begin{proposition}[Axial--elliptic dichotomy]\label{Prop:AxialEllipticDichotomy}
An element \(g\in G\) is elliptic if and only if \(\B(g)=\varnothing\), if and only if \(g\) has finite order.

If \(g\) has infinite order, then \(g\) is axial. Moreover,
\begin{itemize}
\item \(g\) permutes \(\B(g)\), which is a nonempty finite pairwise orthogonal subset of \(\frS\);
\item there exists \(M=M(\frS)>0\) such that, for every \(U\in\B(g)\), the element \(g^M\) fixes \(U\) and acts loxodromically on \(\calC U\).
\end{itemize}
\end{proposition}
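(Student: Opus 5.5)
The plan is to assemble the dichotomy from the cited ingredients, in the formulation of \cite[Proposition~2.19]{OP25}, using the equivariant HHG structure of \Cref{Def:HHG}. The first step is that a finite-order element is elliptic: its orbits are finite, hence bounded. Conversely, an elliptic \(g\) has \(\B(g)=\varnothing\), since \(\pi_U\) is \((E,E)\)-coarsely Lipschitz and so bounded orbits project to bounded sets in every \(\calC U\).

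The second step is the substantive reverse direction: if \(\B(g)=\varnothing\), then \(g\) has finite order. Here I would invoke the classification of \cite[Theorem~3.1]{DHS20}, which says that every element of an HHG is either elliptic or has a nonempty bigset. The idea behind it is that if every \(\pi_U(\langle g\rangle x)\) is bounded, one uses finite complexity and the distance formula to show the whole orbit is bounded. Elliptic elements then have finite order, because the action of \(G\) on itself is proper: a bounded orbit \(\langle g\rangle x\) in the word metric is a finite set, so \(g^n x=x\) for some \(n>0\), which forces \(g^n=1\). This closes the chain of equivalences. I expect this step to be the main obstacle. The difficulty is ruling out orbits that are unbounded in \(G\) yet bounded in every single coordinate, where the growth would be spread over infinitely many domains. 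This is exactly what \cite{DHS20} handles using the bounded number of large projections. I would cite it rather than reprove it.

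For the infinite-order case, \(\B(g)\neq\varnothing\). The fact that \(g\) is axial, with orbit map a quasi-isometric embedding, is \cite[Theorem~3.1]{DHS20}, together with \cite[Proposition~6.4]{DHS17}. To see that \(g\) permutes \(\B(g)\), use equivariance: \(\pi_{gU}(\langle g\rangle x)=g\pi_U(\langle g\rangle x)\), so \(gU\) is active whenever \(U\) is.

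The remaining properties of \(\B(g)\) come from \cite[Lemma~6.7]{DHS17}. Finiteness holds because infinitely many active domains would contradict finite complexity and the bound on large projections in the distance formula. Pairwise orthogonality comes from consistency: if \(U\pitchfork V\) or \(U\sqsubsetneq V\) were both active, the orbit would have to stay near \(\rho^V_U\) or \(\rho^U_V\), contradicting unboundedness in both.

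Finally, the uniform \(M\) is obtained as follows. Since \(\B(g)\) is pairwise orthogonal, \(|\B(g)|\) is bounded by a constant depending only on \(\frS\), namely the complexity bound on pairwise orthogonal families. Hence some power \(g^{k}\), with \(k\le |\B(g)|!\), fixes each \(U\in\B(g)\). The isometry \(g^k\) of the hyperbolic space \(\calC U\) then has an unbounded orbit. By \cite[Lemma~1.21]{AB23}, a uniform further power makes it loxodromic rather than parabolic. Taking \(M\) to be the product of these uniform bounds gives a constant depending only on \(\frS\).
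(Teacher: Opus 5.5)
Your overall route is the paper's. The paper gives no independent argument: it records this statement from \cite[Proposition~2.19]{OP25}, assembled from \cite[Proposition~6.4 and Lemma~6.7]{DHS17}, \cite[Theorem~3.1]{DHS20} and \cite[Lemma~1.21]{AB23}, which are exactly the sources you invoke. The elementary steps you supply yourself are correct: finite order implies elliptic, coarse Lipschitzness of \(\pi_U\) gives \(\B(g)=\varnothing\) for elliptic \(g\), a bounded orbit in the word metric is finite so freeness of left multiplication forces \(g^n=1\), and equivariance shows \(g\) permutes \(\B(g)\). However, two of your glosses on what the cited results do are wrong and should not be offered as justifications.

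First, pairwise orthogonality does not follow from consistency along the orbit in the way you describe. The consistency inequality constrains each orbit point separately. A priori, some orbit points could be far from \(\rho^V_U\) in \(\calC U\) (hence near \(\rho^U_V\) in \(\calC V\)) while others do the reverse, with both projections unbounded, so there is no contradiction yet. The mechanism that works is to pass to a power \(h=g^k\) fixing both domains. Equivariance then gives \(h\rho^V_U=\rho^V_U\) when \(U\pitchfork V\) (respectively \(h\rho^U_V=\rho^U_V\) when \(U\sqsubsetneq V\)), so \(h\) has bounded orbits in \(\calC U\) (respectively \(\calC V\)). For a domain \(W\) fixed by \(h\) one has \(\pi_W(\langle g\rangle x)=\bigcup_{0\le r<k}\langle h\rangle\pi_W(g^rx)\), so this contradicts activity. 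That argument needs to know first that active domains have finite \(g\)-orbits, which is part of what the cited lemma provides; the citation, not your heuristic, carries this step.

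Second, ``a uniform further power makes it loxodromic rather than parabolic'' cannot be the mechanism. Nonzero powers of a parabolic isometry are parabolic, since stable translation length scales by \(|n|\) and unbounded orbits remain unbounded. What the cited results give is that, in an HHG, an element fixing \(U\) with unbounded orbits in \(\calC U\) is already loxodromic there. So no extra power is needed, and one may take \(M=c!\), where \(c\) bounds the size of pairwise orthogonal families. Your bound \(k\le|\B(g)|!\) is not by itself uniform. What yields a single exponent is that the order of the permutation of \(\B(g)\) divides \(|\B(g)|!\), which divides \(c!\).
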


We will also use the following existence result, which produces an axial element whose unique active domain is the maximal one.

\begin{proposition}[{\cite[Theorem~9.14]{DHS17}}]\label{Lem:ExistenceOfAxialElement}
If $S$ is the $\sqsubseteq$-maximal domain and \(\diam(\calC S)=\infty\), then \(G\) contains an axial element $g$ with $\B(g)=\{S\}$.
\end{proposition}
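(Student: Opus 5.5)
The statement is \Cref{Lem:ExistenceOfAxialElement}: if \(\diam(\calC S)=\infty\), then \(G\) contains an axial element \(g\) with \(\B(g)=\{S\}\). The plan is to find a loxodromic element for the action on \(\calC S\) and then show that its bigset contains only \(S\).

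\emph{Step 1: a loxodromic on \(\calC S\).} The action \(G\curvearrowright\calC S\) is by isometries, since the maximal domain \(S\) is \(G\)-invariant. It is also coarsely cobounded, because \(\pi_S\) is coarsely Lipschitz, coarsely surjective and \(G\)-equivariant, and \(G\) acts coboundedly on itself. A cobounded isometric action on an unbounded hyperbolic space cannot be elliptic or parabolic. Its limit set is the whole Gromov boundary, which has at least two points because \(\calC S\) is unbounded. By Gromov's classification, the action is therefore of general type or lineal, so some \(g\in G\) acts loxodromically on \(\calC S\).

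\emph{Step 2: \(g\) is axial and \(S\in\B(g)\).} Since \(n\mapsto\pi_S(g^nx)\) is a quasi-geodesic in \(\calC S\), the orbit \(\pi_S(\langle g\rangle x)\) has infinite diameter, so \(S\in\B(g)\). Because \(\pi_S\) is coarsely Lipschitz, the orbit map \(n\mapsto g^nx\) is also a quasi-isometric embedding, so \(g\) is axial (consistently with \Cref{Prop:AxialEllipticDichotomy}).

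\emph{Step 3: no other active domains.} By \Cref{Prop:AxialEllipticDichotomy}, \(\B(g)\) is pairwise orthogonal. Every \(U\neq S\) is nested in \(S\), hence cannot be orthogonal to \(S\). Thus \(\B(g)=\{S\}\). This step is the one most likely to need care: I would check that the orthogonality axiom forbids \(U\sqsubsetneq S\) with \(U\bot S\). This follows from mutual exclusivity of the relations on distinct domains, since \(U\sqsubsetneq S\) already places \(U\) in the nesting relation with \(S\).

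The main obstacle is Step 1, the existence of a loxodromic. If citing Gromov's classification is not acceptable, I would argue directly. Coboundedness gives \(h\in G\) moving a basepoint of \(\calC S\) a large distance \(D\gg E\), and a standard ping-pong or "large displacement implies loxodromic" argument applies, for instance taking a product \(hkh'\) with suitably chosen \(k\) to avoid a parabolic configuration. The cleanest route is the classification: cobounded actions are never horocyclic because orbits are coarsely dense, so an unbounded action must contain a loxodromic.
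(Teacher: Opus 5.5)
The paper gives no argument of its own here; it simply cites \cite[Theorem~9.14]{DHS17}. Your Steps 2 and 3 are fine. Equivariance and coarse Lipschitzness of \(\pi_S\) show that an element \(g\) acting loxodromically on \(\calC S\) is axial with \(S\in\B(g)\). Since every \(U\neq S\) is nested in \(S\), mutual exclusivity of \(\sqsubseteq\) and \(\bot\), together with pairwise orthogonality of \(\B(g)\), gives \(\B(g)=\{S\}\).

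So everything rests on Step 1, and there your justification does not work as written. The inference ``the Gromov boundary has at least two points because \(\calC S\) is unbounded'' is false in general. A geodesic ray is unbounded with a single boundary point. A tree obtained by wedging segments of lengths \(1,2,3,\dots\) at one point is unbounded with empty boundary. The case you actually need to exclude is the horocyclic one, whose limit set is a single point. Knowing that a cobounded action has limit set equal to \(\partial\calC S\) excludes it only if you already know \(|\partial\calC S|\ge 2\), which is essentially what is being proved. Similarly, ``cobounded actions are never horocyclic because orbits are coarsely dense'' is a true statement but not an argument. The real reason is as follows. A horocyclic action fixes some \(\xi\in\partial\calC S\), and since it has no loxodromic elements its Busemann quasi-character at \(\xi\) is bounded, so each orbit has bounded Busemann height relative to \(\xi\). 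Points far along a quasi-geodesic ray to \(\xi\) then lie arbitrarily far from the orbit, contradicting coboundedness. Your ping-pong alternative with \(hkh'\) is too vague to fill this in.

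The cleanest repair in this setting avoids Gromov's classification entirely. The action \(G\curvearrowright\calC S\) is acylindrical \cite[Corollary~14.4]{BHS17I}, and your coboundedness argument shows that its orbits are unbounded. Osin's trichotomy for acylindrical actions \cite[Theorem~1.1]{Osi16} then immediately yields a loxodromic element. This is exactly the combination the paper itself uses in \Cref{Lem:StronglyFullySupportedElements}\eqref{Item:Nonquasiline} and in \Cref{Appendix:OmegaRelativelyHyperbolic}. Alternatively, keep your route but supply the horosphere argument above, or cite the standard fact that a cobounded action on an unbounded hyperbolic space is not horocyclic. With either fix, your proof is complete.
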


We now recall the terminology and the main RAAG embedding theorem from \cite{OP25} that will be used throughout the paper.

\begin{definition}[Supported axial elements and geometric irredundance]\label{Def:FSE}
An axial element \(g\in G\) is \emph{fully supported on} an unbounded domain \(U\in\frS\) if \(\B(g)=\{U\}\) and, for every unbounded domain \(V\bot U\), the element \(g\) fixes \(V\) and the induced action of \(\langle g\rangle\) on \(\calC V\) has uniformly bounded orbits.
If, in addition, the action of \(g\) on \(\calC V\) is trivial, then \(g\) is said to be \emph{rigidly fully supported on \(U\)}.

A finite collection of axial elements \(g_1,\dots,g_m\) is \emph{geometrically irredundant} if, for any distinct \(g_i\) and \(g_j\), either \(\B(g_i)\neq\B(g_j)\), or there exists \(U\in\B(g_i)\cap\B(g_j)\) such that the limit sets of \(g_i\) and \(g_j\) in \(\partial\calC U\) are disjoint.
\end{definition}

\begin{proposition}[{\cite[Theorem~3.9]{OP25}}]\label{Prop:SupportedAxialRAAGs}
Let \(g_1,\dots,g_m\in G\) be a geometrically irredundant collection of axial elements. Suppose that \(g_i\) is fully supported on an unbounded domain \(U_i\in\frS\) for each \(i\).
Let \(\Gamma\) be the graph with vertex set \(\{v_1,\dots,v_m\}\), where \(v_i\) and \(v_j\) are adjacent if and only if \(U_i\bot U_j\).

Suppose that there exists a positive integer \(k_1\) such that \([g_i^{k_1},g_j^{k_1}]=1\) whenever \(U_i\bot U_j\). Then there exists \(D>0\) such that, for every integer \(d\geq D\), the assignment \(v_i\mapsto g_i^{dk_1}\) extends to an injective homomorphism \(\bbA(\Gamma)\rightarrow G\). Equivalently, \(\langle g_1^{dk_1},\dots,g_m^{dk_1}\rangle\cong\bbA(\Gamma)\).

Suppose, in addition, that there exist a positive integer \(k_2\) and a constant \(Q\geq 0\) such that, for every \(i\), every element of the subgroup \(\langle g_j^{k_2} \mid U_j\bot U_i\rangle\) acts on \(\calC U_i\) with orbits of diameter at most \(Q\).
Then, setting \(k=\operatorname{lcm}(k_1,k_2)\) and increasing \(D\) if necessary, the homomorphism defined by \(v_i\mapsto g_i^{dk}\) is a quasi-isometric embedding for every \(d\geq D\).
\end{proposition}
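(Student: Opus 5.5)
The plan is a ping-pong argument on the hierarchy, in the spirit of Clay--Leininger--Mangahas and Koberda for mapping class groups. Here the sets being bounced between are defined through projections to the coordinate spaces \(\calC(hU_i)\), \(h\in G\), rather than through subsurface projections.

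\textbf{Step 1: relations and setup.} By hypothesis, \(g_i^{k_1}\) and \(g_j^{k_1}\) commute whenever \(U_i\bot U_j\). Hence \(v_i\mapsto g_i^{dk_1}\) extends to a homomorphism \(\bbA(\Gamma)\to G\) for every \(d\). It remains to show that it is injective for \(d\) large.

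By \Cref{Prop:AxialEllipticDichotomy}, after replacing \(k_1\) by a multiple we may assume:
\begin{itemize}
\item each \(g_i^{k_1}\) fixes \(U_i\) and acts loxodromically on \(\calC U_i\);
\item by full support, for every unbounded \(V\bot U_i\), the group \(\langle g_i\rangle\) has orbits of uniformly bounded diameter on \(\calC V\).
\end{itemize}

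Fix a basepoint \(x\in G\). Let \(w\) be a reduced word in the \(v_i\), written in a normal form in which adjacent commuting syllables are grouped, say \(w=s_1\cdots s_n\) with \(s_\ell=v_{i_\ell}^{e_\ell}\). Set \(h_\ell=\phi(s_1\cdots s_{\ell-1})\) and \(W_\ell=h_\ell U_{i_\ell}\).

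\textbf{Step 2: the inductive claim.} I would prove by induction on \(n\) that the projection of \(\phi(w)x\) to \(\calC W_n\) lies at distance at least \(d\tau-C\) from \(\rho\)-data coming from earlier domains. Here \(\tau>0\) is a lower bound on the stable translation lengths, and \(C\) is uniform. Consequently \(\phi(w)x\ne x\) once \(d\) is large.

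The inductive step splits according to how \(W_{\ell}\) and \(W_{\ell+1}\) are related. Because the word is reduced, the consecutive non-commuting syllables correspond to non-orthogonal domains.

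\emph{Case \(W_\ell=W_{\ell+1}\).} Geometric irredundance gives disjoint limit sets in \(\partial\calC U\). Classical ping-pong in the hyperbolic space \(\calC U\) with high powers then applies.

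\emph{Case \(W_\ell\pitchfork W_{\ell+1}\), or proper nesting.} A large displacement in \(\calC W_{\ell+1}\), measured from \(\rho^{W_\ell}_{W_{\ell+1}}\), forces the point \(\phi(s_1\cdots s_{\ell+1})x\) to have \(W_\ell\)-coordinate coarsely at \(\rho^{W_{\ell+1}}_{W_\ell}\) by consistency (\Cref{Def:HHS}\eqref{Item:Consistency}). This is a Behrstock-type inequality. In the nested case I would additionally use bounded geodesic image together with the fact that \(g_j\) fixes \(\rho^{U_i}_{U_j}\) coarsely. This fact follows from full support, since \(g_j\) fixes \(U_j\) and \(\B(g_j)=\{U_j\}\).

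\emph{Commuting blocks.} Syllables lying in a commuting block act boundedly on each other's domains, by full support. Such blocks therefore do not destroy the estimate.

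\textbf{Main obstacle.} I expect the main difficulty to be the bookkeeping in Step 2. The "previous relevant domain" for \(W_{n}\) must be chosen correctly when a commuting block intervenes, and the nested case needs care because \(\rho^{W_{\ell+1}}_{W_\ell}\) is then a map rather than a point. What I would try first is to choose, for each \(\ell\), the last earlier syllable not commuting with \(s_\ell\) in the reduced word, and to run the Behrstock inequality along that chain. This is modeled on the Koberda/CLM argument.

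\textbf{Step 3: quasi-isometric embedding.} For the second assertion I would use the distance formula. The ping-pong already shows that each syllable \(s_\ell\) yields a projection of size at least \(|e_\ell|dk\,\tau-C\) in \(\calC W_\ell\), and these domains are pairwise distinct. The extra hypothesis is needed here: the subgroup generated by the \(g_j^{k_2}\) with \(U_j\bot U_i\) has uniformly bounded orbits on \(\calC U_i\). This ensures that rearranging commuting syllables, that is, changing the choice of \(h_\ell\) within a coset of such a subgroup, moves \(W_\ell\)-coordinates only by a bounded amount, so the estimates are canonical. Summing over \(\ell\) with the distance formula gives
\[
\dist(x,\phi(w)x)\succeq \sum_\ell |e_\ell|,
\]
which is comparable to the word length of \(w\). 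The reverse inequality is automatic for a homomorphism between finitely generated groups. Combining these, \(\phi\) is a quasi-isometric embedding for \(d\ge D\) after replacing \(k_1\) by \(k=\operatorname{lcm}(k_1,k_2)\).
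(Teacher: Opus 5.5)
The paper gives no proof of this statement; it quotes it from \cite[Theorem~3.9]{OP25}. Your sketch therefore has to stand on its own, and it does not. The inductive claim of Step~2 is never formulated precisely. Moreover, the two places where ping-pong in an HHS is genuinely harder than in a single hyperbolic space are handled by assertions that fail.

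The first is nesting. For \(V\sqsubsetneq W\), the consistency inequality gives no Behrstock-type pinning, because \(\rho^W_V\) is a map rather than a set. The fact you invoke to compensate is wrong as stated. If \(U_i\sqsubsetneq U_j\), then \(g_j\) is loxodromic on \(\calC U_j\), so \(g_j^n\rho^{U_i}_{U_j}=\rho^{g_j^nU_i}_{U_j}\) leaves every bounded set; that is precisely the displacement your ping-pong uses. What you presumably need is that the element with the smaller support, \(g_i\), coarsely fixes \(\rho^{U_i}_{U_j}\). That requires \(g_iU_j=U_j\), and full support says nothing about domains containing \(U_i\): it only constrains domains orthogonal to \(U_i\). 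Even granting \(g_iU_j=U_j\), \(g_i\) acts on \(\calC U_j\) with bounded but not uniformly bounded orbits, like a rotation about \(\rho^{U_i}_{U_j}\). So a \(v_i\)-syllable does not preserve \(U_j\)-coordinates. Whether the \(U_j\)-displacement of a word such as \(v_j^av_i^bv_j^c\) survives has to be argued through bounded geodesic image, with \(D\) adapted to the configuration. That argument is what is missing.

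The second is commuting syllables. Full support gives uniformly bounded orbits on \(\calC U_s\) for each cyclic group \(\langle g_t\rangle\) with \(U_t\bot U_s\) separately. It does not bound the subgroup these elements generate. In your chain scheme, a syllable can be preceded by arbitrarily many syllables that commute with it but not with each other.

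For example, let \(\Gamma\) be the \(4\)-cycle with edges \(\{a,c\},\{c,b\},\{b,d\},\{d,a\}\), and take \(w=(ab)^m(cd)^m\). The first \(c\)-syllable has no earlier non-commuting syllable, so its chain starts there. Its domain is \(\phi((ab)^m)U_c=U_c\). Both the base of the chain and the subsequent Behrstock step require comparing \(\pi_{U_c}(x)\) with data moved by \(\phi((ab)^m)\), for instance \(\pi_{U_c}(\phi((ab)^m)x)=\phi((ab)^m)\pi_{U_c}(x)\). This is a \(2m\)-fold product acting on \(\calC U_c\), and full support alone does not bound it. The other ordering \((cd)^m(ab)^m\) has the symmetric problem. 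So "such blocks do not destroy the estimate" is unjustified.

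This is exactly the control supplied by the extra hypothesis on \(\langle g_j^{k_2}\mid U_j\bot U_i\rangle\), which you invoke in Step~3 for the same rearrangement issue. As written, your injectivity argument tacitly uses a hypothesis that is available only for the second assertion. A correct argument must either extract this control from the commutation relations or choose witness domains that avoid it.

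Step~3 also claims that the domains \(W_\ell\) are pairwise distinct, and this is false. Geometric irredundance allows \(U_i=U_j\), and then every syllable of a word in \(v_i,v_j\) has the same domain. The same happens for \(v_jv_iv_j\) when \(U_i\sqsubsetneq U_j\) and \(g_iU_j=U_j\). So the lower bound \(\sum_\ell|e_\ell|\) must come from a Schottky-type estimate inside a single \(\calC U\), not from summing distinct terms in the distance formula.

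A minor point: replacing \(k_1\) by a multiple in Step~1 changes the statement, which concerns every exponent \(dk_1\) with \(d\ge D\). The replacement is also unnecessary, since \(\B(g_i)=\{U_i\}\) already forces \(g_i\) to fix \(U_i\) and act loxodromically on \(\calC U_i\).
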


\begin{remark}\label{Rem:RigidSupportAndOPTheorem}
If each \(g_i\) is rigidly fully supported, then the bounded-orbit hypothesis in \Cref{Prop:SupportedAxialRAAGs} holds with \(k_2=1\) and \(Q=0\).
Moreover, by \cite[Proposition~6.11]{OP25}, for every pair \(U_i\bot U_j\), suitable common positive powers of \(g_i\) and \(g_j\) commute. Since the collection is finite, a single positive integer \(k_1\) may be chosen for all orthogonal pairs. Thus \Cref{Prop:SupportedAxialRAAGs} recovers \cite[Theorem~6.10]{OP25}.
\end{remark}

\subsection{Right-angled Artin groups and extension graphs}\label{Subsection:RAAG}

A \emph{right-angled Artin group (RAAG)} is the group associated to a finite simplicial graph \(\Gamma\), defined by
\[\bbA(\Gamma)=\langle v\in V(\Gamma)\,\mid\, [v,w]=1 \text{ whenever } \{v,w\}\in E(\Gamma)\rangle.\]
The graph \(\Gamma\) is called the \emph{defining graph} of \(\bbA(\Gamma)\).

\begin{definition}\label{def:extension graph}
\begin{enumerate}
\item For a finite (simplicial) graph \(\Gamma\), its \emph{extension graph}, denoted \(\Gamma^e\), is the graph whose vertex set is \(V(\Gamma^e)=\{v^g=gvg^{-1}\mid v\in V(\Gamma),\ g\in \bbA(\Gamma)\}\), where two vertices are adjacent if and only if they commute in \(\bbA(\Gamma)\).

\item For a (possibly infinite) graph \(\Lambda\), its \emph{clique graph}, denoted \(\Lambda_k\), is the graph whose vertices are the cliques of \(\Lambda\), with two vertices adjacent if and only if the corresponding cliques are contained in a common clique of \(\Lambda\).
\end{enumerate}
\end{definition}

We first note the following elementary rigidity property of vertices of the extension graph, which will be used repeatedly below.

\begin{lemma}[Power rigidity for extension-graph vertices]\label{Lem:ExtensionGraphPowerRigidity}
Let \(\bbA(\Gamma)\) be a RAAG, and let \(a\) and \(b\) be conjugates of standard generators. Then the following hold.
\begin{enumerate}
\item\label{Item:CommonPowersExtensionVertices}
If \(a\) and \(b\) have nonzero powers in common, then \(a=b\). Equivalently, they determine the same vertex of \(\Gamma^e\).
\item\label{Item:CommutingPowersExtensionVertices}
If some nonzero powers of \(a\) and \(b\) commute, then \(a\) and \(b\) commute. Equivalently, if \([a^p,b^q]=1\) for some nonzero integers \(p,q\), then \([a,b]=1\).
\end{enumerate}
\end{lemma}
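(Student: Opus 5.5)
Both parts reduce to standard facts about centralizers and roots in RAAGs. The plan is to conjugate so that one of the elements is a standard generator, and then use the structure of centralizers of cyclically reduced elements (Servatius' centralizer theorem).

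Write \(a=gug^{-1}\) and \(b=hwh^{-1}\) with \(u,w\in V(\Gamma)\). Conjugating both by \(g^{-1}\) changes neither hypothesis nor conclusion, so we may assume \(a=u\) is a standard generator and \(b=hwh^{-1}\).

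For \eqref{Item:CommonPowersExtensionVertices}, suppose \(u^p=b^q\) with \(p,q\neq 0\). Consider the abelianization \(\bbA(\Gamma)\to\Z^{V(\Gamma)}\). The image of \(u^p\) is \(p e_u\), and the image of \(b^q\) is \(q e_w\). Hence \(u=w\) and \(p=q\).

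Now \(u^p=hu^ph^{-1}\), so \(h\) centralizes \(u^p\). By Servatius' centralizer theorem, the centralizer of \(u^p\) equals the centralizer of \(u\), namely \(\langle u\rangle\times\bbA(\mathrm{lk}(u))\). Indeed, \(u^p\) is cyclically reduced with support \(\{u\}\), and its root is \(u\). It follows that \(h\) commutes with \(u\), so \(b=huh^{-1}=u=a\).

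For \eqref{Item:CommutingPowersExtensionVertices}, suppose \([u^p,b^q]=1\). Then \(b^q\) lies in \(C(u^p)=C(u)=\langle u\rangle\times\bbA(\mathrm{lk}(u))\). We must deduce that \(b\) itself lies in this subgroup. The key fact needed is that \(C(u)\) is closed under taking roots: if \(x^q\in C(u)\) then \(x\in C(u)\).

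To prove this, use the retraction \(\bbA(\Gamma)\to\bbA(\mathrm{st}(u))\) that kills the vertices outside the star. That alone does not suffice, so instead argue with cyclic reduction. Write \(x=yzy^{-1}\) with \(z\) cyclically reduced, so that \(x^q=yz^qy^{-1}\) with \(z^q\) cyclically reduced. Membership of \(x^q\) in the parabolic subgroup \(\bbA(\mathrm{st}(u))\) should force \(y\) and \(z\) into it, since normal forms of elements of a special subgroup use only its letters. This uses that \(\mathrm{supp}(z^q)=\mathrm{supp}(z)\) and that the prefix \(y\) cannot cancel.

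Alternatively, use the standard fact that roots are unique in RAAGs. By Servatius, \(b\) commutes with \(b^q\in C(u^p)\). Combined with the fact that RAAGs are bi-orderable, roots are unique, and this gives commutation: \(u^p\) and \(b\) satisfy \(b u^p b^{-1}\), whose \(q\)-th power is \(b\,u^{pq}\,b^{-1}\)\(=u^{pq}\) after using \([u^p,b^q]=1\) twice appropriately. More cleanly, \((bu^pb^{-1})^q\) and \(u^{pq}\) agree only when \(b\) commutes with \(b^q\), so I would rely on the following. In a bi-orderable group, \([x^p,y^q]=1\) with \(p,q\neq0\) implies \([x,y]=1\). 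This is a known lemma: from \(y^q x^p y^{-q}=x^p\) and uniqueness of roots, \(y^qxy^{-q}=x\), and symmetrically \(x\) commutes with \(y\). Bi-orderability of RAAGs is due to Duchamp–Krob.

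So the cleanest route for both parts is:
\begin{enumerate}
\item Abelianization shows \(u=w\) in part \eqref{Item:CommonPowersExtensionVertices}.
\item In a bi-orderable group, \(g x^p g^{-1}=x^p\) implies \(gxg^{-1}=x\), by uniqueness of roots. This finishes part \eqref{Item:CommonPowersExtensionVertices}.
\item Apply uniqueness of roots twice to finish part \eqref{Item:CommutingPowersExtensionVertices}.
\end{enumerate}
The main obstacle is justifying uniqueness of roots. I would cite bi-orderability of RAAGs, or equivalently Servatius' centralizer theorem, rather than reprove it.
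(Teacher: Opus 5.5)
Your final route is the paper's argument: abelianization forces the same generator and \(p=q\), and the unique root property finishes the proof, used once for part (1) and twice for part (2) (first \(b^qab^{-q}=a\), then \(aba^{-1}=b\)); you also justify unique roots via bi-orderability, where the paper simply cites it. Drop the intermediate detours from a written proof: the unfinished cyclic-reduction sketch, and the unjustified claim \((bu^pb^{-1})^q=u^{pq}\), which would require \(b\) to commute with \(u^{pq}\).
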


\begin{proof}
\noindent\eqref{Item:CommonPowersExtensionVertices}
Write \(a=wv_iw^{-1}\) and \(b=w'v_j(w')^{-1}\) for standard generators \(v_i,v_j\in V(\Gamma)\).
Suppose that \(a^p=b^q\) for some nonzero integers \(p,q\). After abelianizing \(\bbA(\Gamma)\), we obtain \(p[v_i]=q[v_j]\). Therefore \(i=j\) and \(p=q\). Replacing both exponents by their absolute values if necessary, we obtain \(a^{|p|}=b^{|p|}\). Since RAAGs have the unique root property, \(a=b\).

\smallskip

\noindent\eqref{Item:CommutingPowersExtensionVertices}
After replacing \(p\) and \(q\) by their absolute values, we may assume that \(p,q>0\). Since \([a^p,b^q]=1\), we have \((b^qab^{-q})^p=b^qa^pb^{-q}=a^p\).
The unique root property gives \(b^qab^{-q}=a\), so \(a\) commutes with \(b^q\). Hence \((aba^{-1})^q=ab^qa^{-1}=b^q\).
Applying the unique root property again gives \(aba^{-1}=b\). Thus \([a,b]=1\).
\end{proof}

The relevance of the extension graph to RAAG embeddings is given by the following fundamental results of Kim--Koberda.

\begin{theorem}[{\cite[Theorems~1.3 and~1.4]{KK13}}]\label{Thm:KK RAAG emb Thm}
Let \(\Lambda\) and \(\Gamma\) be finite simplicial graphs.
If \(\Lambda\le \Gamma^e\), then \(\bbA(\Lambda)\le \bbA(\Gamma)\). Conversely, if \(\bbA(\Lambda)\le \bbA(\Gamma)\), then \(\Lambda\le (\Gamma^e)_k\).
\end{theorem}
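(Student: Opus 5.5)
Since this statement is quoted from Kim--Koberda \cite{KK13}, the paper itself invokes it rather than proving it, and the plan below is only a sketch of how I would prove it. The two implications need different tools. For the forward direction (\(\Lambda\le\Gamma^e\Rightarrow\bbA(\Lambda)\le\bbA(\Gamma)\)), I would realise the vertices of a finite induced copy of \(\Lambda\) in \(\Gamma^e\) as conjugates \(a_1=w_1v_{i_1}w_1^{-1},\dots,a_m=w_mv_{i_m}w_m^{-1}\) of standard generators. I would then show that sufficiently large powers \(a_1^{d},\dots,a_m^{d}\) generate a copy of \(\bbA(\Lambda)\).

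To get the required ping-pong, I would equip \(\bbA(\Gamma)\) with a rich-family HHG structure containing all singleton subgraphs. In that structure each \(a_j\) is a rigidly fully supported axial element. Its unique active domain is the translate \(w_j\cdot U_{v_{i_j}}\), whose hyperbolic space is a quasi-line. Two distinct conjugates sharing a domain have different axes in that quasi-line; otherwise powers would coincide, contradicting \Cref{Lem:ExtensionGraphPowerRigidity}\eqref{Item:CommonPowersExtensionVertices}. So the collection is geometrically irredundant. I also need to check that orthogonality of the domains is exactly commutation of the \(a_j\); this uses \Cref{Lem:ExtensionGraphPowerRigidity}\eqref{Item:CommutingPowersExtensionVertices}. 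Given these, \Cref{Prop:SupportedAxialRAAGs} together with \Cref{Rem:RigidSupportAndOPTheorem} yields \(\langle a_j^{d}\rangle\cong\bbA(\Lambda)\) for large \(d\). Since \(\Lambda\) is an induced subgraph, the resulting orthogonality graph is exactly \(\Lambda\).

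For the converse, let \(\phi\colon\bbA(\Lambda)\to\bbA(\Gamma)\) be injective. The steps are as follows.
\begin{enumerate}
\item Write each \(\phi(v)\) as \(h_v c_v h_v^{-1}\) with \(c_v\) cyclically reduced.
\item Factor \(c_v=p_{v,1}\cdots p_{v,r_v}\) into pure factors, whose supports span the join decomposition of \(\operatorname{supp}(c_v)\).
\item Conjugate back to get pure elements \(q_{v,s}=h_vp_{v,s}h_v^{-1}\), which pairwise commute for fixed \(v\).
\item Using centraliser theory in RAAGs, show that for vertices \(u\neq v\), \(\phi(u)\) and \(\phi(v)\) commute if and only if every \(q_{u,s}\) commutes with every \(q_{v,t}\) (after passing to powers).
\end{enumerate}
Let \(\Delta\) be the commutation graph on the pure elements \(q_{v,s}\), identified up to common roots. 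Then \(v\mapsto\{q_{v,s}\}_s\) sends vertices of \(\Lambda\) to cliques of \(\Delta\), with adjacency in \(\Lambda\) equivalent to the union being a clique. Injectivity of \(\phi\) gives injectivity of this map on vertices: if \(u\neq v\) had the same clique, then suitable powers of \(\phi(u)\) and \(\phi(v)\) would coincide or commute in a way incompatible with \(\bbA(\Lambda)\). Hence \(\Lambda\le\Delta_k\).

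It remains to show \(\Delta_k\le(\Gamma^e)_k\). The natural route is to find an injective map from pure elements to cliques, or to single vertices, of \(\Gamma^e\) that preserves and reflects commutation. For pure elements supported on a single vertex, this is immediate from \Cref{Lem:ExtensionGraphPowerRigidity}. The main obstacle is pure elements with larger, non-join support. For these, I would first try to replace each such element by a high power and then exploit the action of \(\bbA(\Gamma)\) on \(\Gamma^e\), whose centraliser structure forces two pure elements of non-cyclic support to commute only when they share a common root. This should let each such element contribute an isolated vertex of \(\Delta\) that can be matched to a suitably chosen unused conjugate in \(\Gamma^e\) with the same commutation pattern. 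I expect this matching step to be the delicate part of the argument.
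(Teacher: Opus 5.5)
The paper does not prove this statement; it quotes it from \cite{KK13}, so your sketch has to stand on its own.

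\textbf{Forward direction.} This half is a valid alternative to Kim--Koberda's direct ping-pong. In effect it is the first half of the proof of \Cref{Thm:RankTwoCriterionHHG} combined with \Cref{Prop:RAAGExpandedCore}. Within this paper none of those ingredients depends on \Cref{Thm:KK RAAG emb Thm}, so there is no circularity, and you get undistortion for free. One step is misstated, however. Any two loxodromics on a quasi-line have the same limit set, so distinct conjugates sharing a domain could never be separated by ``different axes''. The correct point is that distinct vertex conjugates never share a domain at all. If they did, \Cref{Lem:StronglyFullySupportedElements}\eqref{Item:Quasiline} would give them a common power, contradicting \Cref{Lem:ExtensionGraphPowerRigidity}\eqref{Item:CommonPowersExtensionVertices}. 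Geometric irredundance then holds simply because the bigsets differ.

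\textbf{Converse: injectivity on vertices fails.} The map $v\mapsto\{q_{v,s}\}_s$ need not be injective. Take $\Lambda=\Gamma$ a single edge and set $\phi(u)=ab$, $\phi(v)=ab^2$; this is even an isomorphism $\mathbb Z^2\to\mathbb Z^2$. Both vertices go to the clique $\{a,b\}$ of $\Delta$. The fact that $\phi(u)$ and $\phi(v)$ commute is perfectly compatible with $\bbA(\Lambda)$, so your injectivity argument fails. Collisions can only occur between vertices with equal closed stars in $\Lambda$, since by the centraliser theorem the root set determines commutation. They must therefore be removed by an extra step. For example, precompose $\phi$ with automorphisms of $\bbA(\Lambda)$ generated by transvections inside twin classes, chosen so that twins acquire distinct root sets.

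\textbf{Converse: the step $\Delta\le\Gamma^e$ is missing.} This is the more serious gap, because this step carries the real content of the theorem. You leave it open, and the heuristic you propose for it is false. Pure elements of non-cyclic support can commute without a common root: in $\bbA(C_4)=F(a,c)\times F(b,d)$ the elements $ac$ and $bd$ commute. They also commute with vertex conjugates in the link of their support; for instance, $ac$ commutes with $b$. So they are not isolated in $\Delta$, and matching them to ``unused'' vertices of $\Gamma^e$ cannot work.

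What is needed is, for each non-cyclic pure factor $q=hph^{-1}$ with $p$ cyclically reduced, a single vertex of $\Gamma^e$ with the same commutation pattern relative to the finitely many other factors. A natural choice is $x_q=hp^{N}vp^{-N}h^{-1}$ with $v\in\operatorname{supp}(p)$ and $N\gg 0$. Commuting pairs stay commuting by the centraliser theorem. Distinctness, and non-commutation of the non-commuting pairs, still require a genuine normal-form or ping-pong argument in $N$. This is the RAAG counterpart of the far-translate construction in the claim inside the proof of \Cref{lem:intermediate RAAG}, where a non-minimal support $U_i$ is replaced by $g_i^{k_i}T_i$.
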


Although the clique-graph condition cannot be removed in general \cite{CDK13,LL18}, in certain cases the obstruction can be strengthened to one in the extension graph itself.

\begin{theorem}\label{Thm:EquivalenceforEmbedding}
Let \(\Lambda\) and \(\Gamma\) be finite simplicial graphs. Then
\[\bbA(\Lambda)\le \bbA(\Gamma)\qquad\Longleftrightarrow\qquad\Lambda\le \Gamma^e\]
provided that one of the following holds:
\begin{enumerate}
\item {\cite[Corollary~1.9 and Theorem~1.11]{KK13}} \(\Lambda\) is a forest or \(\Gamma\) is triangle-free;
\item {\cite{Kat18}} \(\Lambda\) is the complement of a linear forest.
\end{enumerate}
\end{theorem}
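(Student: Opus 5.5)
The plan is to assemble the equivalence from the cited results rather than prove it from scratch, separating the two directions.

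\emph{The implication \(\Lambda\le\Gamma^e\Rightarrow\bbA(\Lambda)\le\bbA(\Gamma)\).} This is the first half of \Cref{Thm:KK RAAG emb Thm} and needs no hypothesis on \(\Lambda\) or \(\Gamma\). For orientation, I would recall the Kim--Koberda argument. Given a finite induced copy of \(\Lambda\) in \(\Gamma^e\), whose vertices are conjugates \(a_1,\dots,a_m\) of standard generators, sufficiently high common powers \(a_i^N\) satisfy only the relations forced by commutation. This is a ping-pong or normal-form argument on \(\bbA(\Gamma)\), and \Cref{Lem:ExtensionGraphPowerRigidity} guarantees that passing to powers does not create new commutations.

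\emph{The implication \(\bbA(\Lambda)\le\bbA(\Gamma)\Rightarrow\Lambda\le\Gamma^e\).} This direction is where the hypotheses enter. Without them, \Cref{Thm:KK RAAG emb Thm} only gives \(\Lambda\le(\Gamma^e)_k\), and the task is to upgrade a clique-graph embedding to a genuine induced embedding in \(\Gamma^e\).
\begin{enumerate}
\item Case \(\Gamma\) triangle-free. Here \(\Gamma^e\) is again triangle-free, since commuting conjugates of generators span cliques conjugate into cliques of \(\Gamma\). Hence \((\Gamma^e)_k\) has only vertex-cliques and edge-cliques, and its adjacency is simple. I would then follow the Kim--Koberda refinement, as in \cite[Theorem~1.11]{KK13}: first replace the embedding by one in which each generator of \(\Lambda\) maps to a power of a single extension-graph vertex or to an element whose pure factors span an edge, and then show that edge-type images can be exchanged for single vertices without destroying the induced structure.
\item Case \(\Lambda\) a forest. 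I would use that every forest group embeds in \(\bbA(P_4)\) and that \(\bbA(\Lambda)\le\bbA(\Gamma)\) forces the relevant small configurations (\(P_3\), \(P_4\), or disjoint edges) to appear in \(\Gamma^e\). Forests then embed directly into these extension graphs by the tree-like structure of \(\Gamma^e\) \cite[Corollary~1.9]{KK13}.
\item Case \(\Lambda\) the complement of a linear forest. This is Katayama's theorem \cite{Kat18}, which I would cite verbatim.
\end{enumerate}

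The main obstacle is the upgrade from \((\Gamma^e)_k\) to \(\Gamma^e\) in the triangle-free case, where one must control how images of generators decompose into commuting pure factors. Since all of this is established in \cite{KK13,Kat18}, the proof in the paper should consist of citing these references.
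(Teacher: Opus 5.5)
Your proposal is correct and takes the same approach as the paper. The paper gives no independent argument: the implication \(\Lambda\le\Gamma^e\Rightarrow\bbA(\Lambda)\le\bbA(\Gamma)\) is the first half of \Cref{Thm:KK RAAG emb Thm}, and the converse under either hypothesis is quoted from \cite[Corollary~1.9 and Theorem~1.11]{KK13} and \cite{Kat18}, exactly as you conclude. Your sketches of the internal Kim--Koberda arguments are only heuristic and play no role, since the citations carry the proof.
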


We conclude with an elementary observation about passing to powers of standard generators. This will allow us both to replace a given RAAG embedding by one obtained from suitable powers of its standard generators and, when needed, to pass such an embedding to a finite-index subgroup of the ambient group. The construction also preserves quasi-isometric embeddedness.

\begin{lemma}\label{Lem:PowerEmbeddingRAAG}
Let \(\Gamma\) be a finite graph, and let \(n_v\) be a positive integer for each \(v\in V(\Gamma)\). Then the assignment \(v\mapsto v^{n_v}\) extends to an injective homomorphism \(\bbA(\Gamma)\rightarrow\bbA(\Gamma)\), which is a quasi-isometric embedding.

Consequently, if \(\phi\colon\bbA(\Gamma)\rightarrow G\) is injective and \(H\le G\) has finite index, then there exist positive integers \(n_v\) such that \(v\mapsto\phi(v)^{n_v}\) extends to an injective homomorphism \(\bbA(\Gamma)\rightarrow H\). If \(G\) is finitely generated and \(\phi\) is a quasi-isometric embedding, then this induced homomorphism into \(H\) is also a quasi-isometric embedding.
\end{lemma}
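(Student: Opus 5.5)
The plan is to treat the first assertion as a statement purely about \(\bbA(\Gamma)\), and then obtain the consequence by composing.

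\emph{First assertion.} Let \(\psi\colon\bbA(\Gamma)\to\bbA(\Gamma)\) be defined by \(v\mapsto v^{n_v}\). It is a homomorphism because commuting generators have commuting powers. For injectivity, I would realize \(\psi\) as a composition of two maps:
\begin{itemize}
\item an inclusion \(\bbA(\Gamma)\cong\langle v^{n_v}\rangle\le\bbA(\Gamma)\);
\item the standard identification of the image with a finite-index subgroup.
\end{itemize}
Concretely, consider the finite quotient
\[q\colon\bbA(\Gamma)\to\prod_{v}\Z/n_v\Z\]
and let \(K\) be the kernel of \(q\), which has finite index. The cleanest route is a normal form argument. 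A reduced word \(w\) in the generators \(v\) maps under \(\psi\) to the word obtained by replacing each syllable \(v^{e}\) with \(v^{n_ve}\). Syllable shuffles and cancellations in \(\bbA(\Gamma)\) depend only on which generators occur and on the commutation graph, not on the exponents. Hence a reduced word (in the sense of Hermiller--Meier / Green normal forms) maps to a reduced word, and nonzero exponents stay nonzero. Therefore nontrivial elements map to nontrivial elements.

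For the quasi-isometric embedding, the same normal form observation gives word length in the syllable sense: \(|\psi(w)|=\sum n_v|e|\), which lies between \(|w|\) and \(\max_v n_v\cdot|w|\). Since the word length of a reduced word is the sum of the absolute values of its exponents, \(\psi\) is bi-Lipschitz.

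\emph{Consequence.} Let \(H\le G\) have finite index. Replace \(H\) by its normal core \(N\), which is normal and of finite index \(m\). Then \(g^{m}\in N\subseteq H\) for every \(g\in G\), since the image of \(g\) in the finite group \(G/N\) has order dividing \(m\). Set \(n_v=m\) for all \(v\). Then \(\phi\circ\psi\) sends \(v\mapsto\phi(v)^{m}\in H\). Its image lies in \(H\) because it is generated by elements of \(H\), and it is injective as a composition of injective maps.

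For the quasi-isometry statement, \(\phi\circ\psi\colon\bbA(\Gamma)\to G\) is a composition of quasi-isometric embeddings. The inclusion \(H\hookrightarrow G\) is a quasi-isometry, since \(H\) has finite index in the finitely generated group \(G\), so \(H\) is finitely generated and undistorted. Hence the map into \(H\) is a quasi-isometric embedding.

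\emph{Main obstacle.} The only nontrivial step is justifying that exponent-scaling preserves reducedness. I would cite the standard normal form theorem for RAAGs: a word is reduced if and only if no two syllables in the same generator can be shuffled together. That criterion is visibly invariant under scaling exponents by nonzero integers.
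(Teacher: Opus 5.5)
Your proposal is correct and follows essentially the same route as the paper. The graph-product normal form shows that scaling syllable exponents preserves reducedness, which gives the bi-Lipschitz bound and injectivity; the consequences then follow by composing with \(\phi\) and using that the finite-index inclusion \(H\hookrightarrow G\) is a quasi-isometry. Your normal-core choice of a uniform exponent \(m\) is a harmless variant. The opening finite-quotient digression (the kernel \(K\) and the ``identification with a finite-index subgroup'') is unused and should be dropped, since \(\langle v^{n_v}\rangle\) is generally not of finite index; for example, \(\langle a^2,b^2\rangle\) has infinite index in a free group of rank two.
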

\begin{proof}
Equip \(\bbA(\Gamma)\) with the standard generating set \(V(\Gamma)\). Let \(g=v_1^{m_1}\cdots v_k^{m_k}\) be a reduced syllable expression for \(g\in\bbA(\Gamma)\), where \(m_i\neq 0\) for every \(i\). By the normal form for graph products \cite{Green90}, replacing each syllable by a nonzero power preserves reducedness. Hence, for the homomorphism \(\mu\colon\bbA(\Gamma)\rightarrow\bbA(\Gamma)\) defined by \(\mu(v)=v^{n_v}\) for \(v\in V(\Gamma)\), \(\mu(g)=v_1^{n_{v_1}m_1}\cdots v_k^{n_{v_k}m_k}\) is again reduced, and thus \(|\mu(g)|=\sum_{i=1}^k n_{v_i}|m_i|\). Since \(|g|=\sum_{i=1}^k |m_i|\), we obtain 
\[\left(\min_{v\in V(\Gamma)} n_v\right)|g|\leq|\mu(g)|\leq \left(\max_{v\in V(\Gamma)} n_v\right)|g|.\] Thus \(\mu\) is a quasi-isometric embedding, and in particular is injective.

The remaining assertions follow immediately by composing with the given homomorphism \(\phi\), together with the facts that suitable positive powers of finitely many elements lie in any prescribed finite-index subgroup and that the inclusion of a finite-index subgroup of a finitely generated group is a quasi-isometry.
\end{proof}

\subsection{Rich-family HHG structures on compact special groups}
\label{Subsection:FactorSystems}

We briefly recall the rich-family construction of hierarchically hyperbolic structures on compact special groups. We refer to \cite{BH} for background on \(\CAT(0)\) cube complexes.

Let \(X\) be a \(\CAT(0)\) cube complex. Two convex subcomplexes \(Y_1,Y_2\subseteq X\) are called \emph{parallel} if they are crossed by the same set of hyperplanes. A \emph{factor system} on \(X\) is a collection of convex subcomplexes satisfying certain projection and local-finiteness conditions, and the associated HHS structure has domains given by the parallelism classes of its elements; see \cite[Definition~8.1 and Remark~13.2]{BHS17I}. We will only use factor systems arising from the following construction.

Let \(\Gamma\) be a finite simplicial graph. A collection \(\mathcal R\) of subgraphs of \(\Gamma\) is called a \emph{rich family} if it contains \(\Gamma\) and the link of every vertex of \(\Gamma\), and is closed under intersections. By \cite[Definition~8.2 and Proposition~8.3]{BHS17I}, such a family determines an \(\bbA(\Gamma)\)-invariant factor system on the universal cover of the Salvetti complex \(S_\Gamma\), whose elements are represented by lifts of the sub-Salvetti complexes \(S_\Lambda\) with \(\Lambda\in\mathcal R\).

More generally, if \(X\) is a compact special cube complex, a local isometry \(X\to S_\Gamma\) lifts to a convex embedding \(\widetilde X\to\widetilde S_\Gamma\). Restricting a factor system arising from a rich family to \(\widetilde X\) gives a \(\pi_1(X)\)-invariant factor system and hence an HHG structure on \(\pi_1(X)\); see \cite[Lemma~8.5 and Corollary~8.9]{BHS17I}. We refer to the resulting HHG structures as \emph{rich-family HHG structures}.

\section{Structural assumptions for algebraic orthogonality}\label{Section:Orthogonality}

In this section, we introduce the structural assumptions used in our RAAG embedding arguments. We use the metric orthogonal stabilizers introduced by Abbott--Behrstock \cite{AB23}, but impose a strengthened \(\mathbf F_U\) stabilizers property requiring the induced nesting-factor action to define an HHG structure. Together with two element-level properties, this gives the class \(\Xi\). 
The metric orthogonal stabilizer is compared with the corresponding hierarchical orthogonal stabilizer in \Cref{Appendix:ComparisonWithAB}, where we explain how the action on bounded domains of the orthogonal factor can distinguish the two notions.

\subsection{Metric orthogonal stabilizers and strong support}\label{Subsection:FUStabilizers}

Let \((\mathcal X,\frS)\) be an HHS and let \(U\in\frS\). Associated to \(U\) is a standard product region \(\mathbf P_U\asymp\mathbf F_U\times\mathbf E_U\), where \((\mathbf F_U,\frS_U)\) is the nesting factor and \((\mathbf E_U,\frS_U^\perp)\) is the orthogonal factor. We briefly recall the relevant features of these factors; for more details, see \cite[\S5]{BHS19II}. The index set of the nesting factor is \(\frS_U=\{V\in\frS\mid V\sqsubseteq U\}\).

If no domain is orthogonal to \(U\), then \(\mathbf E_U\) is taken to be a point. Now suppose that there exists a domain orthogonal to \(U\). Then
\[\frS_U^\perp=
\begin{cases}
\{V\in\frS\mid V\bot U\}=\frS_{C_U} & \text{if \(U\) admits a clean container \(C_U\)},\\
\{V\in\frS\mid V\bot U\}\cup\{A_U\} & \text{otherwise},
\end{cases}\]
where, in the second case, \(A_U\) is chosen to be \(\sqsubseteq\)-minimal among the domains satisfying \(V\sqsubseteq A_U\) for every \(V\bot U\); such a domain exists by the container axiom and finite complexity. Note that \(C_U\) is unique when it exists.
In the first case, by \cite[Definition~3.1 and Lemma~3.6]{AB23}, the orthogonal factor \(\mathbf E_U\) is naturally isometric to the nesting factor \(\mathbf F_{C_U}\); we use the standard nesting-factor tuple model for \(\mathbf F_{C_U}\) as our model for \(\mathbf E_U\).
In the latter case, by the minimality of \(A_U\), we have \(A_U\pitchfork U\); see \cite[Remark~5.12]{BHS19II}. The domain \(A_U\) is adjoined only as the maximal domain of the HHS structure on \(\mathbf E_U\), and is not a coordinate of its underlying tuple space.

Recall that, by the realization theorem, \(\mathcal X\) may be viewed, up to uniformly bounded ambiguity, as the space of uniformly consistent tuples of coordinates.
We use the standard tuple models for these factors. Fix a sufficiently large consistency constant \(\kappa\). The underlying space \(\mathbf F_U\) consists of the \(\kappa\)-consistent partial tuples whose coordinates are indexed by the domains \(V\sqsubseteq U\).
In either of the two nontrivial cases above, the underlying space \(\mathbf E_U\) consists of the \(\kappa\)-consistent partial tuples whose coordinates are indexed by the domains orthogonal to \(U\).
The factors \(\mathbf F_U\) and \(\mathbf E_U\) carry natural HHS structures, and there is a coarsely defined product map \(\phi_U\colon\mathbf F_U\times\mathbf E_U\rightarrow\mathcal X\) whose image is the standard product region \(\mathbf P_U\); see \cite[Definitions~5.8--5.9, Construction~5.10, and Proposition~5.11]{BHS19II}. More precisely, if \(\mathbf a=(a_V)_{V\sqsubseteq U}\in\mathbf F_U\) and \(\mathbf b=(b_W)_{W\bot U}\in\mathbf E_U\), then the tuple used to define \(\phi_U(\mathbf a,\mathbf b)\) has \(V\)-coordinate \(a_V\) for \(V\sqsubseteq U\), and \(W\)-coordinate \(b_W\) for \(W\bot U\); the remaining coordinates are determined, up to uniform error, by the consistency relations.

We distinguish the abstract tuple spaces \(\mathbf F_U\) and \(\mathbf E_U\) from their chosen standard images in \(\mathcal X\). In particular, when we say that an automorphism acts as the identity on \(\mathbf E_U\), we mean that its induced action on the abstract orthogonal tuple factor is the identity. This does not mean that it fixes pointwise a chosen standard copy of \(\mathbf E_U\) inside \(\mathcal X\).

Now consider an HHG \((G,\frS)\). Since \(\Stab_G(U)\) preserves the nesting and orthogonal factors, its action induces corresponding actions on \(\mathbf F_U\) and \(\mathbf E_U\). In terms of the tuple model, if \(g\in\Stab_G(U)\) and \(\mathbf b=(b_V)_{V\bot U}\in\mathbf E_U\), then \((g\mathbf b)_{gV}=g^\diamond(V)b_V\) for every \(V\bot U\).


\begin{definition}[Metric orthogonal stabilizers]
\label{Def:OurGUandFU}
Following \cite[Definition~3.1]{AB23}, for each \(U\in\frS\), let \(G_U\leq\Stab_G(U)\) be the subgroup consisting of the elements that stabilize every \(\mathbf F_U\)-fiber \(\phi_U(\mathbf F_U\times\{e\})\) for \(e\in\mathbf E_U\).
Equivalently, \(G_U\) is the kernel of the induced action of \(\Stab_G(U)\) on the metric factor \(\mathbf E_U\). We call \(G_U\) the \emph{metric orthogonal stabilizer} associated to \(U\).

We say that \((G,\frS)\) satisfies the \emph{\(\mathbf F_U\) stabilizers property} if, for every \(U\in\frS\), the restricted action \(G_U\curvearrowright(\mathbf F_U,\frS_U)\) makes \(G_U\) an HHG with underlying HHS \((\mathbf F_U,\frS_U)\). We denote this HHG structure by \((G_U,\frS_U)\).
\end{definition}

\begin{definition}[Strongly fully supported elements]\label{Def:StronglyFullySupported}
Let \((G,\frS)\) be an HHG. An axial element \(g\in G\) is \emph{strongly fully supported on} an unbounded domain \(U\in\frS\) if \(g\in G_U\) and \(U\in\B(g)\).
\end{definition}

We shall use the following elementary consequence of coarse surjectivity, partial realization, and consistency. By increasing the fixed consistency constant \(\kappa\), if necessary, we may assume that, for every \(V\bot U\) and every \(p\in\calC V\), there exists \(\mathbf b=(b_W)_{W\bot U}\in\mathbf E_U\) such that \(b_V=\{p\}\).
More generally, if \(V_1,\dots,V_k\bot U\) are pairwise orthogonal, then the \(V_i\)-coordinates may be prescribed simultaneously.

Indeed, by coarse surjectivity, for each \(i\) choose \(p_i'\in\pi_{V_i}(\mathcal X)\) uniformly close to the prescribed point \(p_i\). 
Partial realization gives \(x\in\mathcal X\) whose \(V_i\)-coordinates are uniformly close to the points \(p_i'\), and hence to the points \(p_i\). Restrict the projection tuple of \(x\) to the domains orthogonal to \(U\), and replace the finitely many \(V_i\)-coordinates by the prescribed singletons \(\{p_i\}\). Since these replacements change the corresponding coordinates by a uniformly bounded amount, the resulting tuple remains \(\kappa\)-consistent after uniformly increasing \(\kappa\).

\begin{lemma}\label{Lem:StrongImpliesRigid}
For an HHG \((G,\frS)\), let \(U\in\frS\) and \(g\in G_U\). Then, for every unbounded
domain \(V\bot U\), \(g^\sharp(V)=V\) and \(g^\diamond(V)=\id_{\calC V}\).
Consequently, every axial element strongly fully supported on \(U\) is rigidly fully supported on \(U\), and hence fully supported on \(U\).
\end{lemma}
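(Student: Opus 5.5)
The plan is to use the tuple model of \(\mathbf E_U\) directly. Fix \(g\in G_U\) and an unbounded domain \(V\bot U\). Since \(g\in\Stab_G(U)\) and \(g\) preserves orthogonality, \(g^\sharp(V)\bot U\). The goal is to show, from the condition that \(g\) acts as the identity on the abstract tuple factor \(\mathbf E_U\), that first \(g^\sharp(V)=V\) and then \(g^\diamond(V)\) is trivial. Recall that for \(\mathbf b=(b_W)_{W\bot U}\in\mathbf E_U\) we have \((g\mathbf b)_{gW}=g^\diamond(W)b_W\), and that \(g\mathbf b=\mathbf b\) for all \(\mathbf b\).

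For \(g^\sharp(V)=V\), I would argue by contradiction. Suppose \(V':=g^\sharp(V)\neq V\). Since \(V\) is unbounded and \(g^\diamond(V)\) is an isometry, \(\calC V'\) is also unbounded. By the remark preceding the lemma, there is \(\mathbf b\in\mathbf E_U\) with \(b_V=\{p\}\) prescribed. The identity \(g\mathbf b=\mathbf b\) gives \(b_{V'}=g^\diamond(V)\{p\}\). Letting \(p\) vary over the unbounded space \(\calC V\), I would compare against a fixed tuple. Choose \(\mathbf b\) via prescribing \(b_V=\{p\}\). I would also like to control \(b_{V'}\) independently. If \(V\bot V'\), both coordinates can be prescribed simultaneously, which immediately contradicts \(b_{V'}=g^\diamond(V)b_V\) for suitable choices. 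If \(V\pitchfork V'\) or they are nested, consistency pins one coordinate near \(\rho\) whenever the other is far, and I would produce the contradiction from that. For instance, if \(V\pitchfork V'\), choose \(p\) far from \(\rho^{V'}_V\); consistency forces \(b_{V'}\) near \(\rho^V_{V'}\). Then \(g^\diamond(V)p\) lies within bounded distance of a fixed set for all such \(p\), which is impossible since \(g^\diamond(V)\) is an isometry and the admissible \(p\) form an unbounded set. The nested case is handled similarly using the nesting consistency inequality: choose \(p\) far from the relevant \(\rho\) in the larger domain, which forces \(b\) in the smaller domain to lie near a bounded set.

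Once \(g^\sharp(V)=V\), I apply the identity again: \(g^\diamond(V)b_V=b_V\) for every tuple. Since every singleton \(\{p\}\), \(p\in\calC V\), occurs as \(b_V\), we get \(g^\diamond(V)p=p\) for all \(p\), so \(g^\diamond(V)=\id_{\calC V}\). The main subtlety is whether "identity on \(\mathbf E_U\)" is meant exactly or only coarsely. If it is only up to uniform error, this argument gives an isometry moving points a bounded amount, which is not literally the identity. I would read the definition (as stated in the paper, via the abstract tuple factor) as an exact equality of tuples, which makes this step clean. I expect the nested and transverse cases in the previous step to be the only real work.

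The consequences then follow. If \(g\) is axial and strongly fully supported on \(U\), then \(U\in\B(g)\). For every unbounded \(V\bot U\), \(g\) fixes \(V\) and acts trivially on \(\calC V\), so \(V\notin\B(g)\). By \Cref{Prop:AxialEllipticDichotomy}, \(\B(g)\) is pairwise orthogonal and contains \(U\), so any other element of \(\B(g)\) would be an unbounded domain orthogonal to \(U\), which is excluded. Hence \(\B(g)=\{U\}\), and \(g\) is rigidly fully supported, and in particular fully supported, on \(U\).
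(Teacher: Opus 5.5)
Your orthogonal case, transverse case, the deduction \(g^\diamond(V)=\id_{\calC V}\), and the final bigset argument are all correct and match the paper's proof. The gap is the nested case. There the consistency inequality does \emph{not} confine the smaller coordinate to a fixed bounded set. Suppose \(V'=g^\sharp(V)\sqsubsetneq V\) and \(b_V=\{p\}\) is far from \(\rho^{V'}_V\). Consistency then only says that \(b_{V'}\) lies near \(\rho^{V}_{V'}(p)\). But the downward projection \(\rho^V_{V'}\colon\calC V\to 2^{\calC V'}\) is a genuine coarse map, and its value moves with \(p\). The case \(V\sqsubsetneq V'\) has the same problem: \(p\) is only pinned near \(\rho^{V'}_V\bigl(g^\diamond(V)p\bigr)\). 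So the relation \(b_{V'}=g^\diamond(V)b_V\) is not contradicted by the nesting consistency inequality alone, and the case is not handled ``similarly'' to the transverse one, where the target \(\rho^V_{V'}\) really is a fixed bounded set.

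The paper rules out nesting with a purely combinatorial argument that you missed. Since \(g\) preserves nesting, \(V\sqsubsetneq g^\sharp(V)\) would give the strictly increasing chain \(V\sqsubsetneq gV\sqsubsetneq g^2V\sqsubsetneq\cdots\). Likewise \(g^\sharp(V)\sqsubsetneq V\) would give an infinite strictly decreasing chain. Either contradicts finite complexity, so only the orthogonal and transverse cases remain.

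Your metric approach could alternatively be rescued with the bounded geodesic image axiom. Take a long geodesic segment in the larger of the two coordinate spaces that stays far from the relevant \(\rho\). Its downward projection has diameter at most \(E\). Consistency places the partner points of its two endpoints (obtained via \(g^\diamond(V)\) or its inverse) within bounded distance of that projection. Yet these partner points are at distance equal to the segment's length, since \(g^\diamond(V)\) is an isometry. That is a contradiction, but it uses an axiom you never invoke, and the finite-complexity argument is much simpler.
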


\begin{proof}
Let \(V\bot U\) be unbounded, and set \(W=g^\sharp(V)\). Since \(g\in\Stab_G(U)\), we have \(W\bot U\). Thus both \(V\) and \(W\) are coordinates of the tuple space \(\mathbf E_U\). Since \(g\in G_U\), its induced action on \(\mathbf E_U\) is the identity. Hence, for every \(\mathbf b=(b_T)_{T\bot U}\in\mathbf E_U\), we have
\[b_W=(g\mathbf b)_W=g^\diamond(V)b_V.
\tag{*}\label{Eq:IdentityOnOrthogonalCoordinate}
\]

We first prove that \(W=V\). Suppose otherwise. The domains \(V\) and \(W\) cannot be \(\sqsubseteq\)-comparable. Indeed, if \(V\sqsubsetneq W=g^\sharp(V)\), then applying successive powers of \(g\) gives the strictly increasing chain \(V\sqsubsetneq gV\sqsubsetneq g^2V\sqsubsetneq\cdots\), contradicting finite complexity. The case \(W\sqsubsetneq V\) is analogous.

Suppose that \(V\bot W\). Since orthogonal coordinates in \(\mathbf E_U\) can be prescribed independently, fix \(q\in\calC W\), and, for each \(p\in\calC V\), choose \(\mathbf b\in\mathbf E_U\) satisfying \(b_V=\{p\}\) and \(b_W=\{q\}\).
Equation~\eqref{Eq:IdentityOnOrthogonalCoordinate} then gives \(q=g^\diamond(V)p\) for every \(p\in\calC V\), which is impossible since \(g^\diamond(V)\) is an isometry and \(\calC V\) is unbounded.

It remains to consider the case \(V\pitchfork W\). Since \(\calC V\) is unbounded and \(g^\diamond(V)\) is an isometry, for every sufficiently large \(R\) we may choose \(p\in\calC V\) such that
\[\dist_{\calC V}(p,\rho_V^W)>R\quad\text{and}\quad\dist_{\calC W}\bigl(g^\diamond(V)p,\rho_W^V\bigr)>R.\]
Choose \(\mathbf b\in\mathbf E_U\) with \(b_V=\{p\}\). For \(R\) larger than the consistency constant, consistency for \(V\pitchfork W\) forces \(b_W\) to lie in a uniformly bounded neighborhood of \(\rho_W^V\). On the other hand, Equation~\eqref{Eq:IdentityOnOrthogonalCoordinate} gives \(b_W=\{g^\diamond(V)p\}\), contradicting the choice of \(p\).
This concludes that \(g^\sharp(V)=V\).

Now let \(p\in\calC V\) be arbitrary, and choose \(\mathbf b\in\mathbf E_U\) with \(b_V=\{p\}\).
Since \(g^\sharp(V)=V\) and \(g\mathbf b=\mathbf b\), we have \(\{p\}=b_V=(g\mathbf b)_V=g^\diamond(V)b_V=\{g^\diamond(V)p\}\). Thus \(g^\diamond(V)p=p\). Since \(p\in\calC V\) was arbitrary, \(g^\diamond(V)=\id_{\calC V}\).

Finally, suppose that \(g\) is strongly fully supported on \(U\). Since \(U\in\B(g)\), any other domain \(W\in\B(g)\) would satisfy \(W\bot U\). By the first part, however, \(g\) acts trivially on \(\calC W\), contradicting \(W\in\B(g)\). Hence \(\B(g)=\{U\}\), and \(g\) is rigidly fully supported on \(U\).
\end{proof}

\begin{remark}\label{Rem:ThreeSupportConditions}
The three support conditions satisfy
\[\text{strongly fully supported}\Longrightarrow\text{rigidly fully supported}\Longrightarrow\text{fully supported}.
\]
Strong full support is formulated using pointwise triviality on the metric orthogonal factor \(\mathbf E_U\), whereas rigid full support only requires triviality on the coordinate spaces associated to unbounded domains orthogonal to \(U\). Neither condition requires the induced action on all bounded hierarchical data in \(\frS_U^\perp\) to be trivial.
\end{remark}

The following lemma is the main reason for requiring \(G_U\) to be an HHG.

\begin{lemma}\label{Lem:StronglyFullySupportedElements}
Suppose that an HHG \((G,\frS)\) satisfies the \(\bfF_U\) stabilizers property.
Let \(U\in\frS\) be an unbounded domain. Then the following hold.
\begin{enumerate}
\item\label{Item:ExistenceOfAxial} The subgroup \(G_U\) contains an axial element of \(G\) strongly fully supported on \(U\). 

\item\label{Item:TwoSFS} Let \(f,g\in G\) be axial elements strongly fully supported on \(U\). Then either \(f\) and \(g\) have nonzero powers in common, or the pair \(\{f,g\}\) is geometrically irredundant.

\item\label{Item:Quasiline} If \(\calC U\) is a quasi-line, then any two axial elements strongly fully supported on \(U\) have nonzero powers in common.

\item\label{Item:Nonquasiline} If \(\calC U\) is not a quasi-line, then \(G_U\) contains an infinite collection of axial elements strongly fully supported on \(U\) such that every finite subcollection is geometrically irredundant.
\end{enumerate}
\end{lemma}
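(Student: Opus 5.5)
The plan is to work inside the HHG \((G_U,\frS_U)\) supplied by the \(\mathbf F_U\) stabilizers property, and to transfer statements about its maximal domain \(U\) back to \((G,\frS)\). The basic dictionary is this. For \(g\in G_U\), the \(U\)-coordinate of \(G\) and of \(\mathbf F_U\) agree, since both are the coordinate \(\calC U\) with the same action \(g^\diamond(U)\). Also, \(G_U\) acts on the standard copy \(\phi_U(\mathbf F_U\times\{e\})\) compatibly with the product map. Hence \(U\in\B_{G_U}(g)\) if and only if \(U\in\B_G(g)\). Moreover, an element of \(G_U\) has infinite order if and only if it is axial in \(G\), by \Cref{Prop:AxialEllipticDichotomy} applied in \(G\).

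For \eqref{Item:ExistenceOfAxial}, \(U\) is the \(\sqsubseteq\)-maximal domain of \(\frS_U\) and \(\calC U\) is unbounded. \Cref{Lem:ExistenceOfAxialElement} applied to \((G_U,\frS_U)\) gives \(g\in G_U\) with \(\B_{G_U}(g)=\{U\}\). By the dictionary, \(g\) has infinite order, so it is axial in \(G\) with \(U\in\B_G(g)\). It is therefore strongly fully supported on \(U\).

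For \eqref{Item:TwoSFS}, \Cref{Lem:StrongImpliesRigid} gives \(\B(f)=\B(g)=\{U\}\), and some power \(f^M,g^M\) fixes \(U\) and is loxodromic on \(\calC U\). The limit sets \(\Lambda f=\{f^\pm\}\) and \(\Lambda g\) in \(\partial\calC U\) are pairs of points. If they are disjoint, the pair is geometrically irredundant. Otherwise, I intend to show that they coincide and that \(f,g\) have a common power; this is the main obstacle. My first approach is to use properness of the action of \(G_U\) on \(\mathbf F_U\), combined with an HHG analogue of the statement ``loxodromics sharing a fixed point at infinity in a hyperbolic space under a proper action are commensurable.''

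Concretely, I would show that \(H=\langle f,g\rangle\le G_U\) is virtually cyclic. Suppose \(f^+=g^+\). Then the elements \(g^{-n}f^{m}\), with \(m,n\) chosen so that translation lengths on \(\calC U\) nearly cancel, move a point on the quasi-axis in \(\calC U\) a bounded amount. Every other domain \(V\sqsubsetneq U\) has bounded \(\pi_V\)-orbits under \(\langle f\rangle,\langle g\rangle\), since \(\B_{G_U}=\{U\}\). I then control \(d_V(x,g^{-n}f^m x)\) for \(V\sqsubsetneq U\) using the bounded geodesic image theorem: relative projections \(\rho^V_U\) far from the common axis do not see the motion. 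The distance formula in \(\mathbf F_U\) then bounds \(d(x,g^{-n}f^mx)\) uniformly, and properness forces two such elements to coincide, giving \(f^a=g^b\). The abelianization-type argument then yields \(a=\pm b\) up to the sign conventions of ``nonzero powers in common.'' Alternatively, if bookkeeping for the domains \(V\) near the axis gets messy, I would cite the known fact that in an HHG, axial elements with \(\B=\{S\}\) are WPD (Morse) on \(\calC S\), applied in \((G_U,\frS_U)\). WPD elements sharing a fixed point at infinity are commensurable, because the elementary closure \(E(f)\) is virtually cyclic and contains \(g\).

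For \eqref{Item:Quasiline}, if \(\calC U\) is a quasi-line then \(\partial\calC U\) has exactly two points, so \(\Lambda f=\Lambda g\). Geometric irredundance fails, and \eqref{Item:TwoSFS} gives common powers.

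For \eqref{Item:Nonquasiline}, take \(g\) from \eqref{Item:ExistenceOfAxial}. Since \(\calC U\) is not a quasi-line and \(G_U\) acts coboundedly on \(\mathbf F_U\), hence coarsely surjectively on \(\calC U\), the action of \(G_U\) on \(\calC U\) is non-elementary. The reason is that a cobounded action with a loxodromic on a hyperbolic space that is not a quasi-line cannot fix a point or pair at infinity. So there is \(h\in G_U\) with \(h\{g^\pm\}\cap\{g^\pm\}=\varnothing\). The conjugates \(g_n=(h^{n}g h^{-n})\), or better the ping-pong family \(g_n=h_n g h_n^{-1}\) with \(h_n=(g^{k}h)^n\), lie in \(G_U\), remain strongly fully supported on \(U\) since conjugation inside \(G_U\) preserves \(U\), and have limit pairs \(h_n\{g^\pm\}\). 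I choose these to be pairwise disjoint using north–south dynamics of a loxodromic \(g^kh\) whose fixed points avoid \(\{g^\pm\}\). Every finite subfamily is then geometrically irredundant, since all share \(\B=\{U\}\) and have disjoint limit sets.
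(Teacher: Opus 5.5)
Parts (1) and (3) match the paper. Your WPD fallback for (2) is essentially the paper's argument. The paper applies \cite[Corollary~14.4]{BHS17I} to the HHG $(G_U,\frS_U)$ to see that $G_U\curvearrowright\calC U$ is acylindrical. It then notes that if the limit sets of $f$ and $g$ meet, $\langle f,g\rangle$ fixes a point of $\partial\calC U$, and Osin's trichotomy \cite[Theorem~1.1]{Osi16} makes $\langle f,g\rangle$ virtually cyclic. Your phrase ``$E(f)$ contains $g$'' presupposes that $g$ preserves $\{f^\pm\}$. When the limit sets share only one point, it is exactly this trichotomy (or the standard WPD argument) that supplies it. Your first route, via properness and the distance formula, is not needed. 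As sketched, it does not give uniform control over the domains $V\sqsubsetneq U$ whose relative projections $\rho^V_U$ lie near the common axis. The abelianization remark has no meaning in a general $G$, and it is unnecessary, since a common power only requires $f^a=g^b$ with $a,b\neq 0$.

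The genuine gap is in (4). You justify non-elementarity of $G_U\curvearrowright\calC U$ by claiming that a cobounded action with a loxodromic element on a hyperbolic space that is not a quasi-line cannot fix a point at infinity. This is false in general. Focal actions are counterexamples: $\mathrm{BS}(1,2)$ acting on its Bass--Serre tree (the $3$-regular tree) is cobounded, contains loxodromics and fixes an end, yet the tree is not a quasi-line. So nothing you wrote rules out $G_U$ fixing $g^+$, and the existence of $h\in G_U$ with $h\{g^\pm\}\cap\{g^\pm\}=\varnothing$ is not established.

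The missing input is again acylindricity of $G_U\curvearrowright\calC U$ from \cite[Corollary~14.4]{BHS17I}; the WPD property you invoke in (2) also suffices. By Osin's trichotomy, an acylindrical action with a loxodromic element has one of two forms:
\begin{enumerate}
\item the group is virtually cyclic, in which case coboundedness forces $\calC U$ to be a quasi-line, contrary to assumption;
\item the group contains a loxodromic independent of $g$, and a large power of it serves as your $h$.
\end{enumerate}
With this step inserted, your ping-pong family $h_n g h_n^{-1}$ works. The paper instead takes the infinitely many pairwise independent loxodromics supplied directly by \cite[Theorem~1.1]{Osi16}.
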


\begin{proof}
\noindent\eqref{Item:ExistenceOfAxial}
By the \(\mathbf F_U\) stabilizers property, \((G_U,\frS_U)\) is an HHG whose maximal domain is \(U\). Since \(\calC U\) is unbounded, \Cref{Lem:ExistenceOfAxialElement} gives an element \(g\in G_U\) acting loxodromically on \(\calC U\).
Hence \(U\in\B_G(g)\). In particular, \(g\) has infinite order, and is therefore axial in \(G\) by \Cref{Prop:AxialEllipticDichotomy}. Since \(g\in G_U\), the element \(g\) is strongly fully supported on \(U\).

\smallskip

For the rest of the proof, we use the fact that the action \(G_U\curvearrowright\calC U\) is acylindrical \cite[Corollary~14.4]{BHS17I}, applied to the HHG \((G_U,\frS_U)\), whose maximal domain is \(U\).

\smallskip

\noindent\eqref{Item:TwoSFS}
Let \(f,g\in G\) be axial elements strongly fully supported on \(U\). By \Cref{Lem:StrongImpliesRigid}, \(\B(f)=\B(g)=\{U\}\). Moreover, \(f\) and \(g\) act loxodromically on \(\calC U\). If their limit sets in \(\partial\calC U\) are disjoint, then \(\{f,g\}\) is geometrically irredundant. 
Otherwise, \(\langle f,g\rangle\) fixes a point of \(\partial\calC U\). Since the restricted action \(\langle f,g\rangle\curvearrowright\calC U\) is acylindrical and contains a loxodromic element, and the non-elementary alternative in \cite[Theorem~1.1]{Osi16} is impossible for an action fixing a boundary point, \(\langle f,g\rangle\) is virtually cyclic. Hence \(f\) and \(g\) have nonzero powers in common.

\smallskip

\noindent\eqref{Item:Quasiline}
Suppose that \(\calC U\) is a quasi-line. Any two loxodromic elements of \(G_U\curvearrowright\calC U\) have the same limit set in \(\partial\calC U\). Hence, by Item~\eqref{Item:TwoSFS}, any two axial elements strongly fully supported on \(U\) have nonzero powers in common.

\smallskip

\noindent\eqref{Item:Nonquasiline}
Suppose that \(\calC U\) is not a quasi-line. Let \(\pi_U^{\bfF}\colon\bfF_U\rightarrow\calC U\) denote the projection associated to the inherited HHS structure \((\bfF_U,\frS_U)\). Since \(\pi_U^{\bfF}\) is coarsely surjective and \(G_U\)-equivariant, and \(G_U\curvearrowright\bfF_U\) is cobounded, the action \(G_U\curvearrowright\calC U\) is cobounded. By Item~\eqref{Item:ExistenceOfAxial}, this action contains a loxodromic element. If it were elementary, coboundedness would imply that \(\calC U\) is a quasi-line, contrary to assumption. Thus the action is non-elementary. Since it is also acylindrical, \cite[Theorem~1.1]{Osi16} gives infinitely many pairwise independent loxodromic elements of \(G_U\). Each such element \(a\) satisfies \(U\in\B_G(a)\), and hence is an axial element of \(G\) strongly fully supported on \(U\). Since these loxodromic elements are pairwise independent, their limit sets in \(\partial\calC U\) are pairwise disjoint. Therefore, every finite subcollection is geometrically irredundant.
\end{proof}

For rich-family HHG structures on compact special groups, the subgroups \(G_U\) are directly related to stabilizers of convex subcomplexes.

\begin{lemma}[Stabilizers for rich-family structures]\label{Lem:RichFamilyStabilizers}
Let \(G=\pi_1(X)\), where \(X\) is a compact special cube complex.
Then any two parallel convex subcomplexes \(F,F'\) of the universal cover \(\widetilde X\) have the same \(G\)-stabilizer, i.e., \(\Stab_G(F)=\Stab_G(F')\).
Consequently, if \(\widetilde X\) is equipped with the \(G\)-invariant factor system arising from a rich family and \(U=[F]\in\frS\), then \(G_U=\Stab_G(F)\).
\end{lemma}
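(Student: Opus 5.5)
The plan has two parts: an algebraic fact about stabilizers in special cube complexes, and an identification of the metric orthogonal stabilizer with a convex-subcomplex stabilizer in the rich-family model.

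\textbf{First assertion.} Let \(F,F'\subseteq\widetilde X\) be parallel convex subcomplexes and \(g\in\Stab_G(F)\). Then \(gF'\) is parallel to \(gF=F\), hence parallel to \(F'\). Parallel convex subcomplexes are crossed by the same hyperplanes, so \(F'\) and \(gF'\) are crossed by the same hyperplane set \(\mathcal H\). For any such pair of parallel convex subcomplexes, the set of hyperplanes separating them is finite and contains no element of \(\mathcal H\). Moreover, the convex hull of \(F'\cup gF'\) is a product \(F'\times I\), where \(I\) is a finite CAT(0) cube complex whose hyperplanes are exactly the separating ones.

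I will argue that \(g\) translates \(F'\) across these separating hyperplanes and derive a contradiction with specialness unless \(gF'=F'\). Write \(\mathcal K\) for the finite set of hyperplanes separating \(F'\) from \(F\). Then \(g\mathcal K\) is the set separating \(gF'\) from \(F\). If \(gF'\neq F'\), some hyperplane \(H\) separates \(F'\) from \(gF'\). Every hyperplane crossing \(F\) also crosses \(F'\), so \(H\) does not cross \(F\). I expect \(H\) and \(gH\) to be distinct hyperplanes that osculate or inter-osculate across the product region, or that are \(g\)-translates of one another lying on the same side in a nested way. Because \(\mathcal K\) is finite, powers of \(g\) then yield a contradiction: either \(g^n\) eventually maps \(\mathcal K\) to itself, or some hyperplane \(H\) satisfies \(gH\) nested in \(H\) with both transverse to \(\mathcal H\).

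Special cube complexes have no self-osculating or inter-osculating hyperplanes, and no directly self-intersecting ones. I will use these conditions, together with the fact that \(g\) stabilizes the finite set of hyperplanes parallel to the \(I\)-direction, to force \(g\) to fix \(F'\). The clean form is via Haglund--Wise: \(G\) embeds in a RAAG \(\bbA(\Gamma)\), and \(\widetilde X\) embeds convexly in \(\widetilde S_\Gamma\). In \(\widetilde S_\Gamma\), a convex subcomplex parallel to a lift \(h\widetilde S_\Lambda\) is \(hz\widetilde S_\Lambda\) with \(z\) in the centralizer-type parallel set \(\bbA(\operatorname{lk}\Lambda)\). The stabilizer of \(h\widetilde S_\Lambda\) is \(h\bbA(\Lambda)h^{-1}\), and it also fixes \(hz\widetilde S_\Lambda\), since \(z\) commutes with \(\bbA(\Lambda)\). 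For general convex \(F\), I would first reduce to this case by passing to the smallest sub-Salvetti lift containing \(F\) in the parallelism class, but that step may need care. I expect this to be the main obstacle: making the specialness argument rigorous for arbitrary convex subcomplexes rather than sub-Salvetti lifts.

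\textbf{Second assertion.} Here \(U=[F]\) is a parallelism class. The product region \(\mathbf P_U\) is the union of the parallel copies, identified with \(F\times E\), where \(E\) indexes the parallel copies and corresponds to \(\mathbf E_U\). The \(\mathbf F_U\)-fibers are exactly the parallel copies \(F'\).

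For \(G_U\subseteq\Stab_G(F)\): an element of \(G_U\) stabilizes every fiber, in particular \(F\) itself. Conversely, if \(g\in\Stab_G(F)\), then \(g\) fixes \(U\), and by the first part it stabilizes every \(F'\) parallel to \(F\). It therefore stabilizes every \(\mathbf F_U\)-fiber, and hence acts trivially on \(\mathbf E_U\) up to the tuple-model identification. So \(g\in G_U\).
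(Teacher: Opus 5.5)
Your second assertion is handled correctly and in the same way as the paper, which takes the $\mathbf F_U$-fibers to be the parallel copies of $F$ via \cite[Remark~13.5]{BHS17I}. The first assertion, however, carries all the content, and it is not proved.

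The hyperplane paragraph only lists what you expect to happen, and the properties you invoke there cannot suffice on their own. Consider the one-vertex square structure on the Klein bottle: its hyperplanes are embedded and do not osculate, but one of them is one-sided. There, the glide reflection $(x,y)\mapsto(-x,y+1)$ of $\widetilde X=\mathbb{R}^2$ stabilizes the line $\{0\}\times\mathbb{R}$ yet swaps the parallel lines $\{\pm1\}\times\mathbb{R}$. What the statement really needs is the consistent oriented edge labelling pulled back from $S_\Gamma$.

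Your RAAG paragraph does use that labelling, but it stops too early. Write $b=\widetilde\phi(x)$ for a vertex $x\in F$. Knowing that $\phi_*(g)\in b\bbA(\Lambda)b^{-1}$ stabilizes the standard subcomplex $bz\widetilde S_\Lambda\supseteq\widetilde\phi(F')$ says nothing about $F'$ itself. In general $F'$ is a proper convex subcomplex of $bz\widetilde S_\Lambda$, not a standard one; already $\widetilde\phi(\widetilde X)$ need not be standard. This is exactly the obstacle you flag and leave open.

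The missing point is that the parallelism $\tau\colon F\to F'$ is realized in $\widetilde S_\Gamma$ by left multiplication by an element centralizing $\phi_*(g)$. No reduction to standard subcomplexes is needed. The paper argues as follows.

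Take the product region $F\times[0,a]\hookrightarrow\widetilde X$ of \cite[Lemma~2.4]{BHS17I}. Let $z$ be the word read along the $[0,a]$-path $q$ from $x$ to $x'=\tau(x)$. Let $w$ be the word read along a path $\alpha\subseteq F$ from $x$ to $gx$, so that $bw=\phi_*(g)b$.

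Each hyperplane dual to an edge of $q$ crosses each hyperplane dual to an edge of $\alpha$, since they meet in a square of the product region. Crossing hyperplanes of $\widetilde S_\Gamma$ carry distinct adjacent labels, so $zw=wz$.

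Edges dual to a common hyperplane carry the same oriented label, so $\widetilde\phi(\tau(\alpha))$ reads $w$ starting at $bz$. It therefore ends at $bzw=bwz=\phi_*(g)bz=\widetilde\phi(gx')$. Injectivity of $\widetilde\phi$ gives $gx'\in F'$, and applying the same argument to $g^{-1}$ yields $gF'=F'$.

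In your language: on vertices $\widetilde\phi(F')=c\,\widetilde\phi(F)$, where $c=bzb^{-1}$ commutes with $\phi_*(g)$. Hence $\widetilde\phi(gF')=c\,\widetilde\phi(gF)=\widetilde\phi(F')$. Your sketch lacks this identity $\widetilde\phi(F')=c\,\widetilde\phi(F)$; the weaker containment $\widetilde\phi(F')\subseteq bz\widetilde S_\Lambda$ does not suffice.
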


\begin{proof}
Since \(X\) is compact special, there is a local isometry \(\phi\colon X\rightarrow S_\Gamma\) to the Salvetti complex of a finite graph \(\Gamma\), inducing an injection \(\phi_*\colon G\rightarrow\bbA(\Gamma)\) and a \(G\)-equivariant convex embedding \(\widetilde\phi\colon\widetilde X\rightarrow\widetilde S_\Gamma\); see the proof of \cite[Corollary~8.9]{BHS17I}. 
Let \(F,F'\subseteq\widetilde X\) be parallel. By \cite[Lemma~2.4]{BHS17I}, there is a cubical isometric embedding \(F\times[0,a]\rightarrow\widetilde X\) whose boundary fibers are \(F\) and \(F'\). Let \(\tau\colon F\rightarrow F'\) be the induced parallelism isomorphism.

By symmetry, it suffices to show \(\Stab_G(F)\leq\Stab_G(F')\). Fix \(g\in\Stab_G(F)\) and \(x\in F^{(0)}\), and set \(x'=\tau(x)\). Choose a combinatorial path \(\alpha\subseteq F\) from \(x\) to \(gx\), and let \(q\) be the path from \(x\) to \(x'\) in the \([0,a]\)-direction. Let \(w,h\in \bbA(\Gamma)\) be the words read by \(\widetilde\phi(\alpha)\) and \(\widetilde\phi(q)\), respectively. Writing \(\widetilde\phi(x)=b\), we have \(bw=\phi_*(g)b\) and \(\widetilde\phi(x')=bh\).
Every hyperplane crossing \(q\) crosses every hyperplane crossing \(\alpha\), so \(hw=wh\). Since the parallel path \(\tau(\alpha)\subseteq F'\) has the same oriented edge labels as \(\alpha\), its terminal vertex has image \(bhw=bwh=\phi_*(g)bh=\widetilde\phi(gx')\).
Thus, by the injectivity of \(\widetilde\phi\), we have \(gx'\in F'\). Since \(x\) was arbitrary and \(\tau\) is surjective, it follows that \(gF'\subseteq F'\). Applying the same argument to \(g^{-1}\) gives \(gF'=F'\). Hence \(\Stab_G(F)\leq\Stab_G(F')\), and symmetry gives \(\Stab_G(F)=\Stab_G(F')\).

Now suppose that the factor system arises from a rich family and let \(U=[F]\in\frS\). By \cite[Remark~13.5]{BHS17I}, we may take \(\mathbf F_U=F\) and \(\mathbf E_U=E_F\), with the \(\mathbf F_U\)-fibers given by the parallel copies of \(F\). Therefore \(G_U\) consists precisely of the elements stabilizing every parallel copy of \(F\), which by the first part is equivalent to stabilizing \(F\). Hence \(G_U=\Stab_G(F)\).
\end{proof}

\subsection{The class \texorpdfstring{\(\Xi\)}{Xi}: examples and permanence}\label{Subsection:Xi}

We now introduce the two additional element-level properties used in our RAAG embedding arguments and define the class \(\Xi\). We then verify the basic examples and establish permanence under the relatively hyperbolic and direct-product constructions.

\begin{definition}\label{Def:OtherTwoProperties}
Let \((G,\frS)\) be an HHG. We say that \((G,\frS)\) satisfies
\begin{itemize}[wide]
\item 
the \emph{orthogonal decomposition property} if, for every infinite-order element \(g\in G\), writing \(\B(g)=\{U_1,\dots,U_k\}\), there exist a positive integer \(n\) and axial elements \(h_1,\dots,h_k\in G\) such that \(g^n=h_1\cdots h_k\), where \(h_i\) is strongly fully supported on \(U_i\) for every \(i\), and
\item
the \emph{commutative property} if any two axial elements strongly fully supported on orthogonal unbounded domains commute.
\end{itemize}

We denote by \(\Xi\) the class of HHG structures satisfying the \(\mathbf F_U\) stabilizers property, the orthogonal decomposition property, and the commutative property, and by \(\Xi_{\mathrm{cc}}\) the subclass of \(\Xi\) consisting of structures with clean containers.
\end{definition}

\begin{remark}[Comparison with Abbott--Behrstock]\label{Rem:ComparisonWithAB}
The class \(\Xi\) above is inspired by, but is not identical to, the class denoted by \(\Xi\) in \cite{AB23}. We use the same metric orthogonal stabilizers \(G_U\). Our \(\mathbf F_U\) stabilizers property strengthens the corresponding property of \cite{AB23}: rather than merely requiring \(G_U\) to be uniformly coarsely equal to a standard copy of \(\mathbf F_U\), we require the natural action \(G_U\curvearrowright(\mathbf F_U,\frS_U)\) to define an HHG structure.

In the other direction, the orthogonal decomposition and commutative properties of \cite{AB23} imply the corresponding properties used here. Their orthogonal decomposition property uses specific factors \(h_U\in G_U\) arising from gate maps and includes a uniform translation-length condition, while their commutative property requires \([G_U,G_V]=1\) whenever \(U\bot V\). Moreover, clean containers are part of the definition of \(\Xi\) in \cite{AB23}, whereas in the present paper they are imposed only for the subclass \(\Xi_{\mathrm{cc}}\). Consequently, an Abbott--Behrstock \(\Xi\)-structure which also satisfies our strengthened \(\mathbf F_U\) stabilizers property belongs to \(\Xi_{\mathrm{cc}}\). No converse inclusion follows from the definitions.

The strengthening of the \(\mathbf F_U\) stabilizers property is needed because the metric coarse-equality condition alone does not imply cofiniteness of the action \(G_U\curvearrowright\frS_U\). We discuss this issue, including a counterexample to \cite[Lemma~3.4]{AB23} and its effect on the subsequent arguments of \cite{AB23}, in \Cref{Appendix:ComparisonWithAB}.
\end{remark}

\begin{proposition}\label{Prop:Xiclass}
The following HHG structures belong to \(\Xi\):
\begin{enumerate}
\item the one-domain HHG structures on hyperbolic groups;
\item rich-family HHG structures on compact special groups;
\item\label{Item:HyperbolicRelto}
the relatively hyperbolic HHG structures whose peripheral structures belong to \(\Xi\).
\end{enumerate}
\end{proposition}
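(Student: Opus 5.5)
We verify the three defining properties of \(\Xi\) (the \(\mathbf F_U\) stabilizers property, orthogonal decomposition, commutativity) separately for each of the three families.

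\emph{Case (1): one-domain structures on hyperbolic groups.} Here \(\frS=\{S\}\) with \(\calC S\) quasi-isometric to \(G\), and \(\mathbf E_S\) is a point. Hence \(G_S=\Stab_G(S)=G\) and \((G_S,\frS_S)=(G,\frS)\), which gives the \(\mathbf F_U\) stabilizers property at once. Every infinite-order \(g\) has \(\B(g)=\{S\}\) and lies in \(G_S\), so it is strongly fully supported on \(S\); decomposition therefore holds with \(n=1\) and \(h_1=g\). There are no orthogonal pairs, so commutativity is vacuous.

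\emph{Case (2): rich-family structures on compact special groups.} I would use \Cref{Lem:RichFamilyStabilizers}, which gives \(G_U=\Stab_G(F)\) for \(U=[F]\). The \(\mathbf F_U\) stabilizers property should follow from the fact that \(\Stab_G(F)\) acts properly and cocompactly on \(F\). Cocompactness holds because the factor system consists of finitely many \(G\)-orbits of convex subcomplexes, each corresponding to a finite cover of a compact sub-special complex. Then \(F\) is a compact special group's universal cover, carrying the restricted factor system \(\{F'\in\mathfrak F: F'\subseteq F\}\) (up to projections). Restricting the \(\bbA(\Gamma)\)-factor system via \cite[Lemma~8.5]{BHS17I} gives an HHG structure whose index set is \(\frS_U\), so its HHS is \((\mathbf F_U,\frS_U)\).

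For decomposition in this case, I would pass to \(\bbA(\Gamma)\) via \(\phi_*\) and write a power of \(g\) in pure-factor form \(g^n=h_1\cdots h_k\), as in Kim--Koberda. Each \(h_i\) is a cyclically reduced pure factor conjugated by a common element \(w\), with support the subgraph \(\Lambda_i=\mathrm{supp}(h_i)\); distinct \(\Lambda_i\) are pairwise joined. The \(h_i\) lie in \(\phi_*(G)\) after a further power, using separability and convexity of \(\widetilde X\): the element \(g\) stabilizes a convex product of axes. Each \(h_i\) stabilizes the flat-factor \(w\widetilde S_{\Lambda_i'}\), where \(\Lambda_i'\) is the smallest rich-family member containing \(\Lambda_i\), and this stabilizer is its \(G_U\). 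Commutativity follows because strongly fully supported elements on orthogonal \(U=[F]\), \(V=[F']\) lie in stabilizers of subcomplexes spanning a product, hence in \(w\bbA(\Lambda)w^{-1}\) and \(w\bbA(\Lambda')w^{-1}\) with \(\Lambda\) joined to \(\Lambda'\).

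\emph{Case (3): relatively hyperbolic structures.} Domains are the top domain \(S\) (the coned-off graph), together with translates \(gV\) of domains \(V\) of peripheral structures \((P,\frS_P)\). Orthogonality occurs only within a single peripheral coset, and \(\mathbf E_S\) is a point, so \(G_S=G\). The required HHG structure on \(G\) is the given one. For \(U\) in \(g\frS_P\), I would show \(G_U=g(P_{U'})g^{-1}\), computed in \(P\); the relevant observation is that elements stabilizing \(U\) with \(U\) unbounded-in-depth must stabilize the coset \(gP\), by malnormality and almost-malnormality. Decomposition and commutativity then reduce to the peripherals. For \(g\) with \(\B(g)=\{S\}\), \(g\in G_S\) and nothing further is needed. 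Otherwise \(g\) is conjugate into \(P\) and the peripheral properties apply.

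The main obstacle is identifying \(G_U\) and \(\mathbf E_U\) for non-top domains in case (3). Here \(\mathbf E_U\) computed in \(G\) must be compared with that computed in \(P\), including the possible adjoined \(A_U\) when clean containers fail. A secondary obstacle is, in case (2), ensuring that the pure factors land in \(G\) rather than \(\bbA(\Gamma)\).
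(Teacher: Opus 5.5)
\textbf{Verdict.} Your cases (1) and (3), and the \(\mathbf F_U\) stabilizers and commutative properties in case (2), follow the paper's argument. There is, however, a genuine gap in the orthogonal decomposition property for case (2).

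\textbf{The gap.} Your sketch needs the following: for a pure-factor decomposition \(\phi_*(g)^n=h_1\cdots h_k\) in \(\bbA(\Gamma)\), some power of each \(h_i\) lies in \(\phi_*(G)\). ``Separability and convexity'' does not establish this, and for arbitrary subgroups it is false. For example, \(a\cdot b\) is the pure-factor decomposition of \(ab\), yet \(\langle ab\rangle\le\langle a,b\mid[a,b]\rangle\) contains no nontrivial power of \(a\) or of \(b\). So the cocompact convex action on \(\widetilde X\) has to be used in an essential way, and you give no such argument. One way would be to show that \(\Stab_G(F)\times\Stab_G(E)\) has finite index in \(\Stab_G(F\times E)\) for a product region \(F\times E\). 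Even granting that the factors lie in \(G\), you never connect them to \((G,\frS)\). Its domains are parallelism classes of the factors induced on \(\widetilde X\), not the classes \([w\widetilde S_{\Lambda_i'}]\) in \(\widetilde S_\Gamma\). Strong full support also needs \(h_i\in G_{U_i}\) for exactly the domains \(U_i\in\B(g)\), together with \(U_i\in\B(h_i)\). Neither the identification of the domains nor the loxodromicity is addressed.

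\textbf{How the paper handles it.} First it replaces \(g\) by a power fixing \(\B(g)=\{U_1,\dots,U_k\}\) pointwise. It then quotes the Abbott--Behrstock orthogonal decomposition for rich-family structures \cite[Example~3.2 and Proposition~3.10(2)]{AB23}, which gives \(g=h_1\cdots h_k\) with \(h_i\in G_{U_i}\). Finally it applies \Cref{Lem:StrongImpliesRigid}. For \(j\neq i\) we have \(U_j\bot U_i\), so \(h_j\) acts trivially on \(\calC U_i\). Hence \(h_i\) acts on \(\calC U_i\) exactly as \(g\) does, i.e.\ loxodromically, and so \(U_i\in\B(h_i)\). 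The same reference supplies \([G_U,G_V]=1\) for \(U\bot V\). Your direct commutativity argument, using \Cref{Lem:RichFamilyStabilizers} to pass to parallel copies through a common vertex, is also fine.

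\textbf{Two smaller points.}
\begin{enumerate}
\item Having finitely many \(G\)-orbits of factors does not by itself make \(\Stab_G(F)\curvearrowright F\) cocompact. A non-periodic line in the Cayley tree of a free group has trivial stabilizer. This is why the paper cites \cite[Lemma~2.3 and Remark~2.4]{HS20}.
\item In case (3), the obstacle you name is not a real one. If \(fU=U\) for \(U\in a\frS_P\), then \(f\in aPa^{-1}\) directly from the coset-indexed construction, with no malnormality needed. Moreover, every domain orthogonal to \(U\) already lies in \(a\frS_P\), since distinct copies are transverse and nothing is orthogonal to \(S\). So \(\mathbf E_U\) and its \(\Stab_G(U)\)-action are literally the peripheral ones transported by \(a\).
\end{enumerate}
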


\begin{proof}
\noindent\emph{(1) Hyperbolic groups.}
Let \(G\) be a hyperbolic group equipped with the one-domain HHG structure, whose unique domain is \(S\). Then \(G_S=G\), \(\bfF_S=G\), and \(\frS_S=\{S\}\), so the \(\bfF_U\) stabilizers property holds. Every infinite-order element \(g\in G\) has \(\B(g)=\{S\}\), and, since there is no domain orthogonal to \(S\), the element \(g\) is strongly fully supported on \(S\). Thus \(g\) itself gives the required one-factor decomposition. The commutative property is vacuous.

\smallskip

\noindent\emph{(2) Compact special groups.}
Let \(X\) be a compact special cube complex, and equip \(G=\pi_1(X)\) with a rich-family HHG structure. For \(U=[F]\in\frS\), \Cref{Lem:RichFamilyStabilizers} gives \(G_U=\Stab_G(F)\).
The induced factor system on \(F\) is \(\Stab_G(F)\)-invariant and locally finite, while \(\Stab_G(F)\curvearrowright F=\mathbf F_U\) is proper and cocompact; see \cite[Lemma~2.3 and Remark~2.4]{HS20}. It follows that the action on \(\frS_U\) is cofinite, and hence \(G_U\curvearrowright(\mathbf F_U,\frS_U)\) defines an HHG structure. Thus our \(\mathbf F_U\) stabilizers property holds.

Let \(g\in G\) have infinite order. After replacing \(g\) by a positive power, assume that \(g\) fixes \(\B(g)=\{U_1,\dots,U_k\}\) elementwise. 
The orthogonal decomposition property of \cite{AB23}, verified in \cite[Example~3.2 and Proposition~3.10(2)]{AB23}, gives \(g=h_1\cdots h_k\), where \(h_i\in G_{U_i}\). For \(j\neq i\), since \(U_j\bot U_i\), \Cref{Lem:StrongImpliesRigid} implies that \(h_j\) acts trivially on \(\calC U_i\). Hence the action of \(h_i\) on \(\calC U_i\) agrees with that of \(g\), which is loxodromic since \(U_i\in\B(g)\). Thus \(U_i\in\B(h_i)\), and each \(h_i\) is strongly fully supported on \(U_i\).
This proves our orthogonal decomposition property.

Finally, \cite[Example~3.2 and Proposition~3.10(2)]{AB23} gives \([G_U,G_V]=1\) whenever \(U\bot V\), so our commutative property also holds. Hence the rich-family HHG structure belongs to \(\Xi\).

\smallskip

\noindent\emph{(3) Relatively hyperbolic groups.}
Let \(G\) be hyperbolic relative to a finite collection \(\mathcal P\), where each \(P\in\mathcal P\) is equipped with an HHG structure \((P,\frS_P)\in\Xi\), and equip \(G\) with the relatively hyperbolic HHG structure of \cite[Theorem~9.1]{BHS19II}.

Let \(S\) be the maximal domain. Since no domain is orthogonal to \(S\), we have \(G_S=G\), and the nesting-factor structure at \(S\) is the original HHG structure on \(G\). For every other domain \(aV\in a\frS_P\), the nesting and orthogonal factors and their metric actions are inherited from the corresponding peripheral structure. In particular, \(G_{aV}=aG_V^Pa^{-1}\), where \(G_V^P\) is the metric orthogonal stabilizer of \(V\) in \((P,\frS_P)\). Thus the \(\mathbf F_U\) stabilizers property holds.

We also use that if \(U\in a\frS_P\) and \(f\in G\) fixes \(U\), then \(f\in aPa^{-1}\). Conjugation by \(a^{-1}\) identifies the HHG structure on \(aPa^{-1}\) induced by \(a\frS_P\) with \((P,\frS_P)\). Hence strong full support for elements of \(aPa^{-1}\) is inherited from the peripheral structure.

Let \(g\in G\) have infinite order. If \(g\) acts loxodromically on \(\calC S\), then \(\B(g)=\{S\}\), so \(g\) is strongly fully supported on \(S\). Otherwise, \cite[Theorem~1.14]{Osi06} implies that \(g\) is conjugate into a peripheral subgroup. Applying the orthogonal decomposition property there and conjugating back gives the required decomposition of a positive power of \(g\) in \(G\).

Finally, any two orthogonal domains lie in a common peripheral copy \(a\frS_P\). Axial elements strongly fully supported on such domains belong to \(aPa^{-1}\), and after conjugating by \(a^{-1}\) they commute by the commutative property of \((P,\frS_P)\). Therefore \((G,\frS)\in\Xi\).
\end{proof}

In particular, every virtually compact special group is virtually in \(\Xi\).

\begin{remark}\label{Rem:XiandP_2}
We correct here two statements from our previous work \cite{OP25}. In \cite[Remark~6.12]{OP25}, we claimed that the \(\mathbf F_U\) stabilizers property in the sense of \cite[Definition~3.3]{AB23} implies that, for every unbounded domain \(U\in\frS\), there exists an axial element rigidly fully supported on \(U\). Consequently, in \cite[\S~1.3]{OP25}, we stated that the class \(\Xi_{\mathrm{AB}}\) from \cite[Proposition~3.10]{AB23} belongs to \(\mathcal P'_2\), which denotes the class of HHGs in which every unbounded domain admits at least one axial element rigidly fully supported on it.

The first assertion requires the strengthened \(\mathbf F_U\) stabilizers property used in the present paper. Indeed, by \Cref{Lem:StronglyFullySupportedElements}, this property provides an axial element strongly fully supported on every unbounded domain, which is rigidly fully supported by \Cref{Lem:StrongImpliesRigid}. Accordingly, what follows from this argument is that, in the notation of \cite[\S~1.3]{OP25}, every structure in the present class \(\Xi\) belongs to \(\mathcal P'_2\).
\end{remark}

\begin{proposition}[Direct-product permanence]\label{Prop:XiDirectProducts}
Let \((G_1,\frS_1)\) and \((G_2,\frS_2)\) be HHG structures belonging to \(\Xi\), and suppose that both structures have clean containers. Then the standard direct-product HHG structure on \(G_1\times G_2\) belongs to \(\Xi\).
\end{proposition}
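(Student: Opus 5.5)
We first recall the standard direct-product HHG structure on \(G=G_1\times G_2\), following \cite[\S8]{BHS19II}. The index set is \(\frS=\frS_1\sqcup\frS_2\sqcup\{S\}\sqcup\{\text{container domains}\}\). Here every domain of \(\frS_1\) is orthogonal to every domain of \(\frS_2\). The new maximal domain \(S\) has bounded \(\calC S\), and the extra domains \(V_{U}\) serve as containers for orthogonal complements. With clean containers in each factor we can choose these extra domains explicitly. For \(U\in\frS_1\), the complement of \(U\) in \(G\) is \(C^1_U\sqcup\frS_2\), where \(C^1_U\) is the clean container in \(\frS_1\) if it exists. This is organized by a domain whose nesting factor is \(\mathbf F_{C^1_U}\times G_2\). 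All the added domains have bounded coordinate spaces. In particular:
\begin{itemize}
\item the unbounded domains of \(\frS\) are exactly the unbounded domains of \(\frS_1\sqcup\frS_2\);
\item each \(G_i\) acts trivially on \(\frS_{3-i}\).
\end{itemize}

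For the \(\mathbf F_U\) stabilizers property, we treat three types of domain separately.
\begin{itemize}
\item For \(U\in\frS_1\), we identify \(\mathbf F_U\) with the factor-\(1\) nesting factor and \(\mathbf E_U\) with \(\mathbf E^1_U\times G_2\), the latter up to bounded data on the added domains. We then check that \(\Stab_G(U)=\Stab_{G_1}(U)\times G_2\) and \(G_U=(G_1)_U\times\{1\}\). The \(G_2\)-component must act trivially on the \(G_2\)-factor of \(\mathbf E_U\), and \(G_2\) acts freely on itself. Hence \((G_U,\frS_U)\cong((G_1)_U,(\frS_1)_U)\) is an HHG.
\item The case \(U\in\frS_2\) is symmetric.
\item For an added domain \(W\), the nesting factor is a product \(\mathbf F_{C^1}\times\mathbf F_{C^2}\) (a whole factor being allowed). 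We argue that \(G_W\) is a product of the corresponding stabilizers \(G^1_{C^1}\times G^2_{C^2}\), or a whole factor. The product HHG structure then makes it an HHG. For \(W=S\) this gives all of \(G\).
\end{itemize}
This verification for the added container domains is where clean containers are used, and I expect it to be the main obstacle. Concretely, one must check that the orthogonal factor of such a \(W\) is controlled by the clean containers in each factor, so that elements acting trivially on it are exactly the product of the factor metric stabilizers. One must also check that the induced index set \(\frS_W\) carries a cofinite action. For the first check, I would compute \(\frS_W^\perp\) directly: clean containers make it \(\frS_{C_W}\) for an explicit domain \(C_W\) built from \(C^1_{C^1}\) and \(C^2_{C^2}\). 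I would then apply \cite[Lemma~3.6]{AB23} in each factor.

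For the orthogonal decomposition property, take \(g=(g_1,g_2)\) of infinite order. Then \(\B(g)=\B_{G_1}(g_1)\sqcup\B_{G_2}(g_2)\), where a factor of finite order contributes the empty set. This identity holds because the projections to domains of \(\frS_i\) factor through \(G_i\), and the added domains are bounded. We apply the factor decompositions \(g_i^{n_i}=h^i_1\cdots h^i_{k_i}\) and take \(n=\operatorname{lcm}(n_1,n_2)\), raising each factor decomposition to the appropriate power. Raising the product of commuting decompositions to a power uses the commutative property in each factor, since the factors of a decomposition are supported on pairwise orthogonal domains and thus commute. Each \(h^1_j\), viewed as \((h^1_j,1)\), lies in \(G_{U}=(G_1)_U\times\{1\}\) and keeps \(U\) active, so it is strongly fully supported in \(G\). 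Symmetric statements hold for the second factor.

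For the commutative property, let \(a\) and \(b\) be strongly fully supported on orthogonal unbounded domains \(U\) and \(V\). By the computation of \(G_U\) above, each lies in a single factor. If they lie in different factors, they commute trivially. If they lie in the same factor, then \(U\bot V\) holds in that factor. Strong full support in \(G\) restricts to strong full support in the factor, since \(G_U=(G_1)_U\times 1\) and the bigsets agree. The commutative property of that factor then gives \([a,b]=1\).
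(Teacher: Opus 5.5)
Your outline follows the paper's proof step for step. The case split for the \(\mathbf F_U\) stabilizers property is the same, except that the paper quotes \cite[Proposition~3.10(4)]{AB23} for the added domains instead of computing \(\frS_W^\perp\) by hand. The decomposition again uses the splitting \(\B_G(g)=\B_{G_1}(g_1)\sqcup\B_{G_2}(g_2)\). The paper first passes to a power that kills any finite-order coordinate, and you need this too, since the decomposition property only applies to infinite-order elements. Commutativity is again obtained by locating the two supports in the factors.

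The weak point is the identity \(G_U=(G_1)_U\times\{1\}\) for \(U\in\frS_1\), on which your decomposition and commutativity steps rest. Your reason, that \(G_2\) acts freely on itself, is not the relevant fact. \(G_U\) is the kernel of the action on the abstract tuple factor \(\mathbf E_U\), and \(g_2\) acts on the \(\frS_2\)-coordinates only through its image in \(\Aut(\frS_2)\). What you actually get is \(G_U=(G_1)_U\times K_2\), where \(K_2\) is the kernel of the action of \(G_2\) on its own tuple space. \(K_2\) is a finite normal subgroup: its elements fix every projection tuple, so they move points a uniformly bounded amount. It need not be trivial, because an HHG structure \(G_2\to\Aut(\frS_2)\) need not be faithful. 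The paper's proof asserts the same identity \(G_W=(G_i)_W\times\{1\}\), so you inherited this gap rather than introducing it.

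The gap is harmless for the \(\mathbf F_U\) stabilizers property and for the decomposition. Your commutativity step (``each lies in a single factor'') fails, however, and so can the conclusion. Take \(G_1=\mathbb Z\) with its one-domain structure \(\{S_1\}\). Take \(G_2=\mathbb Z\times F\) with \(F\) finite nonabelian, one domain \(S_2\), \(\calC S_2=\mathbb R\), \(\pi_{S_2}(n,\sigma)=n\), and \((n,\sigma)\) acting by translation by \(n\). Both structures lie in \(\Xi_{\mathrm{cc}}\) for trivial reasons: there is a single domain, every infinite-order element is strongly fully supported on it, and the commutative property is vacuous. In the product, every added domain has a point coordinate space and \(F\) acts trivially on \(\calC S_2\). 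So for noncommuting \(\sigma,\tau\in F\) the elements \((1,(0,\sigma))\in G_{S_1}\) and \((0,(1,\tau))\in G_{S_2}\) are strongly fully supported on the orthogonal unbounded domains \(S_1\) and \(S_2\). Their commutator is \((0,(0,[\sigma,\tau]))\neq 1\).

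An extra hypothesis is therefore needed, for example that \(K_1\) and \(K_2\) are central. This includes the trivial case, which holds automatically for torsion-free factors such as compact special groups. Under that hypothesis \(G_W=(G_1)_W\times K_2\) for \(W\in\frS_1\), symmetrically for \(\frS_2\), and your argument goes through because the extra \(K_i\)-coordinates commute with everything.
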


\begin{proof}
Set \(G=G_1\times G_2\), and equip \(G\) with the standard direct-product HHG structure of \cite[Proposition~8.27]{BHS19II}; we use the description of its orthogonal stabilizers given in \cite[Proposition~3.10(4)]{AB23}.
Recall that \(\frS=\{S,U_1,U_2\}\sqcup\frS_1\sqcup\frS_2\sqcup\{V_U\mid U\in\frS_1\sqcup\frS_2\}\), and that the domains \(S,U_1,U_2\), and \(V_U\) have bounded coordinate spaces.

We first verify the \(\mathbf F_U\) stabilizers property. For the maximal domain \(S\), the assertion is immediate. If \(W\in\frS_i\), then \(G_W=(G_i)_W\times\{1\}\) (up to interchanging the factors), and the nesting-factor structure is inherited from \((G_i,\frS_i)\). For \(U_i\), the nesting-factor structure is the HHG structure on \(G_i\) with a bounded maximal domain adjoined. Thus only the auxiliary domains \(V_U\) require further discussion.

Consider \(V_U\), where \(U\in\frS_1\); the case \(U\in\frS_2\) is symmetric.
Suppose first that no domain of \(\frS_1\) is orthogonal to \(U\). Then \(\mathbf E_U^1\) is a point, and hence 
\[\mathbf F_{V_U}=\mathbf E_U^1\times G_2\cong G_2, \qquad G_{V_U}=\{1\}\times G_2.\]
The induced nesting-factor structure is obtained from \((G_2,\frS_2)\) by adjoining the bounded maximal domain \(V_U\), and hence is an HHG structure.

Now suppose that \(U\) has a nonempty orthogonal complement in \(\frS_1\), and let \(C_U\in\frS_1\) be its clean container. By the description of the standard product structure,
\[\mathbf F_{V_U}=\mathbf E_U=\mathbf E_U^1\times G_2\cong\mathbf F_{C_U}^1\times G_2,\quad\text{and}\quad G_{V_U}=(G_1)_{C_U}\times G_2.\]
By the \(\mathbf F_U\) stabilizers property for \((G_1,\frS_1)\), the group \((G_1)_{C_U}\) is an HHG with underlying HHS \((\mathbf F_{C_U}^1,(\frS_1)_{C_U})\). The standard direct-product construction therefore makes \((G_1)_{C_U}\times G_2\) an HHG with underlying HHS \(\mathbf F_{C_U}^1\times G_2\cong\mathbf F_{V_U}\). Thus the \(\mathbf F_U\) stabilizers property holds for \(V_U\).

We next verify the orthogonal decomposition property. Since all domains added in the product construction have bounded coordinate spaces, for every \(g=(g_1,g_2)\in G\),
\[\B_G(g)=\B_{G_1}(g_1)\sqcup\B_{G_2}(g_2).\]
Let \(g\) have infinite order. After passing to a common positive power, we may assume that each finite-order coordinate is trivial and apply the orthogonal decomposition property to each infinite-order coordinate. Embedding the resulting factors in \(G_1\times\{1\}\) and \(\{1\}\times G_2\), respectively, gives a decomposition of a positive power of \(g\). Moreover, if \(h\in(G_1)_W\) is strongly fully supported on \(W\in\frS_1\), then \((h,1)\in G_W=(G_1)_W\times\{1\}\) and \(W\in\B_G((h,1))\). Thus these embedded factors remain strongly fully supported in the product structure.

Finally, let \(f_1,f_2\) be strongly fully supported on orthogonal unbounded domains. Since all additional domains have bounded coordinate spaces, each supporting domain belongs to \(\frS_1\) or to \(\frS_2\).
If both supports lie in \(\frS_1\), say on \(W_1,W_2\), then \(f_i\in G_{W_i}=(G_1)_{W_i}\times\{1\}\), so the elements commute by the commutative property of \((G_1,\frS_1)\). The case where both supports lie in \(\frS_2\) is symmetric. If the supports lie in different factors, the elements belong to the two commuting direct factors of \(G_1\times G_2\). Therefore \((G,\frS)\in\Xi\).
\end{proof}

\begin{corollary}\label{Cor:XiCleanContainers}
The class \(\Xi_{\mathrm{cc}}\) contains the one-domain HHG structures on hyperbolic groups and the rich-family HHG structures on compact special groups. Moreover, it is closed under the standard relatively hyperbolic construction and under finite direct products equipped with the standard direct-product HHG structure.
\end{corollary}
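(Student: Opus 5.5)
The corollary has four parts, and I will check each against \Cref{Prop:Xiclass}, \Cref{Prop:XiDirectProducts} and the clean-container condition. Membership in \(\Xi\) is already given by \Cref{Prop:Xiclass} for hyperbolic groups, compact special groups and the relatively hyperbolic construction, and by \Cref{Prop:XiDirectProducts} for direct products. So in every case the only extra work is to show that the relevant HHS has clean containers.

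\textbf{Base cases.} The one-domain structure on a hyperbolic group has no orthogonal pairs at all, so the clean-container condition holds vacuously. For rich-family structures on compact special groups, I will quote \cite[Example~3.2 and Proposition~3.10(2)]{AB23}, where these structures are placed in the Abbott--Behrstock class, whose definition includes clean containers. To see this directly: for \(U=[F]\) with \(F\) a lift of a sub-Salvetti complex modelled on \(\Lambda\in\mathcal R\), the domains orthogonal to \(U\) are those modelled on subgraphs of the join complement, i.e. on \(\mathrm{lk}(\Lambda)=\bigcap_{v\in\Lambda}\mathrm{lk}(v)\). Richness puts \(\mathrm{lk}(\Lambda)\) in \(\mathcal R\), and the corresponding parallelism class is a container that is itself orthogonal to \(U\). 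For a finite pairwise orthogonal family one intersects further, which closure under intersections permits. Restricting the factor system to \(\widetilde X\) preserves this.

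\textbf{Relatively hyperbolic construction.} In the structure of \cite[Theorem~9.1]{BHS19II}, orthogonality only occurs between domains inside a single peripheral copy \(a\frS_P\), and nothing is orthogonal to the maximal domain \(S\). So let \(\mathcal U\) be a non-maximal pairwise orthogonal family. If it is empty or contains \(S\), the condition is vacuous or trivial. Otherwise \(\mathcal U\subseteq a\frS_P\), and the peripheral clean container \(aC_{a^{-1}\mathcal U}\) works. The point to check is that this container stays orthogonal to each \(U_i\) and that every domain of \(\frS\) orthogonal to all the \(U_i\) lies in \(a\frS_P\). Both follow from the fact that orthogonality is confined to peripheral copies.

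\textbf{Direct products.} \Cref{Prop:XiDirectProducts} gives membership in \(\Xi\), so what remains is clean containers for the product structure of \cite[Proposition~8.27]{BHS19II}. I expect this to be the main obstacle, since it depends on exactly how the auxiliary domains \(U_1,U_2,V_U\) behave under orthogonality. I plan to cite \cite[Proposition~3.10(4)]{AB23}, which states that the Abbott--Behrstock class, including its clean-container clause, is closed under direct products. Failing that, I will check case by case:
\begin{itemize}
\item A family inside \(\frS_1\) gets container \(V_{C}\), where \(C\) is the \(\frS_1\)-container, or \(U_2\) if the family is maximal in \(\frS_1\).
\item A mixed family with parts in \(\frS_1\) and \(\frS_2\) gets the container assembled from the two factor containers, namely the corresponding \(V\)-type domain.
\end{itemize}
In each case I will confirm that the proposed container is orthogonal to every member of the family and that it absorbs every common orthogonal.

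Finite products then follow by induction on the number of factors, using that the two-factor product of structures with clean containers again has clean containers.
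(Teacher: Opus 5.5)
Your decomposition matches the paper's. Membership in \(\Xi\) comes from \Cref{Prop:Xiclass} and \Cref{Prop:XiDirectProducts}, and clean containers are handled separately; the paper simply cites \cite[Propositions~7.2--7.4]{ABD21} for the three non-vacuous cases. Your hands-on argument for the relatively hyperbolic construction is correct and can replace the citation of \cite[Proposition~7.4]{ABD21}, since orthogonality only occurs inside a single peripheral copy and nothing is orthogonal to \(S\).

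\textbf{The gap: clean containers for direct products.} You need to show that clean containers alone survive the product. This is needed both for the two-factor statement and to continue the induction.
\begin{itemize}
\item Your main citation, \cite[Proposition~3.10(4)]{AB23}, is a closure result for the Abbott--Behrstock class. That class has stronger orthogonal decomposition and commutative properties than ours, and by \Cref{Rem:ComparisonWithAB} membership in \(\Xi_{\mathrm{cc}}\) does not by definition give membership in it. So the hypotheses of that result are not available for your factors.
\item Your fallback case check does not fill this in. In the description used in the proof of \Cref{Prop:XiDirectProducts}, \(\mathbf F_{V_U}\cong\mathbf F^1_{C_U}\times G_2\), so the \(\frS_1\)-domains nested into \(V_U\) are exactly those orthogonal to \(U\). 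Take \(C=C_{\mathcal U}\). Every \(U_i\in\mathcal U\) is orthogonal to \(C\), hence nested into \(V_C\). Therefore \(V_C\) is not orthogonal to \(U_i\). It also does not contain the domains nested into \(C\), which are the ones it was supposed to absorb. For a single domain \(U\), the correct container is \(V_U\), not \(V_{C_U}\).
\item In your mixed-family bullet, no actual domain is named. Each listed \(V_U\) nests all of one factor, so none of them is orthogonal to a family meeting both \(\frS_1\) and \(\frS_2\).
\end{itemize}
The clean fix is the one the paper uses: cite \cite[Proposition~7.3]{ABD21}, which states that the standard product construction preserves clean containers.

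Separately, your direct sketch for rich families does not justify the passage from \(\widetilde S_\Gamma\) to \(\widetilde X\). There your citation carries the case, so this is not a gap in the proof.
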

\begin{proof}
The one-domain HHG structure on a hyperbolic group has clean containers vacuously. Rich-family HHG structures on compact special groups have clean containers by \cite[Proposition~7.2]{ABD21}, and the standard relatively hyperbolic construction preserves clean containers by \cite[Proposition~7.4]{ABD21}. Therefore, membership in \(\Xi\) for these three kinds of structures follows from \Cref{Prop:Xiclass}.

The standard direct-product construction preserves clean containers by \cite[Proposition~7.3]{ABD21}, and preserves membership in \(\Xi\) by \Cref{Prop:XiDirectProducts}. The finite direct-product statement follows by induction.
\end{proof}

\subsection{RAAGs generated by strongly fully supported elements}\label{Subsection:StronglyFullySupportedRAAGs}

We now apply \Cref{Prop:SupportedAxialRAAGs} to strongly fully supported elements in an HHG belonging to \(\Xi\).


\begin{proposition}[Undistorted RAAGs in \(\Xi\)]\label{Thm:RAAGEmbeddingTheorem}
Let \((G,\frS)\in\Xi\), and let \(g_1,\dots,g_m\in G\) be a geometrically irredundant collection of axial elements. Suppose that each \(g_i\) is strongly fully supported on an unbounded domain \(U_i\in\frS\). 
Then there exists \(D>0\) such that, for every integer \(d\geq D\), the subgroup \(\langle g_1^d,\dots,g_m^d\rangle\le G\) is an undistorted subgroup isomorphic to \(\bbA(\Gamma)\), where \(\Gamma\) has vertex set \(\{v_1,\dots,v_m\}\), and two distinct vertices \(v_i\) and \(v_j\) are adjacent if and only if \(U_i\bot U_j\).
\end{proposition}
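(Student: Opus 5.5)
The plan is to apply \Cref{Prop:SupportedAxialRAAGs} with \(k_1=k_2=1\) and \(Q=0\), after verifying its hypotheses from the \(\Xi\)-axioms and \Cref{Lem:StrongImpliesRigid}.

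First, by \Cref{Lem:StrongImpliesRigid}, each \(g_i\) is rigidly fully supported, hence fully supported, on \(U_i\). Together with the assumed geometric irredundance, this gives the standing hypotheses of \Cref{Prop:SupportedAxialRAAGs}.

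Next, I verify the commutation hypothesis with \(k_1=1\). If \(U_i\bot U_j\), then \(g_i\) and \(g_j\) are axial elements strongly fully supported on orthogonal unbounded domains. The commutative property in \Cref{Def:OtherTwoProperties} then gives \([g_i,g_j]=1\) directly. So there is no need to pass to powers via \cite[Proposition~6.11]{OP25}.

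Then I verify the bounded-orbit hypothesis with \(k_2=1\) and \(Q=0\). Fix \(i\), and let \(j\) satisfy \(U_j\bot U_i\). Since \(g_j\in G_{U_j}\) and \(U_i\) is an unbounded domain orthogonal to \(U_j\), \Cref{Lem:StrongImpliesRigid} gives \(g_j^\sharp(U_i)=U_i\) and \(g_j^\diamond(U_i)=\id_{\calC U_i}\). Every element of \(\langle g_j\mid U_j\bot U_i\rangle\) is a product of such generators and their inverses. Hence it fixes \(U_i\) and acts on \(\calC U_i\) as a composition of identity maps, i.e.\ trivially, so its orbits have diameter \(0\). This is exactly the content of \Cref{Rem:RigidSupportAndOPTheorem}.

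Finally, \Cref{Prop:SupportedAxialRAAGs} with \(k=\operatorname{lcm}(1,1)=1\) produces \(D>0\) such that, for every \(d\geq D\), the assignment \(v_i\mapsto g_i^d\) extends to an injective homomorphism \(\bbA(\Gamma)\to G\) that is a quasi-isometric embedding. Its image \(\langle g_1^d,\dots,g_m^d\rangle\) is therefore isomorphic to \(\bbA(\Gamma)\) and undistorted.

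The only step needing care is the bounded-orbit verification. There one must check that composing the coordinate isometries really gives the identity on \(\calC U_i\). This follows from the composition rule for \(f^\diamond\) in \(\Aut(\frS)\), since each generator fixes \(U_i\). Beyond this I expect no genuine obstacle: the content lies in the earlier results.
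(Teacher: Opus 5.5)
Your proposal is correct and follows the paper's proof: the commutative property gives the commutation hypothesis of \Cref{Prop:SupportedAxialRAAGs} with \(k_1=1\), and \Cref{Lem:StrongImpliesRigid} makes each \(g_j\) with \(U_j\bot U_i\) act trivially on \(\calC U_i\), which gives the bounded-orbit hypothesis with \(k_2=1\) and \(Q=0\). The conclusion then follows directly with \(k=1\).
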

\begin{proof}
If \(U_i\bot U_j\), then \(g_i\) and \(g_j\) commute by the commutative property. Thus the commutation hypothesis of \Cref{Prop:SupportedAxialRAAGs} holds with \(k_1=1\).
By \Cref{Lem:StrongImpliesRigid}, every \(g_j\) is rigidly fully supported on \(U_j\). Hence, whenever \(U_j\bot U_i\), the element \(g_j\) acts trivially on \(\calC U_i\). It follows that every element of \(\langle g_j\mid U_j\bot U_i\rangle\) acts trivially on \(\calC U_i\).
Thus the bounded-orbit hypothesis of \Cref{Prop:SupportedAxialRAAGs} holds with \(k_2=1\) and \(Q=0\). The conclusion follows.
\end{proof}

We will use the following consequence of the RAAG embedding theorem together with the structural assumptions defining \(\Xi\).

\begin{lemma}\label{lem:commuting factors}
Let \((G,\frS)\in\Xi\), and let \(f_1,f_2\in G\) be axial elements strongly fully supported on unbounded domains. If \(f_1\) and \(f_2\) commute, then either they have nonzero powers in common or their supporting domains are orthogonal.
\end{lemma}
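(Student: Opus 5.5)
Let \(U_1\) and \(U_2\) be the supporting domains, so that \(\B(f_i)=\{U_i\}\) by \Cref{Lem:StrongImpliesRigid}. The plan is a case analysis on the relation between \(U_1\) and \(U_2\). In each case where the conclusion is not immediate, I will derive a contradiction from \Cref{Thm:RAAGEmbeddingTheorem}, which turns a geometrically irredundant pair on non-orthogonal domains into a free group.

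\emph{Case \(U_1=U_2=U\).} By \Cref{Lem:StronglyFullySupportedElements}\eqref{Item:TwoSFS}, either \(f_1\) and \(f_2\) have nonzero powers in common, and we are done, or the pair \(\{f_1,f_2\}\) is geometrically irredundant. In the second case, \Cref{Thm:RAAGEmbeddingTheorem} applies with \(\Gamma\) the graph on two vertices with no edge, since \(U\not\bot U\) by anti-reflexivity. It gives \(d\ge 1\) with \(\langle f_1^d,f_2^d\rangle\cong\bbA(\Gamma)\), a free group of rank two. This contradicts \([f_1^d,f_2^d]=1\).

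\emph{Case \(U_1\neq U_2\) and \(U_1\not\bot U_2\).} Then \(\B(f_1)\neq\B(f_2)\), so \(\{f_1,f_2\}\) is geometrically irredundant by definition. \Cref{Thm:RAAGEmbeddingTheorem} again yields \(\langle f_1^d,f_2^d\rangle\cong F_2\), which contradicts commutativity. Hence the only remaining possibility is \(U_1\bot U_2\), which is the second alternative of the statement.

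The hypotheses of \Cref{Thm:RAAGEmbeddingTheorem} are exactly that the elements are strongly fully supported and geometrically irredundant, so nothing further needs to be checked. The only delicate point is the equal-domain case, which relies on the acylindricity dichotomy already packaged in \Cref{Lem:StronglyFullySupportedElements}\eqref{Item:TwoSFS}. I therefore expect the argument to be short.
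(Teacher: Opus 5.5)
Your proposal is correct and follows essentially the paper's argument. In both, geometric irredundance of \(\{f_1,f_2\}\) comes from distinct bigsets or from \Cref{Lem:StronglyFullySupportedElements}\eqref{Item:TwoSFS}, and \Cref{Thm:RAAGEmbeddingTheorem} then yields a free group of rank two whenever \(U_1\not\bot U_2\), contradicting commutation; the only difference is that you split into cases by how \(U_1\) and \(U_2\) are related, whereas the paper first assumes there are no common powers and then derives \(U_1\bot U_2\).
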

\begin{proof}
Let \(f_i\) be strongly fully supported on \(U_i\), for \(i=1,2\), and suppose that \(f_1\) and \(f_2\) commute and have no nonzero powers in common.
If \(U_1\neq U_2\), then \(\B(f_1)=\{U_1\}\neq\{U_2\}=\B(f_2)\), so \(\{f_1,f_2\}\) is geometrically irredundant. If \(U_1=U_2\), the same conclusion follows from \Cref{Lem:StronglyFullySupportedElements}\eqref{Item:TwoSFS}. Thus \(\{f_1,f_2\}\) is geometrically irredundant in either case.

If \(U_1\not\bot U_2\), then \Cref{Thm:RAAGEmbeddingTheorem} implies that sufficiently large powers of \(f_1\) and \(f_2\) generate a nonabelian free group, contradicting the assumption that \(f_1\) and \(f_2\) commute. Therefore \(U_1\bot U_2\).
\end{proof}

\section{The expanded core graph and intermediate RAAGs}\label{Section:IntermediateRAAG}

In this section, we define the expanded core graph, relate it to the extension graphs of RAAGs generated by strongly fully supported axial elements, and prove the intermediate RAAG factorization \Cref{Prop:IntermediateRAAG}, which is a more precise version of \Cref{Thm:IntroIntermediateRAAG}.

\subsection{The expanded core graph}

\begin{definition}[Expanded core graph]\label{Def:ExpandedCoreGraph}
Let \((G,\frS)\) be an HHG, and let \(\frS^\infty=\{U\in\frS\mid \diam(\calC U)=\infty\}\).
We denote by
\[\frG=\left\{U\in\frS^\infty\mid U\text{ is \(\sqsubseteq\)-minimal in }\frS^\infty\right\}.\]
Equivalently, \(U\in\frG\) if \(U\) is unbounded and there is no unbounded domain \(V\in\frS\) with \(V\sqsubsetneq U\). The elements of \(\frG\) are called \emph{minimal unbounded domains}, and the \emph{core graph} of \((G,\frS)\), denoted \(\mathcal{G}^{\frS}\), is the graph with vertex set \(\mathfrak G\), where two vertices are adjacent if and only if the corresponding domains are orthogonal.

For each \(U\in\mathfrak G\), let \(V_U\) be a set consisting of a single element if \(\calC U\) is quasi-isometric to \(\R\), and a countably infinite set otherwise. 
The \emph{expanded core graph}, denoted $\EOW$, or simply $\EO$ when \(\frS\) is clear, is defined by
\[V(\EOW)=\bigsqcup_{U\in\mathfrak G} V_U \quad\text{and}\quad E(\EOW)= \left\{\{v,w\}\mid v\in V_U,\ w\in V_{U'},\ \text{and } U\bot U' \right\}.\]

If, moreover, \(G\) has finite index in a group \(K\), then \((G,\frS)\) is a virtual HHG structure on \(K\) in the sense of \Cref{Def:VirtualHHGStructure}. In this case, we also refer to \(\mathcal G^{\frS}\) and \(\EOW\) as the core graph and the expanded core graph associated to this virtual HHG structure on \(K\).
\end{definition}

\begin{remark}\label{Rmk:VariantsofCoregraph}
In \cite{OP25}, the \emph{orthogonality graph} $\mathcal{O}^{\frS}$ is the graph whose vertex set is the collection of all unbounded domains, and where two vertices are joined by an edge if and only if the corresponding domains are orthogonal. In particular, our core graph \(\mathcal{G}^{\frS}\) is the subgraph of $\mathcal{O}^{\frS}$ induced by \(\mathfrak G\).

In \cite{HMS26}, the \emph{minimal orthogonality graph} is defined using the \(\sqsubseteq\)-minimal domains of \(\frS\) as vertices.
By contrast, our core graph uses the minimal elements of the subposet \(\frS^\infty\) of unbounded domains. Thus a minimal unbounded domain need not be \(\sqsubseteq\)-minimal in \(\frS\), since it may contain bounded domains nested below it.
\end{remark}


The expansion in \(\EO\) is meant to record axial directions. More precisely, under the \(\bfF_U\) stabilizers property, the vertices replacing a non-quasi-line domain can be identified with common-power classes of strongly fully supported axial elements on that domain.

\begin{definition}\label{Def:AxialDirections}
Let \(U\in\frS\) be an unbounded domain. Define
\[\mathscr D(U)=\left\{f\in G\ \middle|\ f \text{ is axial and strongly fully supported on } U \right\}\big/\sim,\]
where $f\sim g$ if and only if $f$ and $g$ have nonzero powers in common.
We refer to an element of \(\mathscr D(U)\) as an \emph{axial direction on \(U\)}.
\end{definition}

\begin{lemma}\label{Lem:AxialDirections}
Suppose that \((G,\frS)\) satisfies the \(\bfF_U\) stabilizers property.
Let \(U\in\frG(\subseteq\frS)\). Then the following hold.
\begin{enumerate}
\item If \(\calC U\) is a quasi-line, then \(\mathscr D(U)\) consists of a single element.
\item If \(\calC U\) is not a quasi-line, then \(\mathscr D(U)\) is countably infinite.
\end{enumerate}
Consequently, for every \(U\in\frG\), after fixing the set \(V_U\) in \Cref{Def:ExpandedCoreGraph}, we may fix a bijection
\[\iota_U:\mathscr D(U)\longrightarrow V_U.\]
\end{lemma}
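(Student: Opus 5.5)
The plan is to derive both items from \Cref{Lem:StronglyFullySupportedElements}, and then bound the cardinality of \(\mathscr D(U)\) from above using the countability of \(G\). First I will check that \(\sim\) is an equivalence relation on the set of axial elements strongly fully supported on \(U\). Reflexivity and symmetry are clear. For transitivity, suppose \(f^a=g^b\) and \(g^c=h^d\) with \(a,b,c,d\neq 0\). Then \(f^{ac}=g^{bc}=h^{bd}\), and \(ac\neq 0\). So \(\mathscr D(U)\) is well defined.

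Next, \(\mathscr D(U)\neq\varnothing\) by \Cref{Lem:StronglyFullySupportedElements}\eqref{Item:ExistenceOfAxial}, since \(U\in\frG\) is unbounded. For item (1), assume \(\calC U\) is a quasi-line. Then \Cref{Lem:StronglyFullySupportedElements}\eqref{Item:Quasiline} says that any two axial elements strongly fully supported on \(U\) have nonzero powers in common, so they are \(\sim\)-equivalent. Hence \(\mathscr D(U)\) consists of exactly one element.

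For item (2), assume \(\calC U\) is not a quasi-line. \Cref{Lem:StronglyFullySupportedElements}\eqref{Item:Nonquasiline} gives infinitely many axial elements \(a_1,a_2,\dots\in G_U\), all strongly fully supported on \(U\), with every pair geometrically irredundant. I must show that these lie in pairwise distinct classes. The elements \(a_i\) and \(a_j\) have \(\B(a_i)=\B(a_j)=\{U\}\), so by geometric irredundance their limit sets in \(\partial\calC U\) are disjoint. If instead \(a_i^p=a_j^q\) with \(p,q\neq0\), the two elements would share a loxodromic power on \(\calC U\) and hence have the same limit set, a contradiction. So \(\mathscr D(U)\) is infinite. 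It is also at most countable, because \(G\) is finitely generated and therefore countable. Thus \(\mathscr D(U)\) is countably infinite.

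Finally, \(V_U\) is a singleton in the quasi-line case and countably infinite otherwise, so in each case \(|\mathscr D(U)|=|V_U|\) and a bijection \(\iota_U\) exists. One point needs care: \Cref{Def:ExpandedCoreGraph} uses "quasi-isometric to \(\R\)" while the lemma says "quasi-line", and I will treat these as the same notion. The only step that is not pure bookkeeping is the limit-set argument in item (2), which converts geometric irredundance into inequivalence, and it is short.
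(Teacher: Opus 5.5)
Your proposal is correct and follows the paper's proof: item (1) comes from the quasi-line item of \Cref{Lem:StronglyFullySupportedElements}, and item (2) from the non-quasi-line item of that lemma together with the countability of \(G\). Your additional checks (transitivity of \(\sim\), nonemptiness via the existence item, and the limit-set argument showing that geometrically irredundant elements on the same domain share no nonzero powers) simply make explicit what the paper leaves implicit.
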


\begin{proof}
If \(\calC U\) is a quasi-line, then any two axial elements strongly fully supported on \(U\) have a common nonzero power by \Cref{Lem:StronglyFullySupportedElements}. Hence \(\mathscr D(U)\) consists of a single element.

Suppose that \(\calC U\) is not a quasi-line. By \Cref{Lem:StronglyFullySupportedElements}, there exist infinitely many pairwise non-commensurable axial elements strongly fully supported on \(U\). Hence \(\mathscr D(U)\) is infinite. Since \(G\) is countable, \(\mathscr D(U)\) is countably infinite.
\end{proof}

\subsection{Intermediate RAAGs}\label{Subsection:IntermediateRAAGs}

We first show that the extension graph of a RAAG generated by strongly fully supported axial elements embeds into \(\EO\). We then combine this with the orthogonal decomposition property to factor arbitrary RAAG embeddings through such intermediate RAAGs.

\begin{lemma}\label{lem:intermediate RAAG}
Let \((G,\frS)\in\Xi\), and let \(g_1,\dots,g_m\in G\) be axial elements strongly fully supported on unbounded domains \(U_1,\dots,U_m\in\frS\), respectively.
Suppose that, for some \(N>0\), the subgroup \(M=\langle g_1^N,\dots,g_m^N\rangle\) is a RAAG \(\bbA(\Gamma)\), where \(\Gamma\) has vertex set \(\{v_1,\dots,v_m\}\), the vertex \(v_i\) corresponds to the generator \(g_i^N\), and \(v_i\) is adjacent to \(v_j\) if and only if \(U_i\bot U_j\).
Then the extension graph \(\Gamma^e\) embeds as an induced subgraph of \(\EO\).
\end{lemma}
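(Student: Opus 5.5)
The plan is to define an explicit map from the vertex set of \(\Gamma^e\) to \(V(\EO)\), sending each conjugate of a generator to the axial direction it determines, and then to check that it is well defined, injective, and preserves both adjacency and non-adjacency. Every vertex of \(\Gamma^e\) has the form \(a=x g_i^N x^{-1}\) with \(x\in M\). Since \(x\) lies in \(G\), equivariance gives \(xG_{U_i}x^{-1}=G_{xU_i}\) and \(\B(xg_ix^{-1})=x\B(g_i)\). Hence \(xg_ix^{-1}\) is an axial element strongly fully supported on \(xU_i\), and so is \(a\) itself. I would set
\[\Phi(a)=\iota_{xU_i}\bigl([a]\bigr)\in V_{xU_i},\]
using the bijection \(\iota\) of \Cref{Lem:AxialDirections}. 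Here \([a]=[xg_ix^{-1}]\in\mathscr D(xU_i)\), because \(a\) and \(xg_ix^{-1}\) share powers.

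\textbf{Well-definedness.} Suppose \(x g_i^N x^{-1}=y g_j^N y^{-1}\). Then the active domain of this single element is \(xU_i=yU_j\), because by \Cref{Lem:StrongImpliesRigid} its bigset is a singleton. The two conjugates of \(g_i,g_j\) also share a nonzero power, so they define the same class in \(\mathscr D\). Hence \(\Phi(a)\) does not depend on the chosen representation of \(a\).

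\textbf{Injectivity.} Suppose \(\Phi(a)=\Phi(b)\). Then \(a\) and \(b\) are supported on the same domain and have nonzero powers in common. By \Cref{Lem:ExtensionGraphPowerRigidity}\eqref{Item:CommonPowersExtensionVertices}, applied inside \(M\cong\bbA(\Gamma)\), this gives \(a=b\). Note that this also covers the quasi-line case, where \(V_U\) is a single vertex: by \Cref{Lem:StronglyFullySupportedElements}\eqref{Item:Quasiline}, any two vertices supported on a quasi-line domain have common powers and are therefore equal.

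\textbf{Adjacency.} Let \(a\neq b\) be vertices of \(\Gamma^e\).
\begin{itemize}
\item If \(a\) and \(b\) commute, then by \Cref{lem:commuting factors} they either share a power, which forces \(a=b\) and is excluded, or their supporting domains are orthogonal. In the latter case \(\Phi(a)\) and \(\Phi(b)\) are adjacent.
\item Conversely, if the supports are orthogonal, then \(a\) and \(b\) commute by the commutative property, so they are adjacent in \(\Gamma^e\).
\item If \(a\) and \(b\) have the same supporting domain, they are non-adjacent on both sides, since orthogonality is anti-reflexive.
\end{itemize}
Thus \(\Phi\) is an induced embedding. It is automatically compatible with the definition of \(\Gamma^e\), because commutation in \(M\) and in \(G\) coincide.

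\textbf{Main obstacle.} The construction requires each \(xU_i\) to lie in \(\frG\), i.e.\ to be a minimal unbounded domain, since \(\EO\) only has vertices over \(\frG\). Because \(\frG\) is \(G\)-invariant, it suffices to have \(U_i\in\frG\) for every \(i\). In the intended application, the \(U_i\) arise from active domains of elements, and this need not hold automatically.

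My first attempt would be to show that the hypotheses force minimality, or to reduce to that case. For a non-minimal \(U_i\), I would pick an unbounded \(V_i\in\frG\) with \(V_i\sqsubsetneq U_i\) and a direction on \(V_i\). This choice still carries orthogonality to orthogonality, because \(V\sqsubseteq U\) and \(U\bot W\) imply \(V\bot W\). However, non-orthogonal supports could become orthogonal after shrinking. This step would need a careful choice, for example choosing \(V_i\) transverse to, or nested in, all relevant non-orthogonal domains, made \(M\)-equivariantly. I expect this to be the delicate point. If it fails, the statement should be applied only with \(U_i\in\frG\), and the factorization must then be arranged to produce such supports.
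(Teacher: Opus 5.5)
\textbf{Gap: the non-minimal case is left open.} Your map \(\Phi\) and your checks of well-definedness, injectivity and inducedness are correct when every \(U_i\) lies in \(\frG\); they coincide with the last step of the paper's proof. The lemma, however, does not assume minimality, and it cannot be restricted to that case. In \Cref{Thm:RAAGObstruction} it is applied to the intermediate RAAG of \Cref{Prop:IntermediateRAAG}, whose generators are supported on active domains of the \(\phi(v)\), and these are typically not minimal unbounded. So the step you flag as the main obstacle is the real content of the lemma, and the proposal leaves it unresolved.

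The repair you sketch, keeping \(M\) and choosing the \(V_i\) \(M\)-equivariantly, cannot work. Every vertex \(xg_i^Nx^{-1}\) of \(\Gamma^e\) realized in \(M\) has bigset \(\{xU_i\}\not\subseteq\frG\). The assignment \(xg_i^Nx^{-1}\mapsto xV_i\) is not even well defined. Conjugation by \(x=g_i^N\) fixes the vertex \(g_i^N\), but \(g_i^N\) cannot fix any \(V_i\sqsubsetneq U_i\): by equivariance it would then fix the bounded set \(\rho^{V_i}_{U_i}\), contradicting loxodromicity on \(\calC U_i\). Beyond that, you would need \(V_i\bot zV_j\iff U_i\bot zU_j\) for infinitely many \(z\in M\), which no finite choice controls.

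\textbf{How the paper handles it, in two steps.}

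\emph{Step 1 (Claim~\ref{claim:minimal_replacement}).} For non-minimal \(U_i\), pick \(T_i\in\frG\) with \(T_i\sqsubsetneq U_i\) and set \(V_i=g_i^{k_i}T_i\). Choose \(k_i\) so large that \(\rho^{V_i}_{U_i}\) is far from \(\pi_{U_i}(x_0)\) and from every \(\rho^{U_j}_{U_i}\) with \(U_j\pitchfork U_i\) or \(U_j\sqsubsetneq U_i\). When several indices share a non-minimal support \(U\), also arrange the sets \(\rho^{V_i}_U\) to be pairwise far apart. This is possible because such \(g_i\) share no powers by \Cref{Lem:ExtensionGraphPowerRigidity}, so they are geometrically irredundant and have distinct attracting points. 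The consistency bounds, \(\dist_{\calC W}(\rho^A_W,\rho^B_W)\le 2E\) for \(A\bot B\) and \(\le E\) for nested pairs, then exclude any new orthogonality in the equal, nested and transverse cases. Hence \(V_i\bot V_j\iff U_i\bot U_j\). Moreover, \(V_i=V_j\) forces \(U_i=U_j\in\frG\), and then \(\calC V_i\) is not a quasi-line.

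\emph{Step 2.} Since \(\Gamma^e\) depends only on the abstract graph \(\Gamma\), the paper discards \(M\). It chooses \(h_i\) strongly fully supported on \(V_i\), in distinct axial directions whenever \(V_i=V_j\) (\Cref{Lem:AxialDirections}), so that \(\{h_i\}\) is geometrically irredundant. Applying \Cref{Thm:RAAGEmbeddingTheorem} gives a new copy \(M'=\langle h_1^L,\dots,h_m^L\rangle\cong\bbA(\Gamma)\) with the same marked defining graph. Your \(\Phi\), built from \(M'\) rather than \(M\), then goes through verbatim, because all supports \(wV_i\) lie in \(\frG\).
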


\begin{proof}
We first record that if \(U_i=U_j\) for distinct \(i,j\), then \(g_i\) and \(g_j\) cannot have nonzero powers in common. Indeed, otherwise \(g_i^N\) and \(g_j^N\), which correspond to distinct standard generators of \(M\cong\bbA(\Gamma)\), would also have nonzero powers in common, contradicting \Cref{Lem:ExtensionGraphPowerRigidity}\eqref{Item:CommonPowersExtensionVertices}. Hence \(\{g_i,g_j\}\) is geometrically irredundant by \Cref{Lem:StronglyFullySupportedElements}\eqref{Item:TwoSFS}.
Moreover, if this common domain belongs to \(\frG\), then \(\calC U_i\) is not a quasi-line. Otherwise, \Cref{Lem:StronglyFullySupportedElements} would imply that \(g_i\) and \(g_j\) have nonzero powers in common, a contradiction.

\begin{claim}\label{claim:minimal_replacement}
For each \(i\), there exists \(V_i\in\frG\) with \(V_i\sqsubseteq U_i\) such that, for all distinct \(i,j\), we have \(V_i\bot V_j\) if and only if \(U_i\bot U_j\). Moreover, if \(V_i=V_j\) for distinct \(i,j\), then \(\calC V_i\) is not a quasi-line.
\end{claim}
\begin{proof}[Proof of the claim]
Fix a basepoint \(x_0\in G\). We construct the domains \(V_i\) as follows.
If \(U_i\in\frG\), set \(V_i=U_i\). If \(U_i\notin\frG\), by descending through unbounded domains and using finite complexity, we may choose a domain \(T_i\in\frG\) with \(T_i\sqsubsetneq U_i\), and then replace \(T_i\) by a sufficiently far translate under a power of \(g_i\).

More precisely, for each \(i\) with \(U_i\notin\frG\), let
\[I_i=\{j\mid U_j\pitchfork U_i \text{ or } U_j\sqsubsetneq U_i\}\qquad\text{and}\qquad A_i=\pi_{U_i}(x_0)\cup\bigcup_{j\in I_i}\rho^{U_j}_{U_i}.\]
Since \(I_i\) is finite and each relative projection has uniformly bounded diameter, \(\diam_{\calC U_i}(A_i)<\infty\). Let \(D=\max\left(\{E\}\cup\{\diam_{\calC U_i}(A_i)\mid U_i\notin\frG\}\right)\), where \(E\) is the hierarchy constant.
For each \(i\) with \(U_i\notin\frG\), we then set \(V_i=g_i^{k_i}T_i\), where the exponents \(k_i\) are chosen to satisfy the following requirements:
\begin{itemize}
\item 
Since \(g_i\) fixes \(U_i\) and acts loxodromically on \(\calC U_i\), equivariance gives \(\rho^{g_i^kT_i}_{U_i}=g_i^k\rho^{T_i}_{U_i}\), and these sets eventually leave every bounded subset of \(\calC U_i\). We may therefore choose \(k_i\) so that \(\dist_{\calC U_i}\bigl(\rho^{V_i}_{U_i},A_i\bigr)>10D\).
\item
If several indices have the same non-minimal support \(U\), then the corresponding elements \(g_i\) are geometrically irredundant, and hence their loxodromic actions on \(\calC U\) have pairwise disjoint limit sets. In particular, their attracting fixed points are pairwise distinct. Choosing the exponents successively, we may therefore arrange that \(\dist_{\calC U}\bigl(\rho^{V_i}_{U},\rho^{V_j}_{U}\bigr)>10D\) whenever \(U_i=U_j=U\notin\frG\) and \(i\neq j\).
\end{itemize}
Since the action of \(G\) preserves nesting and unboundedness, \(V_i\in\frG\) and \(V_i\sqsubsetneq U_i\) whenever \(U_i\notin\frG\). Thus, in all cases, \(V_i\in\frG\) and \(V_i\sqsubseteq U_i\).

If \(U_i\bot U_j\), then \(V_i\bot V_j\), since orthogonality is inherited by nested subdomains. It remains to prove the converse.

Suppose first that \(U_i=U_j\). If this common domain belongs to \(\frG\), then \(V_i=V_j=U_i\), and hence \(V_i\not\bot V_j\). Moreover, as observed before the claim,
\(\calC V_i\) cannot be a quasi-line.

Now suppose that \(U_i=U_j=U\notin\frG\). By construction, \(\dist_{\calC U}(\rho^{V_i}_{U},\rho^{V_j}_{U})>10D\).
If \(V_i\bot V_j\), then the consistency estimate for orthogonal domains (\Cref{Def:HHS}\eqref{Item:Consistency}) gives
\(\dist_{\calC U}(\rho^{V_i}_{U},\rho^{V_j}_{U})\leq 2E\), a contradiction. Thus \(V_i\not\bot V_j\).

Next suppose that \(U_i\sqsubsetneq U_j\). Then
\(U_j\notin\frG\), so \(V_j\sqsubsetneq U_j\), while \(V_i\sqsubseteq U_i\sqsubsetneq U_j\). 
By consistency for nested domains,
\(\dist_{\calC U_j}(\rho^{V_i}_{U_j},\rho^{U_i}_{U_j})\leq E\).
Since \(i\in I_j\), the choice of \(V_j\) gives
\[\dist_{\calC U_j}\bigl(\rho^{V_j}_{U_j},\rho^{U_i}_{U_j}\bigr)>10D,\quad\text{and thus,}\quad
\dist_{\calC U_j}\bigl(\rho^{V_i}_{U_j},\rho^{V_j}_{U_j}\bigr)>9D.\]
If \(V_i\bot V_j\), however, the consistency estimate for orthogonal domains gives
\(\dist_{\calC U_j}(\rho^{V_i}_{U_j},\rho^{V_j}_{U_j})\leq 2E\), contradicting \(D\geq E\). Thus \(V_i\not\bot V_j\). The case \(U_j\sqsubsetneq U_i\) is symmetric.

Finally, suppose that \(U_i\pitchfork U_j\).
We first claim that \(V_i\not\bot U_j\) and \(V_j\not\bot U_i\).
Indeed, if \(V_i=U_i\), then \(V_i\not\bot U_j\) follows immediately from \(U_i\pitchfork U_j\). Otherwise \(V_i\sqsubsetneq U_i\). If \(V_i\bot U_j\), then the consistency estimate for orthogonal domains would give \(\dist_{\calC U_i}(\rho^{V_i}_{U_i},\rho^{U_j}_{U_i})\leq 2E\).
Since \(j\in I_i\), this contradicts the choice \(\dist_{\calC U_i}(\rho^{V_i}_{U_i},\rho^{U_j}_{U_i})>10D\). Thus \(V_i\not\bot U_j\), and similarly \(V_j\not\bot U_i\).

Suppose, toward a contradiction, that \(V_i\bot V_j\). If \(V_i=U_i\) and \(V_j=U_j\), this is impossible since \(U_i\pitchfork U_j\). Thus, after interchanging \(i\) and \(j\) if necessary, we may assume that \(V_i\sqsubsetneq U_i\). Since \(V_i\bot V_j\), the consistency estimate for orthogonal domains gives \(\dist_{\calC U_i}(\rho^{V_i}_{U_i},\rho^{V_j}_{U_i})\leq 2E\). Moreover, \(V_j\sqsubseteq U_j\), \(U_j\pitchfork U_i\), and \(V_j\not\bot U_i\). Hence the consistency relation for \(V_j\sqsubseteq U_j\) and \(U_j\pitchfork U_i\) gives
\[\dist_{\calC U_i}\bigl(\rho^{V_j}_{U_i},\rho^{U_j}_{U_i}\bigr)\leq E,\quad\text{and thus,}\quad \dist_{\calC U_i} \bigl(\rho^{V_i}_{U_i},\rho^{U_j}_{U_i}\bigr)\leq 3E.\]
On the other hand, since \(j\in I_i\), the choice of \(V_i\) gives \(\dist_{\calC U_i}
(\rho^{V_i}_{U_i},\rho^{U_j}_{U_i})>10D\), a contradiction.
We have therefore proved that \(V_i\bot V_j\) if and only if \(U_i\bot U_j\) for all distinct \(i,j\).

The arguments above also show that \(V_i=V_j\) can occur only when \(U_i=U_j\in\frG\). Indeed, equality is excluded in the non-minimal equal-support case by the projection-separation condition, and in the nested and transverse cases by the corresponding projection estimates.
Hence, if \(V_i=V_j\) for distinct \(i,j\), then \(U_i=U_j\in\frG\). As observed before the claim, \(\calC V_i=\calC U_i\) is then not a quasi-line.
\end{proof}

By the final assertion of \Cref{claim:minimal_replacement}, if \(V_i=V_j\) for distinct indices \(i,j\), then \(\calC V_i\) is not a quasi-line. 
Thus, by \Cref{Lem:AxialDirections}, we may choose, for each \(i\), an axial element \(h_i\) strongly fully supported on \(V_i\) so that, whenever \(V_i=V_j\) and \(i\neq j\), the elements \(h_i\) and \(h_j\) represent distinct elements of \(\mathscr D(V_i)\). Then the collection \(h_1,\dots,h_m\) is geometrically irredundant. Indeed, if \(V_i\neq V_j\), then \(\B(h_i)=\{V_i\}\neq\{V_j\}=\B(h_j)\). If \(V_i=V_j\), then \(h_i\) and \(h_j\) represent distinct elements of \(\mathscr D(V_i)\), and hence have no nonzero powers in common. Therefore, by \Cref{Lem:StronglyFullySupportedElements}, the pair \(\{h_i,h_j\}\) is geometrically irredundant.

By \Cref{Thm:RAAGEmbeddingTheorem}, for all sufficiently large integers \(L\), the subgroup \(M'=\langle h_1^L,\dots,h_m^L\rangle\le G\) is a RAAG. Its defining graph has vertex set \(\{h_1,\dots,h_m\}\), and two vertices are adjacent if and only if the corresponding supporting domains are orthogonal. By \Cref{claim:minimal_replacement}, this defining graph is naturally identified with \(\Gamma\). Thus \(M'\) is a new copy of \(\bbA(\Gamma)\), possibly different from \(M\), but with the same marked defining graph.

We identify \(M'\) with \(\bbA(\Gamma)\) by sending the standard generator \(v_i\) to \(h_i^L\). In what follows, we regard each \(w\in\bbA(\Gamma)\) as its image in \(M'\).
Then we define $\Phi:V(\Gamma^e)\rightarrow V(\EO)$ by $\Phi(v_i^w)=\iota_{wV_i}\left(\left[wh_i^Lw^{-1}\right]\right)$, where \(\left[wh_i^Lw^{-1}\right]\in\mathscr D(wV_i)\) denotes the axial direction represented by \(wh_i^Lw^{-1}\). Indeed, \(wh_i^Lw^{-1}\) is strongly fully supported on \(wV_i\).

We first show that \(\Phi\) is well-defined. Suppose that \(v_i^w=v_j^{w'}\) as vertices of \(\Gamma^e\). Then the corresponding conjugates of standard generators are equal in \(M'\), and hence $wh_i^Lw^{-1}=w'h_j^L(w')^{-1}$ in \(G\). Therefore the two expressions determine the same domain and the same element of the corresponding set of axial directions. Thus \(\Phi\) is well-defined.

We claim that \(\Phi\) is injective. Suppose that $\Phi(v_i^w)=\Phi(v_j^{w'})$.
Then \(wV_i=w'V_j\), and the elements $wh_i^Lw^{-1}$ and $w'h_j^L(w')^{-1}$ have nonzero powers in common. By \Cref{Lem:ExtensionGraphPowerRigidity}\eqref{Item:CommonPowersExtensionVertices}, the corresponding conjugates of standard generators in \(M'\cong\bbA(\Gamma)\) represent the same vertex of \(\Gamma^e\). Hence $v_i^w=v_j^{w'}$.
Thus \(\Phi\) is injective.

It remains to show that \(\Phi\) is induced. Let $x=v_i^w$ and $y=v_j^{w'}$ be distinct vertices of \(\Gamma^e\), and write $W=wV_i$ and $W'=w'V_j$.
Suppose first that \(x\) and \(y\) are adjacent in \(\Gamma^e\). Then $wh_i^Lw^{-1}$ and $w'h_j^L(w')^{-1}$ commute in \(M'\), and hence in \(G\). By \Cref{lem:commuting factors}, either these two elements have nonzero powers in common or \(W\bot W'\). The first possibility would imply \(x=y\) by \Cref{Lem:ExtensionGraphPowerRigidity}\eqref{Item:CommonPowersExtensionVertices}, contrary to assumption. Hence \(W\bot W'\), and therefore \(\Phi(x)\) and \(\Phi(y)\) are adjacent in \(\EO\).
Conversely, suppose that \(\Phi(x)\) and \(\Phi(y)\) are adjacent in \(\EO\). Then \(W\bot W'\). Since \(wh_i^Lw^{-1}\in G_W\) and \(w'h_j^L(w')^{-1}\in G_{W'}\), the commutative property gives 
\[[wh_i^Lw^{-1},\,w'h_j^L(w')^{-1}]=1.\]
Since both elements lie in the subgroup \(M'\), they commute in \(M'\) as well. Therefore the corresponding vertices \(x\) and \(y\) are adjacent in \(\Gamma^e\), which completes the proof of the lemma.
\end{proof}

We next remove repetitions coming from common powers. This allows us to pass from the axial factors appearing in orthogonal decompositions to a finite geometrically irredundant collection, to which the RAAG embedding theorem can be applied.

\begin{lemma}\label{Lem:FiniteCommonPowerReduction}
Let \((G,\frS)\) satisfy the \(\bfF_U\) stabilizers property. Let \(U\in\frS\) be an unbounded domain, and let \(\mathcal H\) be a finite collection of axial elements, all strongly fully supported on \(U\). Then there exists a finite geometrically irredundant subcollection \(\mathcal A=\{\alpha_1,\dots,\alpha_r\}\subseteq\mathcal H\) such that, for every \(h\in\mathcal H\), there exist a positive integer \(p_h\), a nonzero integer \(q_h\), and some \(\alpha_j\in\mathcal A\) satisfying $h^{p_h}=\alpha_j^{q_h}$.
\end{lemma}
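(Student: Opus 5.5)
The plan is to partition \(\mathcal H\) by the relation \(\sim\) of \Cref{Def:AxialDirections}, namely having nonzero powers in common. Then we pick one representative from each class.

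First, we check that \(\sim\) is an equivalence relation on axial elements. Reflexivity and symmetry are immediate. For transitivity, suppose \(f^{a}=g^{b}\) and \(g^{c}=h^{d}\) with \(a,b,c,d\neq0\). Then
\[f^{ac}=g^{bc}=h^{bd},\]
and the exponents \(ac\) and \(bd\) are nonzero. So \(f\sim h\).

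Since \(\mathcal H\) is finite, it splits into finitely many classes. Choose one representative \(\alpha_j\) from each class and set \(\mathcal A=\{\alpha_1,\dots,\alpha_r\}\subseteq\mathcal H\).

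Next, we verify geometric irredundance. For distinct \(\alpha_i,\alpha_j\in\mathcal A\), both are axial and strongly fully supported on \(U\), and they have no nonzero powers in common by construction. By \Cref{Lem:StronglyFullySupportedElements}\eqref{Item:TwoSFS}, the pair \(\{\alpha_i,\alpha_j\}\) is geometrically irredundant. Geometric irredundance of a collection is defined pairwise in \Cref{Def:FSE}, so \(\mathcal A\) is geometrically irredundant.

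Finally, we normalize the signs. For \(h\in\mathcal H\), let \(\alpha_j\) be the representative of its class, so \(h^{a}=\alpha_j^{b}\) with \(a,b\neq0\). If \(a<0\), invert both sides to get \(h^{-a}=\alpha_j^{-b}\). Then \(p_h=|a|>0\) and \(q_h=\pm b\neq0\) have the required form.

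There is no serious obstacle. The only substantive input is the dichotomy in \Cref{Lem:StronglyFullySupportedElements}\eqref{Item:TwoSFS}, which is where the \(\mathbf F_U\) stabilizers property (via acylindricity of \(G_U\curvearrowright\calC U\)) is used.
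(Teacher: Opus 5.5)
Your proposal is correct and matches the paper's proof: pick one representative from each \(\sim\)-class of \(\mathcal H\), then use \Cref{Lem:StronglyFullySupportedElements}\eqref{Item:TwoSFS} to get pairwise geometric irredundance of the representatives. You also check that \(\sim\) is transitive and that the exponent \(p_h\) can be taken positive; the paper leaves both points implicit.
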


\begin{proof}
Choose one representative \(\alpha_\delta\in\mathcal H\) for each \(\delta\in\mathcal H/\sim\), and let \(\mathcal A=\{\alpha_\delta\mid\delta\in\mathcal H/\sim\}\).
For every \(h\in\mathcal H\), the element \(h\) has nonzero powers in common with the representative of its class, so there exist \(p_h>0\) and \(q_h\neq0\) such that \(h^{p_h}=\alpha_j^{q_h}\) for some \(\alpha_j\in\mathcal A\).
Distinct elements of \(\mathcal A\) represent distinct axial directions and hence have no nonzero powers in common. Since they are all strongly fully supported on \(U\), \Cref{Lem:StronglyFullySupportedElements}\eqref{Item:TwoSFS} implies that \(\mathcal A\) is geometrically irredundant.
\end{proof}

\begin{theorem}[Intermediate RAAG]\label{Prop:IntermediateRAAG}
Let \(G\) be virtually in \(\Xi\), via a virtual HHG structure \((H,\frS)\), and let \(\phi\colon\bbA(\Lambda)\to G\) be an injective homomorphism.
Then there exist a positive integer \(N\) and a quasi-isometrically embedded RAAG subgroup \(M\le G\), generated by strongly fully supported axial elements with respect to \((H,\frS)\), such that the assignment \(v\mapsto\phi(v)^N\) extends to an injective homomorphism \(\bbA(\Lambda)\rightarrow M\).
\end{theorem}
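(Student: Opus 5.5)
First reduce to the case \(G=H\). By \Cref{Lem:PowerEmbeddingRAAG}, there are positive integers \(n_v\) such that \(v\mapsto\phi(v)^{n_v}\) extends to an injective homomorphism \(\phi'\colon\bbA(\Lambda)\to H\). Suppose we find \(N'\) and \(M\le H\) that work for \(\phi'\). Then \(N=N'\cdot\operatorname{lcm}_v n_v\) works for \(\phi\): by the first part of \Cref{Lem:PowerEmbeddingRAAG}, composing with a further power map \(v\mapsto v^{m_v}\) preserves injectivity. Also, \(M\) is undistorted in \(H\), hence in \(G\), since \(H\) has finite index in \(G\). So we assume \((G,\frS)\in\Xi\).

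The generators \(g_v=\phi(v)\) have infinite order, because RAAGs are torsion-free. By the orthogonal decomposition property there are \(n\) and factors with \(g_v^{n}=h_{v,1}\cdots h_{v,k_v}\), where \(h_{v,i}\) is strongly fully supported on \(U_{v,i}\) and \(\B(g_v)=\{U_{v,i}\}_i\). Since \(\B(g_v)\) is pairwise orthogonal, the commutative property shows these factors commute. Let \(\mathcal H\) be the finite set of all factors \(h_{v,i}\), and group them by supporting domain. For each domain \(U\), \Cref{Lem:FiniteCommonPowerReduction} gives representatives \(\alpha\). Across different domains the \(\B\)'s differ, so the union \(\mathcal A\) of the representatives is geometrically irredundant. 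Each \(h\in\mathcal H\) satisfies \(h^{p_h}=\alpha^{q_h}\). Let \(P\) be a common multiple of the \(p_h\). Then every \(h^{P}\) is a power of some \(\alpha\in\mathcal A\). By \Cref{Thm:RAAGEmbeddingTheorem}, for all large \(d\) the subgroup \(M=\langle\alpha^d:\alpha\in\mathcal A\rangle\) is an undistorted copy of \(\bbA(\Gamma)\), where \(\Gamma\) is the orthogonality graph of the supports.

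Choose \(d\) divisible by every \(|q_h|\) after a further enlargement of \(P\). Concretely, take \(d\) large and replace \(P\) by \(Pd\), so that each \(h^{P}\) lies in \(M\) as a power of a standard generator. Since the factors \(h_{v,i}\) commute for fixed \(v\), we get \(g_v^{nP}=\prod_i h_{v,i}^{P}\in M\). Set \(N=nP\) and \(\psi(v)=g_v^N\). Commutation relations in \(\Lambda\) are preserved, so \(\psi\) is a homomorphism \(\bbA(\Lambda)\to M\). It is also injective, because it equals the composite of the power map \(v\mapsto v^N\), which is injective by \Cref{Lem:PowerEmbeddingRAAG}, with the injective \(\phi\).

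I expect the main obstacle to be the bookkeeping of exponents. A single \(d\) must make \(\langle\alpha^d\rangle\) a RAAG by \Cref{Thm:RAAGEmbeddingTheorem}, and it must also contain every \(h^{P}\). My first attempt is to fix \(d\ge D\) first. Then, using \(h^{p_h}=\alpha^{q_h}\), I get \(h^{p_h d}=\alpha^{q_h d}=(\alpha^d)^{q_h}\in M\). So taking \(P=d\cdot\operatorname{lcm}_h p_h\) suffices, with no divisibility condition needed on \(d\). With this choice the exponent issue disappears, and the rest is formal. Injectivity never depends on the structure of \(M\), only on \(\phi\) being injective.
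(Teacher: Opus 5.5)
Your proposal is correct and follows essentially the same route as the paper: pass to the finite-index subgroup, decompose a power of each generator into commuting strongly fully supported factors, prune by common powers via \Cref{Lem:FiniteCommonPowerReduction}, apply \Cref{Thm:RAAGEmbeddingTheorem} with a fixed large exponent $d$, and take $P=d\cdot\operatorname{lcm}_h p_h$, which is exactly the paper's choice $Q=L\operatorname{lcm}\{p_{v,s}\}$. The differences are cosmetic. You use generator-dependent exponents $n_v$ in the reduction to $H$ rather than a uniform one. Your intermediate remark about choosing $d$ divisible by the $|q_h|$ is unnecessary, as your own final paragraph observes.
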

\begin{proof}
By \Cref{Lem:PowerEmbeddingRAAG}, for each \(v\in V(\Lambda)\), some positive power of \(\phi(v)\) belongs to \(H\). Since \(V(\Lambda)\) is finite, there exists a positive integer \(a\) such that \(\phi(v)^a\in H\) for every \(v\in V(\Lambda)\). The assignment \(v\mapsto\phi(v)^a\) extends to an injective homomorphism \(\bbA(\Lambda)\rightarrow H\).
Replacing \(\phi\) by this homomorphism, we may therefore assume that \(\phi\colon\bbA(\Lambda)\rightarrow H\).

For each \(v\in V(\Lambda)\), write \(\B(\phi(v))=\{U_{v,1},\dots,U_{v,k_v}\}\).
By the orthogonal decomposition property, there exists \(n_v>0\) such that \(\phi(v)^{n_v}=h_{v,1}\cdots h_{v,k_v}\), where \(h_{v,s}\) is strongly fully supported on \(U_{v,s}\) for every \(s\). 
The domains \(U_{v,1},\dots,U_{v,k_v}\) are pairwise orthogonal. Hence, by the commutative property, the elements \(h_{v,1},\dots,h_{v,k_v}\) pairwise commute.
Since \(V(\Lambda)\) is finite, we may choose a common positive multiple \(n\) of the integers \(n_v\). Replacing each \(h_{v,s}\) by \(h_{v,s}^{n/n_v}\), we may assume that \(\phi(v)^n=h_{v,1}\cdots h_{v,k_v}\) for every \(v\in V(\Lambda)\), where each \(h_{v,s}\) remains strongly fully supported on \(U_{v,s}\).

Let \(\mathcal H\) be the finite collection of all elements \(h_{v,s}\), as \(v\) ranges over \(V(\Lambda)\) and \(1\leq s\leq k_v\). For each unbounded domain \(U\) occurring as the support of an element of \(\mathcal H\), let
\[\mathcal H_U=\{h\in\mathcal H\mid h\text{ is strongly fully supported on }U\}.\]
Applying \Cref{Lem:FiniteCommonPowerReduction} to each nonempty \(\mathcal H_U\) and taking the union of the resulting subcollections, we obtain a finite geometrically irredundant collection \(\mathcal A=\{\alpha_1,\dots,\alpha_r\}\subseteq\mathcal H\) such that, for each \(h_{v,s}\), there exist a positive integer \(p_{v,s}\), a nonzero integer \(q_{v,s}\), and an index \(j(v,s)\in\{1,\dots,r\}\) satisfying \(h_{v,s}^{p_{v,s}}=\alpha_{j(v,s)}^{q_{v,s}}\).
Indeed, elements chosen from different \(\mathcal H_U\)'s have different supporting domains, while elements chosen from the same \(\mathcal H_U\) are geometrically irredundant by \Cref{Lem:FiniteCommonPowerReduction}.

By \Cref{Thm:RAAGEmbeddingTheorem}, applied to \((H,\frS)\), for all sufficiently large integers \(L\), the subgroup 
\[M=\langle\alpha_1^L,\dots,\alpha_r^L\rangle\le H\]
is a RAAG such that the inclusion \(M\hookrightarrow H\) is a quasi-isometric embedding; we fix such an \(L\).
Since \(H\) has finite index in \(G\), the inclusion \(H\hookrightarrow G\) is a quasi-isometry. Hence \(M\hookrightarrow G\) is also a quasi-isometric embedding.

Since the collection of pairs \((v,s)\) is finite, we may choose a positive integer \(Q\) such that 
\[p_{v,s}\mid Q \qquad\text{and}\qquad L\mid \frac{Q}{p_{v,s}}\]
for every \(v\) and \(s\). For instance, one may take \(Q=L\operatorname{lcm}\{p_{v,s}\mid v\in V(\Lambda),\ 1\leq s\leq k_v\}\).
Then
\[h_{v,s}^{Q}=\left(h_{v,s}^{p_{v,s}}\right)^{Q/p_{v,s}}=\alpha_{j(v,s)}^{q_{v,s}Q/p_{v,s}} \in M.\]
Since the elements \(h_{v,1},\dots,h_{v,k_v}\) commute, for every \(v\in V(\Lambda)\), we obtain
\[\phi(v)^{nQ}=\left(h_{v,1}\cdots h_{v,k_v}\right)^Q=h_{v,1}^Q\cdots h_{v,k_v}^Q \in M.\]
Thus, for the modified embedding, the assignment \(v\mapsto\phi(v)^{nQ}\) defines an injective homomorphism into \(M\). Returning to the original embedding, this is precisely the assignment \(v\longmapsto\phi(v)^{anQ}\).
Setting \(N=anQ\) proves the claim.
\end{proof}

Equivalently, let \(\rho_1\colon\bbA(\Lambda)\rightarrow\bbA(\Lambda)\) be the positive-power embedding given by \(\rho_1(v)=v^N\) for \(v\in V(\Lambda)\), and let \(\rho_2\colon\bbA(\Gamma)\rightarrow H\) identify \(\bbA(\Gamma)\) with the intermediate RAAG \(M\le H\).
Since \((\phi\circ\rho_1)(\bbA(\Lambda))\le M\), there is an injective homomorphism \(\bar\phi\colon\bbA(\Lambda)\rightarrow\bbA(\Gamma)\) such that \(\rho_2\circ\bar\phi=\phi\circ\rho_1\). Thus \Cref{Prop:IntermediateRAAG} gives the commutative diagram
\[\begin{tikzcd}[row sep=0pc]
\bbA(\Lambda)\ar[r, "\bar\phi"] \ar[rrr, bend right=18, "\phi\circ\rho_1"']
& \bbA(\Gamma) \ar[r, "\rho_2"]
& H \ar[r, hook, "\iota"]
& G.
\end{tikzcd}\]
Here \(\iota\) is the inclusion. The maps \(\rho_1\) and \(\rho_2\) are quasi-isometric embeddings, while \(\iota\) is a quasi-isometry since \(H\) has finite index in \(G\). In particular, the composite \(\iota\circ\rho_2\colon\bbA(\Gamma)\rightarrow G\) is a quasi-isometric embedding.

\begin{question}
In the factorization above, can the intermediate homomorphism \(\bar\phi\colon\bbA(\Lambda)\rightarrow\bbA(\Gamma)\) be chosen to be a quasi-isometric embedding?
\end{question}

\section{RAAG embedding obstructions}\label{Section:Criterion}
In this section, we use the intermediate RAAG factorization to derive combinatorial obstructions to RAAG embeddings and prove the rank-two criterion. We then introduce the alternative algebraic class \(\Omega\), apply the framework to mapping class groups and RAAGs, and derive a chromatic obstruction when the expanded core graph has finite chromatic number.

Throughout this section, whenever a group \(G\) is assumed to be virtually in \(\Xi\) or in \(\Omega\), a witnessing virtual HHG structure \((H,\frS)\) is understood to be fixed. We write \(\EOW\) for the expanded core graph associated to \((H,\frS)\).

\subsection{The obstruction theorem and sharp cases}
The intermediate-RAAG factorization allows us to transfer embedding obstructions for RAAGs to the ambient HHG through the expanded core graph. Indeed, the intermediate RAAGs produced above have extension graphs that embed into \(\EO\). We first combine this observation with the Kim--Koberda obstruction to obtain a general obstruction in the clique graph of \(\EO\), and then record cases in which the clique graph can be avoided.

\begin{theorem}[RAAG obstruction]\label{Thm:RAAGObstruction}
Let \(G\) be virtually in \(\Xi\), and let \(\Lambda\) be a finite graph. If \(\bbA(\Lambda)\le G\), then \(\Lambda\le (\EOW)_k\).
\end{theorem}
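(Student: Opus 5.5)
The plan is to combine the intermediate RAAG factorization with the extension-graph embedding and the Kim--Koberda clique-graph obstruction. Fix an injective homomorphism \(\phi\colon\bbA(\Lambda)\to G\). By \Cref{Prop:IntermediateRAAG}, applied to the fixed virtual HHG structure \((H,\frS)\in\Xi\), there are a positive integer \(N\) and a subgroup \(M=\langle\alpha_1^L,\dots,\alpha_r^L\rangle\le H\) such that \(v\mapsto\phi(v)^N\) extends to an injective homomorphism \(\bar\phi\colon\bbA(\Lambda)\to M\). Here \(\alpha_1,\dots,\alpha_r\) is a geometrically irredundant collection of axial elements, strongly fully supported on unbounded domains \(U_1,\dots,U_r\). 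The proof of that theorem obtains \(M\) from \Cref{Thm:RAAGEmbeddingTheorem}. Hence \(M\cong\bbA(\Gamma)\), where \(\Gamma\) has vertices \(v_1,\dots,v_r\) corresponding to the generators \(\alpha_j^L\), and \(v_i\) is adjacent to \(v_j\) exactly when \(U_i\bot U_j\).

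First, this is precisely the hypothesis of \Cref{lem:intermediate RAAG} with \(m=r\), \(g_i=\alpha_i\) and exponent \(L\). That lemma gives \(\Gamma^e\le\EOW\), where \(\EOW\) is the expanded core graph of \((H,\frS)\). Second, \(\bar\phi\) shows \(\bbA(\Lambda)\le\bbA(\Gamma)\), so \Cref{Thm:KK RAAG emb Thm} yields \(\Lambda\le(\Gamma^e)_k\).

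Third, I need the monotonicity of the clique graph under induced subgraphs: if \(\Delta\le\Theta\) via an induced embedding \(\iota\), then \(\Delta_k\le\Theta_k\).
\begin{itemize}
\item Send a clique \(\sigma\) of \(\Delta\) to \(\iota(\sigma)\). This is a clique of \(\Theta\), and the map is injective.
\item If \(\sigma,\tau\) lie in a common clique of \(\Delta\), their images lie in the image of that clique, which is a clique of \(\Theta\).
\item Conversely, if \(\iota(\sigma)\cup\iota(\tau)\) is contained in a clique of \(\Theta\), then \(\iota(\sigma\cup\tau)\) is complete. Since \(\iota\) is induced, \(\sigma\cup\tau\) is a clique of \(\Delta\).
\end{itemize}
Hence adjacency is preserved and reflected. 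This makes sense for possibly infinite graphs, since cliques in the Kim--Koberda sense are finite complete subgraphs. Applying it to \(\Gamma^e\le\EOW\) and composing with \(\Lambda\le(\Gamma^e)_k\) gives \(\Lambda\le(\EOW)_k\).

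The main point to check is that the intermediate RAAG from \Cref{Prop:IntermediateRAAG} really has the marked form required by \Cref{lem:intermediate RAAG}. Specifically, the generators must be powers of strongly fully supported axial elements, with defining graph given exactly by orthogonality of the supports. This is not spelled out in the statement of \Cref{Prop:IntermediateRAAG}, but it follows from its proof via \Cref{Thm:RAAGEmbeddingTheorem}. The remaining steps are formal, apart from keeping the clique convention consistent with \cite{KK13}.
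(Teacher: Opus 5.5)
Your proposal is correct and follows the paper's own proof: factor through the intermediate RAAG $M\cong\bbA(\Gamma)\le H$ from \Cref{Prop:IntermediateRAAG}, apply \Cref{Thm:KK RAAG emb Thm} to get $\Lambda\le(\Gamma^e)_k$, and transfer this into $(\EOW)_k$ via \Cref{lem:intermediate RAAG}. Your two extra checks are correct. The first is that $M$ has the marked form $\langle\alpha_1^L,\dots,\alpha_r^L\rangle$ required by \Cref{lem:intermediate RAAG}, which follows from the proof of \Cref{Prop:IntermediateRAAG} through \Cref{Thm:RAAGEmbeddingTheorem}. The second is that induced embeddings pass to clique graphs. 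The paper uses both facts without spelling them out.
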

\begin{proof}
Let \((H,\frS)\in\Xi\) be a virtual HHG structure on \(G\).
Suppose that $\phi:\bbA(\Lambda)\to G$ is an injective homomorphism. By \Cref{Prop:IntermediateRAAG}, after replacing the standard generators of \(\bbA(\Lambda)\) by suitable positive powers, we obtain an embedding of \(\bbA(\Lambda)\) into a RAAG subgroup \(M\le H\) generated by axial elements strongly fully supported on unbounded domains. 
Let \(\Gamma\) be the defining graph of \(M\). By the Kim--Koberda obstruction \Cref{Thm:KK RAAG emb Thm}, we have $\Lambda\le (\Gamma^e)_k$.
By \Cref{lem:intermediate RAAG}, the extension graph \(\Gamma^e\) embeds as an induced subgraph of \(\EO\). Hence its clique graph \((\Gamma^e)_k\) embeds as an induced subgraph of \(\EO_k\). Therefore \(\Lambda\le \EO_k\).
\end{proof}

The obstruction theorem gives an obstruction in terms of the clique graph \(\EO_k\). We now record cases in which this improves to an obstruction in \(\EO\) itself.

\begin{corollary}\label{Cor:PositiveCases}
Let \(G\) be virtually in \(\Xi\). Let \(\Lambda\) be a finite graph such that
\(\bbA(\Lambda)\le \bbA(\Gamma)\) implies \(\Lambda\le \Gamma^e\) for every finite graph \(\Gamma\).
If \(\bbA(\Lambda)\le G\), then \(\Lambda\le \EO\).

In particular, the conclusion holds when \(\Lambda\) is a forest or the complement of a linear forest.
\end{corollary}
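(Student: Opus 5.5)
The argument follows the proof of \Cref{Thm:RAAGObstruction}, with the clique-graph step replaced by the hypothesis on \(\Lambda\). Fix a virtual HHG structure \((H,\frS)\in\Xi\) on \(G\), and let \(\phi\colon\bbA(\Lambda)\to G\) be injective.

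First apply \Cref{Prop:IntermediateRAAG}. This gives a positive integer \(N\) and a RAAG subgroup \(M\le H\), generated by strongly fully supported axial elements, such that \(v\mapsto\phi(v)^N\) extends to an injective homomorphism \(\bbA(\Lambda)\to M\). The construction in that proof makes \(M\) explicit: it is \(\langle\alpha_1^L,\dots,\alpha_r^L\rangle\), where the \(\alpha_j\) are strongly fully supported on domains \(U_j\). By \Cref{Thm:RAAGEmbeddingTheorem}, \(M\cong\bbA(\Gamma)\), where \(\Gamma\) has one vertex for each \(\alpha_j^L\) and two vertices are adjacent exactly when the supporting domains are orthogonal. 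In particular \(\bbA(\Lambda)\le\bbA(\Gamma)\), and \(\Gamma\) is a finite graph.

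By the hypothesis on \(\Lambda\), this gives \(\Lambda\le\Gamma^e\). The marked data of \(M\) satisfy the hypotheses of \Cref{lem:intermediate RAAG} with the elements \(g_i=\alpha_i\) and the exponent \(L\). That lemma therefore gives an induced embedding \(\Gamma^e\le\EOW\). Composing the two induced embeddings yields \(\Lambda\le\EOW\).

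For the final sentence, one only needs to check that forests and complements of linear forests satisfy the hypothesis. This is exactly \Cref{Thm:EquivalenceforEmbedding}: item (1) covers forests, for arbitrary \(\Gamma\), and item (2) covers complements of linear forests, also for arbitrary \(\Gamma\).

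The only point requiring care is matching the output of \Cref{Prop:IntermediateRAAG} to the hypotheses of \Cref{lem:intermediate RAAG}. The lemma needs the marking in which each vertex of \(\Gamma\) corresponds to a generator \(\alpha_i^L\), with adjacency given by orthogonality of supports. This marking is supplied by \Cref{Thm:RAAGEmbeddingTheorem}, which was applied to the geometrically irredundant collection \(\mathcal A\) in the proof of \Cref{Prop:IntermediateRAAG}. It is the same step already used in the proof of \Cref{Thm:RAAGObstruction}, so I expect no new obstacle.
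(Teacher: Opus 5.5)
Your proposal is correct and follows the paper's proof essentially verbatim: you apply \Cref{Prop:IntermediateRAAG} to get \(\bbA(\Lambda)\le M\cong\bbA(\Gamma)\le H\), use the hypothesis on \(\Lambda\) to get \(\Lambda\le\Gamma^e\), embed \(\Gamma^e\) into \(\EO\) by \Cref{lem:intermediate RAAG}, and cite \Cref{Thm:EquivalenceforEmbedding} for the final sentence. You also check explicitly that the marking of \(M\) from the proof of \Cref{Prop:IntermediateRAAG} meets the hypotheses of \Cref{lem:intermediate RAAG}, which the paper leaves implicit.
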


\begin{proof}
Let \((H,\frS)\in\Xi\) be a virtual HHG structure on \(G\).
By \Cref{Prop:IntermediateRAAG}, we obtain an embedding \(\bbA(\Lambda)\le M\cong\bbA(\Gamma)\le H\) into an intermediate RAAG \(M\). By the assumption on \(\Lambda\), we have \(\Lambda\le \Gamma^e\). By \Cref{lem:intermediate RAAG}, the extension graph \(\Gamma^e\) embeds as an induced subgraph of \(\EO\). Hence \(\Lambda\le \EO\).

The final statement follows from \Cref{Thm:EquivalenceforEmbedding}.
\end{proof}



The most important sharpness result for our applications is the rank-two case. Recall that the \emph{rank} of an HHS \((\mathcal X,\frS)\) is the maximal cardinality of a collection of pairwise orthogonal unbounded domains, or equivalently, the maximal clique size in the orthogonality graph $\mathcal{O}^{\frS}$ mentioned in \Cref{Rmk:VariantsofCoregraph}.

\begin{theorem}\label{Thm:RankTwoCriterionHHG}
Let \(G\) be virtually in \(\Xi\) via a virtual HHG structure of rank at most \(2\). Then, for every finite graph \(\Lambda\),
\[\mathbb A(\Lambda)\le G \quad\Longleftrightarrow\quad \Lambda\le \EO. \]
\end{theorem}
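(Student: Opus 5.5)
The proof splits into the two implications. We fix a virtual HHG structure \((H,\frS)\in\Xi\) of rank at most \(2\) on \(G\), and write \(\EO\) for its expanded core graph.

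\emph{Forward direction.} Suppose \(\bbA(\Lambda)\le G\).
\begin{enumerate}
\item By \Cref{Prop:IntermediateRAAG}, after passing to positive powers of the standard generators we get \(\bbA(\Lambda)\le M\le H\). Here \(M\cong\bbA(\Gamma)\) is generated by powers of axial elements strongly fully supported on domains \(U_1,\dots,U_r\).
\item By \Cref{Thm:RAAGEmbeddingTheorem}, the vertices \(v_i\) and \(v_j\) of \(\Gamma\) are adjacent exactly when \(U_i\bot U_j\). The relation \(i\neq j\) is automatic, since \(\bot\) is anti-reflexive.
\item \(\Gamma\) is triangle-free. Indeed, a triangle would give three pairwise orthogonal domains. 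These are distinct, again because \(\bot\) is anti-reflexive. Each is unbounded, since it supports an axial element. This contradicts rank at most \(2\).
\item By \Cref{Thm:EquivalenceforEmbedding}(1), triangle-freeness of \(\Gamma\) upgrades \(\bbA(\Lambda)\le\bbA(\Gamma)\) to \(\Lambda\le\Gamma^e\).
\item By \Cref{lem:intermediate RAAG}, \(\Gamma^e\le\EO\), and hence \(\Lambda\le\EO\).
\end{enumerate}
This is the argument of \Cref{Cor:PositiveCases}, except that the hypothesis on \(\Lambda\) is replaced by the triangle-freeness of the intermediate defining graph.

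\emph{Converse direction.} Suppose \(\Lambda\le\EO\) is finite, with vertices \(x_1,\dots,x_m\in\EO\).
\begin{enumerate}
\item Each \(x_i\) lies in some \(V_{W_i}\) with \(W_i\in\frG\). Through \(\iota_{W_i}\) from \Cref{Lem:AxialDirections}, it corresponds to an axial direction on \(W_i\).
\item Choose a representative \(g_i\), an axial element strongly fully supported on \(W_i\).
\item Distinct vertices with \(W_i=W_j\) give distinct directions, so \(g_i\) and \(g_j\) share no nonzero powers. By \Cref{Lem:StronglyFullySupportedElements}\eqref{Item:TwoSFS}, the pair \(\{g_i,g_j\}\) is geometrically irredundant. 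Distinct domains give distinct bigsets. Hence \(\{g_1,\dots,g_m\}\) is geometrically irredundant.
\item \Cref{Thm:RAAGEmbeddingTheorem} gives \(\langle g_1^d,\dots,g_m^d\rangle\cong\bbA(\Gamma')\), where \(v_i\sim v_j\) if and only if \(W_i\bot W_j\).
\item By \Cref{Def:ExpandedCoreGraph}, \(W_i\bot W_j\) is exactly adjacency of \(x_i\) and \(x_j\) in \(\EO\). Since \(\Lambda\) is induced, \(\Gamma'\cong\Lambda\).
\end{enumerate}
Thus \(\bbA(\Lambda)\le H\le G\). This direction needs no rank hypothesis.

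\emph{Main obstacle.} The subtle step is the bookkeeping for vertices of \(\EO\) lying over the same domain, namely ensuring geometric irredundance. The definition of \(\EO\) fixes \(V_W\) only abstractly, so we must use the bijections \(\iota_W\) of \Cref{Lem:AxialDirections} to convert distinct vertices into non-commensurable elements. We should also note that distinct vertices over the same \(W\) are non-adjacent, because \(W\not\bot W\). This matches the absence of an edge between the corresponding non-commuting generators.
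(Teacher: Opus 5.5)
Your proposal is correct and follows the paper's proof essentially step for step. For the forward direction, you factor through the intermediate RAAG, use rank at most $2$ to get a triangle-free defining graph, apply the Kim--Koberda triangle-free criterion, and then embed $\Gamma^e$ into the expanded core graph. For the converse, which needs no rank hypothesis, you pick strongly fully supported representatives of the axial directions, check geometric irredundance, and apply the undistorted RAAG embedding theorem in $(H,\frS)$.
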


\begin{proof}
Let \((H,\frS)\in\Xi\) be a virtual HHG structure on \(G\).
Suppose first that \(\Lambda\le \EO\). For each vertex \(v\in V(\Lambda)\), let \(x_v\in V(\EO)\) denote the corresponding vertex of the induced copy of \(\Lambda\). There is a unique minimal unbounded domain \(U_v\in\frG\) such that \(x_v\in V_{U_v}\).
Using the bijection \(\iota_{U_v}\colon\mathscr D(U_v)\rightarrow V_{U_v}\) from \Cref{Lem:AxialDirections}, choose an axial element \(g_v\) strongly fully supported on \(U_v\) whose axial direction corresponds to \(x_v\).

The collection \(\{g_v\mid v\in V(\Lambda)\}\) is geometrically irredundant. Indeed, if \(U_v\neq U_w\), then the two elements have different bigsets. If \(U_v=U_w\), then \(x_v\neq x_w\) implies that \(g_v\) and \(g_w\) represent distinct axial directions on \(U_v\), and hence have no nonzero powers in common. Thus \Cref{Lem:StronglyFullySupportedElements}\eqref{Item:TwoSFS} implies that the pair \(\{g_v,g_w\}\) is geometrically irredundant.

Moreover, by the definition of \(\EO\), for distinct vertices \(v,w\), \(\{v,w\}\in E(\Lambda)\) if and only if \(U_v\bot U_w\). Therefore, by \Cref{Thm:RAAGEmbeddingTheorem}, applied to \((H,\frS)\), sufficiently large common powers of the elements \(g_v\) generate a subgroup of \(H\), and hence of \(G\), isomorphic to \(\bbA(\Lambda)\). Thus \(\bbA(\Lambda)\le G\).

Conversely, suppose that \(\bbA(\Lambda)\le G\). By \Cref{Prop:IntermediateRAAG}, after replacing the standard generators of \(\bbA(\Lambda)\) by suitable positive powers, we obtain an embedding \(\bbA(\Lambda)\le M\), where \(M\cong\bbA(\Gamma)\le H\) is an intermediate RAAG generated by axial elements strongly fully supported on unbounded domains.

Since \((H,\frS)\) has rank at most \(2\), the defining graph \(\Gamma\) is triangle-free: a triangle in \(\Gamma\) would correspond to three pairwise orthogonal unbounded supporting domains. Hence \Cref{Thm:EquivalenceforEmbedding} gives \(\Lambda\le \Gamma^e\). 
By \Cref{lem:intermediate RAAG}, the extension graph \(\Gamma^e\) embeds as an induced subgraph of \(\EO\). Therefore \(\Lambda\le \EO\).
\end{proof}

\subsection{An alternative algebraic framework and examples}\label{Subsection:WeakerHypotheses_and_Examples}

We next record two complementary points about the expanded core graph. First, the purely algebraic embedding obstructions can be obtained under an alternative set of axioms formulated in terms of fully supported axial elements. Second, for rich-family HHG structures on a RAAG, the expanded core graph embeds naturally as an induced subgraph of the extension graph; when the rich family contains all singleton subgraphs, this embedding is an isomorphism.

\subsubsection{An alternative axiomatic framework}

In proving the obstruction results above for HHG structures in \(\Xi\), we used two features of the \(\Xi\)-structure. First, it provides strongly fully supported axial elements with the required existence, irredundance, decomposition, and commutation properties.
Second, the bounded-orbit condition in \Cref{Prop:SupportedAxialRAAGs} ensures that the resulting intermediate RAAG is quasi-isometrically embedded.

For the purely algebraic obstruction, the bounded-orbit condition is unnecessary. The algebraic conclusion of \Cref{Prop:SupportedAxialRAAGs} requires only that suitable powers of elements supported on orthogonal domains commute. We may therefore work with fully supported elements and impose directly the supply, decomposition, and commutation properties needed in the argument. 
This leads to the following class \(\Omega\), designed in particular to include mapping class groups with their standard HHG structures.

\begin{definition}\label{Def:Omega}
Let \((G,\frS)\) be an HHG. We say that \((G,\frS)\) belongs to \(\Omega\) if the following conditions hold.
\begin{enumerate}
\item\label{Omega:AxialDirections}
For every unbounded domain \(U\in\frS\), there exists an axial element fully supported on \(U\). Moreover, if \(\calC U\) is not a quasi-line, then there exists an infinite collection of axial elements fully supported on \(U\) such that every finite subcollection is geometrically irredundant.
\item\label{Omega:Decomposition}
For every infinite-order element \(g\in G\), writing \(\B(g)=\{U_1,\dots,U_k\}\), there exist \(n>0\) and pairwise commuting axial elements \(h_1,\dots,h_k\in G\) such that \(g^n=h_1\cdots h_k\), where \(h_i\) is fully supported on \(U_i\) for every \(i\).
\item\label{Omega:CommutationPackage}
Let \(f_1\) and \(f_2\) be axial elements fully supported on unbounded domains \(U_1\) and \(U_2\), respectively. Then:
\begin{enumerate}
\item\label{Omega:SameSupportDichotomy}
If \(U_1=U_2\), then either \(f_1\) and \(f_2\) have nonzero powers in common, or the pair \(\{f_1,f_2\}\) is geometrically irredundant.
\item\label{Omega:WeakCommutativity}
If \(U_1\bot U_2\), then there exists \(N>0\) such that \([f_1^N,f_2^N]=1\). 
\item\label{Omega:CommutationDetection}
If \([f_1,f_2]=1\), then either \(f_1\) and \(f_2\) have nonzero powers in common, or \(U_1\bot U_2\).
\end{enumerate}
\end{enumerate}
\end{definition}

\begin{remark}
We say that an HHG \((G,\mathfrak S)\) satisfies the \emph{weak commutativity property} if, whenever \(g,h\in G\) are axial elements fully supported on orthogonal unbounded domains, there exists \(N>0\) such that \(g^N\) and \(h^N\) commute.

As observed in \Cref{Rem:XiandP_2}, every structure in the class \(\Xi\) belongs to \(\mathcal P'_2\). Likewise, every structure in \(\Omega\) belongs to \(\mathcal P_1\cap\mathcal P_2\), by \Cref{Def:Omega}\eqref{Omega:AxialDirections} and~\eqref{Omega:WeakCommutativity}, where \(\mathcal P_1\) denotes the class of HHGs satisfying the weak commutativity property. The decomposition and commutation-control conditions in the definition of \(\Omega\) are the additional ingredients needed to pass from the constructive embedding results of \cite{OP25} to the converse factorization and obstruction results of the present paper.
\end{remark}


\begin{lemma}[Finite-index permanence]\label{Lem:FiniteIndexXiOmega}
Let \((G,\frS)\) be an HHG, and let \(H\le G\) be a finite-index subgroup equipped with the restricted HHG structure. Then the following hold.
\begin{enumerate}
\item\label{Item:WhenGisXi} If \((G,\frS)\in\Xi\), then \((H,\frS)\in\Xi\).
\item\label{Item:WhenGisOmega} If \((G,\frS)\in\Omega\), then \((H,\frS)\in\Omega\).
\end{enumerate}
Moreover, the restricted structure has the same rank and (expanded) core graph as \((G,\frS)\).
\end{lemma}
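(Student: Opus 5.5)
The plan is to compare the two HHG structures directly, since they share the same underlying HHS \((G,\frS)\). All of the purely geometric data are therefore identical: hyperbolic spaces, projections, relations, nesting and orthogonal factors, and in particular \(\frS^\infty\), \(\frG\), the rank and the core graph. The only things that could change are group-theoretic. These are the metric orthogonal stabilizers, which become \(H_U=G_U\cap H\); the bigsets, where \(\B_H(h)=\B_G(h)\) for \(h\in H\) because bigsets are computed from orbits in the same space; and the notions of axial, fully supported and strongly fully supported element. An element of \(H\) is (strongly/rigidly) fully supported in \((H,\frS)\) if and only if it is so in \((G,\frS)\), since all defining conditions refer only to the element's action. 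The expanded core graph depends only on \(\frG\) and on whether each \(\calC U\) is a quasi-line, so it is literally the same graph. This settles the ``moreover'' clause first.

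For \Cref{Lem:FiniteIndexXiOmega}\eqref{Item:WhenGisXi}, I verify the three properties.

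\(\mathbf F_U\) stabilizers. \(H_U=G_U\cap H\) has finite index in \(G_U\). By \Cref{Lem:FiniteIndexHHG} applied to the HHG \((G_U,\frS_U)\), the restricted action makes \(H_U\) an HHG with underlying HHS \((\mathbf F_U,\frS_U)\). Note that the kernel of \(\Stab_H(U)\) acting on \(\mathbf E_U\) is indeed \(G_U\cap H\).

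Orthogonal decomposition. Let \(g\in H\) have infinite order, and write \(g^n=h_1\cdots h_k\) in \(G\) with each \(h_i\) strongly fully supported on \(U_i\). Each \(h_i\) has a positive power \(h_i^{m}\in H\), and we may take a common \(m\). By the commutative property in \(G\), the \(h_i\) pairwise commute, since their supports are pairwise orthogonal by \Cref{Prop:AxialEllipticDichotomy}. Hence \(g^{nm}=h_1^m\cdots h_k^m\). Each \(h_i^m\) lies in \(G_{U_i}\cap H=H_{U_i}\) and still has \(U_i\) as an active domain, so it is strongly fully supported in \((H,\frS)\).

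Commutative property. This is inherited directly, because strong full support in \(H\) implies strong full support in \(G\).

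For \Cref{Lem:FiniteIndexXiOmega}\eqref{Item:WhenGisOmega}, conditions \eqref{Omega:SameSupportDichotomy}, \eqref{Omega:WeakCommutativity} and \eqref{Omega:CommutationDetection} restrict verbatim, since full support, common powers, geometric irredundance and commutation are all unchanged by viewing elements of \(H\) inside \(G\).

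Condition \eqref{Omega:AxialDirections} follows by passing to powers. A fully supported \(f\in G\) has a power \(f^m\in H\). Positive powers stay fully supported on the same domain, because they have the same bigset, bounded orbits on orthogonal domains, and still fix them. Replacing an infinite irredundant family \(\{f_i\}\) by powers \(f_i^{m_i}\in H\) preserves pairwise disjointness of limit sets, so every finite subfamily remains geometrically irredundant.

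Condition \eqref{Omega:Decomposition} is handled exactly as in the \(\Xi\) case: take \(g^n=h_1\cdots h_k\) with pairwise commuting \(h_i\), choose a common \(m\) with \(h_i^m\in H\), and write \(g^{nm}=h_1^m\cdots h_k^m\). The factors \(h_i^m\) still commute pairwise, and each is fully supported on \(U_i\).

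The only step needing real care is the \(\mathbf F_U\) stabilizers property. There I must check that the metric orthogonal stabilizer computed in \(H\) is exactly \(G_U\cap H\) and has finite index in \(G_U\), so that cofiniteness on \(\frS_U\) and properness/coboundedness on \(\mathbf F_U\) pass down. Everything else reduces to the observation that positive powers preserve the relevant support notions.
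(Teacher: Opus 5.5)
Your proposal is correct and follows essentially the same route as the paper. The paper likewise identifies \(H_U=H\cap G_U\) as a finite-index subgroup of \(G_U\) and applies \Cref{Lem:FiniteIndexHHG} to it. It pushes decompositions into \(H\) by taking a common power of the pairwise commuting factors, notes that the commutation conditions are inherited, and observes that the unchanged hierarchical data leave the rank, core graph and expanded core graph the same.
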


\begin{proof}
We first note that, for every \(U\in\frS\), the metric orthogonal stabilizer associated to \(U\) in the restricted HHG structure on \(H\) is \(H_U=H\cap G_U\).
Indeed, the orthogonal metric factor \(\mathbf E_U\) and its action are unchanged when the group action is restricted from \(G\) to \(H\). Thus an element of \(\Stab_H(U)\) acts as the identity on \(\mathbf E_U\) if and only if it belongs to \(H\cap G_U\). Since \(H\) has finite index in \(G\), the subgroup \(H_U\) has finite index in \(G_U\).

\smallskip

\noindent\emph{\eqref{Item:WhenGisXi} Verification of the three conditions defining \(\Xi\) when \((G,\frS)\in\Xi\).}
Let \(U\in\frS\). Since \(H_U=H\cap G_U\) has finite index in \(G_U\), and \((G_U,\frS_U)\) is an HHG, \Cref{Lem:FiniteIndexHHG} implies that the restricted action defines an HHG structure \((H_U,\frS_U)\). Hence \((H,\frS)\) satisfies the \(\mathbf F_U\) stabilizers property.

Next let \(h\in H\) have infinite order, and write \(\B(h)=\{U_1,\dots,U_k\}\).
The orthogonal decomposition property of \((G,\frS)\) gives a positive integer \(n\) and axial elements \(g_1,\dots,g_k\in G\) such that \(h^n=g_1\cdots g_k\), where \(g_i\) is strongly fully supported on \(U_i\). Since the domains \(U_1,\dots,U_k\) are pairwise orthogonal, the commutative property implies that the elements \(g_i\) pairwise commute. For each \(i\), choose \(m_i>0\) such that \(g_i^{m_i}\in H\), and let \(m\) be a common multiple of the integers \(m_i\). Then \(h^{nm}=g_1^m\cdots g_k^m\). Each \(g_i^m\) lies in \(H\) and remains strongly fully supported on \(U_i\). Thus the restricted structure satisfies the orthogonal decomposition property.

Finally, any two axial elements of \(H\) strongly fully supported on orthogonal unbounded domains are also such elements of \(G\), and therefore commute. Hence the commutative property passes to \(H\), and \((H,\frS)\in\Xi\).

\smallskip

\noindent\emph{\eqref{Item:WhenGisOmega} Verification of the three conditions defining \(\Omega\) when \((G,\frS)\in\Omega\).}
Let \(U\in\frS\) be unbounded, and let \(g\in G\) be an axial element fully supported on \(U\). Since \(H\) has finite index in \(G\), some positive power \(g^m\) belongs to \(H\). Positive powers preserve axiality, the bigset, and full support, so \(g^m\) is an axial element of \(H\) fully supported on \(U\).

If \(\calC U\) is not a quasi-line, choose an infinite collection \(\{g_j\}_{j\in\mathbb N}\) of axial elements fully supported on \(U\) such that every finite subcollection is geometrically irredundant. For each \(j\), choose \(m_j>0\) such that \(g_j^{m_j}\in H\). The collection \(\{g_j^{m_j}\}_{j\in\mathbb N}\) still consists of axial elements fully supported on \(U\), and geometric irredundance is preserved under taking nonzero powers.
Hence \Cref{Def:Omega}\eqref{Omega:AxialDirections} holds for the restricted structure.

For \Cref{Def:Omega}\eqref{Omega:Decomposition}, let \(h\in H\) be an infinite-order element, and write \(\B(h)=\{U_1,\dots,U_k\}\).
Since \((G,\frS)\in\Omega\), there exist \(n>0\) and pairwise commuting axial elements \(g_1,\dots,g_k\in G\) such that \(h^n=g_1\cdots g_k\), where \(g_i\) is fully supported on \(U_i\). For each \(i\), choose \(m_i>0\) such that \(g_i^{m_i}\in H\), and let \(m>0\) be a common multiple of the \(m_i\). Since the \(g_i\) pairwise commute, \(h^{nm}=g_1^m\cdots g_k^m\).
Each \(g_i^m\) lies in \(H\) and is fully supported on \(U_i\), so this gives the required decomposition in the restricted structure.

Finally, the three conditions in \Cref{Def:Omega}\eqref{Omega:CommutationPackage} pass to \(H\).
The same-support dichotomy and commutation detection are inherited because they concern the same elements and the same hierarchical data. If \(f_1,f_2\in H\) are fully supported on orthogonal domains, then weak commutativity in \(G\) gives \(N>0\) such that \([f_1^N,f_2^N]=1\). Since both powers belong to \(H\), the same relation holds in \(H\). Therefore \((H,\frS)\in\Omega\).

The restricted HHG structure has the same index set, coordinate spaces, nesting relation, and orthogonality relation as \((G,\frS)\). Consequently, the collection of unbounded domains, the rank, and the collection \(\frG\) of minimal unbounded domains are unchanged. 
The core graph is therefore the same. Moreover, for each \(U\in\frG\), whether \(\calC U\) is a quasi-line is unchanged, so the expanded core graph is also unchanged.
\end{proof}

\begin{proposition}[Permanence of \(\Omega\)]\label{Prop:OmegaPermanence}
The following hold.
\begin{enumerate}
\item\label{Item:OmegaDirectProducts}
Let \((G_1,\frS_1)\) and \((G_2,\frS_2)\) belong to \(\Omega\). Then the standard direct-product HHG structure on \(G_1\times G_2\) belongs to \(\Omega\).

\item\label{Item:OmegaRelativelyHyperbolic}
Let \(G\) be hyperbolic relative to a finite collection \(\mathcal P\). Suppose that each \(P\in\mathcal P\) is equipped with an HHG structure \((P,\frS_P)\in\Omega\). Then the standard relatively hyperbolic HHG structure on \(G\) belongs to \(\Omega\).
\end{enumerate}
Consequently, \(\Omega\) is closed under finite direct products equipped with their standard direct-product HHG structures and under the standard relatively hyperbolic construction.
\end{proposition}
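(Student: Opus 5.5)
The plan is to verify the three groups of conditions in \Cref{Def:Omega} directly for each construction. The main tool is the description of bigsets and of full support in terms of the factor structures. In both constructions the new domains have bounded coordinate spaces, except the maximal domain \(S\) in the relatively hyperbolic case. So every unbounded domain of the new structure is either an unbounded domain of some factor or peripheral copy, or, in the relatively hyperbolic case, \(S\) itself. Orthogonality among such domains is also inherited from the factors.

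\emph{Direct products.} Write \(G=G_1\times G_2\). As in the proof of \Cref{Prop:XiDirectProducts}, we have \(\B_G((g_1,g_2))=\B_{G_1}(g_1)\sqcup\B_{G_2}(g_2)\), and every domain of \(\frS_1\) is orthogonal to every domain of \(\frS_2\). The first step is a support-transfer claim: \((g_1,g_2)\) is fully supported on \(U\in\frS_1\) if and only if \(g_2\) has finite order, \(g_1\) is fully supported on \(U\) in \(G_1\), and \(g_2\) fixes every unbounded domain of \(\frS_2\) with bounded orbits on its coordinate space. This gives a one-way transfer: \((g_1,1)\) is fully supported on \(U\) whenever \(g_1\) is. Since \(G_2\) acts on \(\frS_2\) with finitely many orbits and \(\langle g_2\rangle\) is finite, the claim is immediate. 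Conditions~\eqref{Omega:AxialDirections} and~\eqref{Omega:Decomposition} then follow by embedding the factor witnesses via \(g\mapsto(g,1)\) and \(g\mapsto(1,g)\). In~\eqref{Omega:Decomposition} we first pass to a power killing any finite-order coordinate. For~\eqref{Omega:CommutationPackage}, take \(f=(f_1,f_2)\) fully supported on \(U\in\frS_1\). By the claim \(f_2\) has finite order, so a power \(f^m=(f_1^m,1)\) lies in the first factor. Each of the three subconditions is invariant under replacing elements by nonzero powers: powers preserve common powers, geometric irredundance, and orthogonality of supports, and commuting of the originals implies commuting of the powers. We therefore reduce to elements in a single factor, where the subconditions hold by hypothesis, or to elements in different factors. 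In the latter case the supports are orthogonal and the elements commute outright, which settles~\eqref{Omega:WeakCommutativity} and~\eqref{Omega:CommutationDetection} trivially. For~\eqref{Omega:SameSupportDichotomy} the supports coincide, so both elements lie in the same factor.

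\emph{Relatively hyperbolic structures.} Follow the proof of \Cref{Prop:Xiclass}\eqref{Item:HyperbolicRelto}. Axial elements with \(\B(g)=\{S\}\) are exactly the loxodromics for the acylindrical action on \(\calC S\). Full support on \(S\) is automatic because nothing is orthogonal to \(S\). An element fixing a domain of \(a\frS_P\) lies in \(aPa^{-1}\), so full support on such a domain is the conjugate of full support in \((P,\frS_P)\). One point needs checking here: the peripheral unbounded domains orthogonal to \(U\) are exactly those in the same copy \(a\frS_P\). For~\eqref{Omega:AxialDirections} at \(S\), use \Cref{Lem:ExistenceOfAxialElement}. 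If \(\calC S\) is not a quasi-line, the action is non-elementary, so Osin's theorem gives infinitely many independent loxodromics. If \(\calC S\) is a quasi-line, the condition only asks for one element. For~\eqref{Omega:Decomposition}, an element with \(S\notin\B(g)\) is conjugate into a peripheral subgroup \cite[Theorem~1.14]{Osi06}, and we transport the decomposition. In~\eqref{Omega:CommutationPackage}, orthogonal supports lie in a common peripheral copy, so weak commutativity is inherited.

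\emph{Main obstacle.} I expect condition~\eqref{Omega:CommutationDetection} with mixed supports to be the main difficulty. Suppose \(f_1\) is loxodromic on \(\calC S\) and \(f_2\) lies in \(aPa^{-1}\), or \(f_1,f_2\) lie in different peripheral conjugates. We must show that they cannot commute. If \(f_1\) commutes with \(f_2\), then \(f_1\) normalizes \(\langle f_2\rangle\) and hence fixes \(\B(f_2)=\{U_2\}\subseteq a\frS_P\), which forces \(f_1\in aPa^{-1}\). A peripheral element cannot be loxodromic on \(\calC S\), which gives a contradiction in the first case. In the second case, \(f_1\) lies in two distinct peripheral conjugates, and their intersection is finite by almost malnormality, contradicting that \(f_1\) has infinite order. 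Condition~\eqref{Omega:SameSupportDichotomy} at \(S\) follows as in \Cref{Lem:StronglyFullySupportedElements}\eqref{Item:TwoSFS}, using acylindricity of \(G\curvearrowright\calC S\). The final "consequently" clause follows by induction on the number of factors.
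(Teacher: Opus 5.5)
Your direct-product argument is the paper's. It uses the bigset splitting, passes to a power that kills the finite-order coordinate, and reduces each subcondition of \Cref{Def:Omega}\eqref{Omega:CommutationPackage} to a single factor. (The cofiniteness you invoke for the support-transfer claim plays no role. The claim follows from the bigset splitting and the fact that every domain of \(\frS_2\) is orthogonal to \(U\).) In the relatively hyperbolic case, four parts also follow the paper: existence at \(S\), the decomposition via \cite[Theorem~1.14]{Osi06}, the same-support dichotomy, and weak commutativity.

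Where you differ is commutation detection, and there your case list has a hole. The paper handles a support equal to \(S\) using virtual cyclicity of centralizers of loxodromics \cite[Corollary~6.9]{Osi16}. It handles two peripheral supports by applying almost malnormality twice: first \(f_2\in Q_1\), then \(Q_1=Q_2\). Your observation \(f_1\B(f_2)=\B(f_1f_2f_1^{-1})=\B(f_2)\) instead puts \(f_1\) directly into the peripheral conjugate stabilizing \(U_2\). This is cleaner, uses almost malnormality only once, and shows the mixed configurations never commute.

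However, you omit the case \(U_1=U_2=S\), where this trick is vacuous because every element fixes \(S\). That case needs its own argument. Commuting loxodromics on \(\calC S\) have the same fixed pair in \(\partial\calC S\), since \(f_2f_1^{\pm}=(f_2f_1f_2^{-1})^{\pm}=f_1^{\pm}\) and a loxodromic fixes only its own two boundary points. Hence \(\{f_1,f_2\}\) is not geometrically irredundant, and the same-support dichotomy at \(S\) gives a common nonzero power. Alternatively, cite the centralizer result as the paper does. You should also state explicitly that two supports in the same copy \(a\frS_P\) are handled by conjugating the peripheral commutation-detection condition.
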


\begin{proof}
The two assertions are proved in \Cref{Appendix:OmegaDirectProducts,Appendix:OmegaRelativelyHyperbolic}.
\end{proof}

For the class \(\Omega\), we use fully supported axial elements in place of strongly fully supported ones. Accordingly, for an unbounded domain \(U\in\frS\), define
\[\mathscr D_{\Omega}(U)=\left\{ f\in G \mid f\text{ is axial and fully supported on }U \right\}\big/\sim,\]
where \(f\sim g\) if and only if \(f\) and \(g\) have nonzero powers in common. We refer to the elements of \(\mathscr D_{\Omega}(U)\) as \emph{fully supported axial directions on \(U\)}.

If \(\calC U\) is a quasi-line, then \(\mathscr D_{\Omega}(U)\) consists of a single element. Indeed, by \Cref{Def:Omega}\eqref{Omega:SameSupportDichotomy}, two axial elements fully supported on \(U\) either have nonzero powers in common or form a geometrically irredundant pair. 
The latter is impossible when \(\calC U\) is a quasi-line, since any two loxodromic isometries have the same limit set. If \(\calC U\) is not a quasi-line, then \Cref{Def:Omega}\eqref{Omega:AxialDirections} implies that \(\mathscr D_{\Omega}(U)\) is infinite, and it is countable since \(G\) is countable. Thus, for every \(U\in\frG\), we may fix a bijection \(\iota_U^\Omega\colon\mathscr D_{\Omega}(U)\rightarrow V_U\).

The arguments established above for the class \(\Xi\) now have the following algebraic analogues.

\begin{proposition}\label{Prop:OmegaAnalogues}
Let \(G\) be virtually in \(\Omega\), via a virtual HHG structure \((H,\frS)\).
Then the following hold.
\begin{enumerate}
\item\label{Item:OmegaIntermediate}
Let \(\phi\colon\bbA(\Lambda)\rightarrow G\) be an injective homomorphism. Then there exist a positive integer \(N\) and a RAAG subgroup \(M\le G\), generated by axial elements fully supported on unbounded domains with respect to \((H,\frS)\), such that the assignment \(v\mapsto\phi(v)^{N}\) extends to an injective homomorphism \(\bbA(\Lambda)\rightarrow M\).

\item\label{Item:OmegaObstruction}
For every finite graph \(\Lambda\), if \(\bbA(\Lambda)\le G\), then \(\Lambda\le \EO_k\).

\item\label{Item:OmegaPositiveCases}
If \(\Lambda\) is a forest or the complement of a linear forest, then \(\Lambda\le \EO\) whenever \(\bbA(\Lambda)\le G\).

\item\label{Item:OmegaRankTwo}
If \((H,\frS)\) has rank at most \(2\), then, for every finite graph \(\Lambda\),
\[\bbA(\Lambda)\le G\quad\Longleftrightarrow\quad\Lambda\le \EO.\]
\end{enumerate}
\end{proposition}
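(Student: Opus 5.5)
The plan is to rerun the $\Xi$-arguments of \Cref{Section:IntermediateRAAG} and of the first part of this section. Throughout, ``strongly fully supported'' is replaced by ``fully supported'' and each $\Xi$-axiom is replaced by its $\Omega$-counterpart, used as follows.
\begin{itemize}
\item \Cref{Lem:StronglyFullySupportedElements}\eqref{Item:TwoSFS} is replaced by \Cref{Def:Omega}\eqref{Omega:SameSupportDichotomy}.
\item \Cref{lem:commuting factors} is replaced by \Cref{Def:Omega}\eqref{Omega:CommutationDetection}.
\item The commutative property is replaced by \Cref{Def:Omega}\eqref{Omega:WeakCommutativity}.
\item \Cref{Lem:AxialDirections} is replaced by the bijections $\iota^\Omega_U$ just constructed.
\item \Cref{Thm:RAAGEmbeddingTheorem} is replaced by the first (algebraic) half of \Cref{Prop:SupportedAxialRAAGs}.
\end{itemize}
For a finite collection of fully supported elements, I take $k_1$ to be a common multiple of the finitely many exponents $N$ given by weak commutativity for orthogonal pairs. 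By \Cref{Lem:FiniteIndexXiOmega} and \Cref{Lem:PowerEmbeddingRAAG}, I may first pass to $H$ exactly as in the proof of \Cref{Prop:IntermediateRAAG}.

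For \eqref{Item:OmegaIntermediate}, write $\phi(v)^{n_v}=h_{v,1}\cdots h_{v,k_v}$ using \Cref{Def:Omega}\eqref{Omega:Decomposition}; here the factors pairwise commute by hypothesis, so no commutative property is needed. Pass to a common $n$. Then run \Cref{Lem:FiniteCommonPowerReduction} verbatim with \eqref{Omega:SameSupportDichotomy} in place of \Cref{Lem:StronglyFullySupportedElements}\eqref{Item:TwoSFS}. This yields a geometrically irredundant collection $\alpha_1,\dots,\alpha_r$. The first part of \Cref{Prop:SupportedAxialRAAGs} gives $M=\langle\alpha_1^{L},\dots,\alpha_r^{L}\rangle\cong\bbA(\Gamma)$ for some large $L$ that is a multiple of $k_1$. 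The exponent bookkeeping with $Q$ is then identical to the $\Xi$ case. Quasi-isometric embeddedness is simply dropped.

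The heart of the proof is the analogue of \Cref{lem:intermediate RAAG}: $\Gamma^e\le\EO$. The main obstacle is \Cref{claim:minimal_replacement}, which pushes supports down to minimal unbounded domains $V_i=g_i^{k_i}T_i$. That construction only used that $g_i$ fixes $U_i$ and is loxodromic on $\calC U_i$, together with geometric irredundance of elements sharing a support. The latter follows from \eqref{Omega:SameSupportDichotomy} and \Cref{Lem:ExtensionGraphPowerRigidity}. So the claim transfers.

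Next choose $h_i$ fully supported on $V_i$ with distinct directions in $\mathscr D_\Omega(V_i)$ when $V_i=V_j$. This is possible by \eqref{Omega:AxialDirections}. Take $L$ large and a multiple of the weak-commutativity exponents, and define $\Phi(v_i^w)=\iota^\Omega_{wV_i}([wh_i^Lw^{-1}])$. Conjugates of fully supported elements are fully supported on the translated domain, by equivariance. Well-definedness and injectivity go through as before via \Cref{Lem:ExtensionGraphPowerRigidity}.

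Inducedness needs one change. Suppose $W\bot W'$. Weak commutativity gives only $[(wh_i^Lw^{-1})^{N'},(w'h_j^L w'^{-1})^{N'}]=1$. Since these elements lie in $M'\cong\bbA(\Gamma)$, \Cref{Lem:ExtensionGraphPowerRigidity}\eqref{Item:CommutingPowersExtensionVertices} upgrades this to genuine commutation, so $x\sim y$ in $\Gamma^e$. The other direction uses \eqref{Omega:CommutationDetection} together with \Cref{Lem:ExtensionGraphPowerRigidity}\eqref{Item:CommonPowersExtensionVertices}.

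Items \eqref{Item:OmegaObstruction}--\eqref{Item:OmegaRankTwo} then follow exactly as \Cref{Thm:RAAGObstruction}, \Cref{Cor:PositiveCases} and \Cref{Thm:RankTwoCriterionHHG}, with Kim--Koberda (\Cref{Thm:KK RAAG emb Thm}, \Cref{Thm:EquivalenceforEmbedding}) applied to $\bbA(\Lambda)\le\bbA(\Gamma)$. In rank at most $2$, $\Gamma$ is triangle-free because adjacency means orthogonality of unbounded supports.

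For the reverse implication in \eqref{Item:OmegaRankTwo}, pick fully supported representatives $g_v$ of the directions $x_v$. These are geometrically irredundant by \eqref{Omega:SameSupportDichotomy}. Then apply \Cref{Prop:SupportedAxialRAAGs} with $k_1$ taken from \eqref{Omega:WeakCommutativity} to obtain $\bbA(\Lambda)\le H\le G$.
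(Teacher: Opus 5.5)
Your proposal is correct and follows essentially the same route as the paper: you pass to \(H\), rerun the \(\Xi\)-arguments with fully supported elements in place of strongly fully supported ones, obtain a common exponent \(k_1\) from weak commutativity so that the algebraic half of \Cref{Prop:SupportedAxialRAAGs} applies, and make the same two changes in the extension-graph embedding. Those two changes are upgrading commuting powers to genuine commutation via \Cref{Lem:ExtensionGraphPowerRigidity}\eqref{Item:CommutingPowersExtensionVertices}, and using commutation detection for the converse. You are, if anything, more explicit than the paper in checking that the minimal-replacement claim uses only that each \(g_i\) fixes its support and acts loxodromically on it, together with same-support irredundance.
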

\begin{proof}
By \Cref{Lem:PowerEmbeddingRAAG}, after replacing the images of the standard generators by suitable positive powers, any RAAG embedding into \(G\) may be replaced by an embedding into \(H\). We then work with the HHG structure \((H,\frS)\in\Omega\).

The proofs follow those of \Cref{lem:intermediate RAAG,Lem:FiniteCommonPowerReduction,Prop:IntermediateRAAG,Thm:RAAGObstruction,Cor:PositiveCases,Thm:RankTwoCriterionHHG}, with strongly fully supported axial elements replaced by fully supported ones. The discussion preceding the proposition provides the analogue of \Cref{Lem:AxialDirections}. \Cref{Def:Omega}\eqref{Omega:AxialDirections} and~\eqref{Omega:SameSupportDichotomy} provide the required supply of geometrically irredundant axial directions and the common-power reduction, while \Cref{Def:Omega}\eqref{Omega:Decomposition} replaces the orthogonal decomposition property.

It remains to explain the two modifications required by the weaker commutation assumption. First, for every finite collection of fully supported axial elements, \Cref{Def:Omega}\eqref{Omega:WeakCommutativity} allows us to choose a common positive integer \(k_1\) such that \([f_i^{k_1},f_j^{k_1}]=1\) whenever \(U_i\bot U_j\). Thus the algebraic part of \Cref{Prop:SupportedAxialRAAGs} applies.

Second, in the proof of the extension-graph embedding, if two supporting domains are orthogonal, weak commutativity shows that suitable nonzero powers of the corresponding conjugates of standard generators commute in \(H\). Since these conjugates belong to the intermediate RAAG \(M'\), the same relation holds in \(M'\). By \Cref{Lem:ExtensionGraphPowerRigidity}\eqref{Item:CommutingPowersExtensionVertices}, the conjugates themselves commute in \(M'\). Conversely, \Cref{Def:Omega}\eqref{Omega:CommutationDetection} shows that commuting distinct extension-graph vertices have orthogonal supporting domains, since the common-power alternative would imply that the two vertices coincide by \Cref{Lem:ExtensionGraphPowerRigidity}\eqref{Item:CommonPowersExtensionVertices}.

These observations give all the claimed analogues. As before, one does not obtain quasi-isometric embeddedness of the intermediate RAAG.
\end{proof}

We now record the main example motivating the class \(\Omega\).

\begin{proposition}\label{Prop:MCGinOmega}
Let \(S\) be a finite-type orientable surface with \(\chi(S)<0\).
Then \(\MCG(S)\), equipped with its standard HHG structure, belongs to \(\Omega\). Consequently, for every finite graph \(\Lambda\),
\[\bbA(\Lambda)\le \MCG(S)\quad\Longrightarrow\quad\Lambda\le \EO_k.\]
Moreover, the minimal unbounded domains are the annular domains, so \(\EO\) is naturally identified with the disjointness graph of essential simple closed curves on \(S\). In particular, when the usual curve graph is defined using disjointness, the obstruction above may be written as 
\[\bbA(\Lambda)\le \MCG(S)\quad\Longrightarrow\quad\Lambda\le \calC(S)_k.\]
\end{proposition}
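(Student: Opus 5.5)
The plan is to verify the three axioms of \Cref{Def:Omega} for the standard HHG structure on \(\MCG(S)\) using Nielsen--Thurston theory. After that, the obstruction follows from \Cref{Prop:OmegaAnalogues}\eqref{Item:OmegaObstruction}, and the identification of \(\EO\) is a separate combinatorial check. Recall the standard structure: the domains are isotopy classes of essential (possibly disconnected) subsurfaces, \(\calC U\) is the curve graph (the annular curve graph for annuli), nesting is inclusion, and orthogonality is disjointness.

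First identify the unbounded domains and the minimal ones. Every connected essential subsurface, annuli included, has unbounded curve graph. Disconnected domains have bounded coordinate spaces. Every connected non-annular subsurface contains an annulus, so \(\frG\) is exactly the set of annular domains. Each annular curve graph is quasi-isometric to \(\R\), so \(V_U\) is a single vertex for each core curve. Two annuli are orthogonal if and only if their core curves are disjoint and distinct. Hence \(\EO\) is the disjointness graph of essential simple closed curves, and for \(S\) with \(\chi(S)<0\) this is \(\calC(S)\) under the disjointness convention.

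Next verify the axioms. For \eqref{Omega:AxialDirections}: for an annulus \(U\), the Dehn twist \(T_\gamma\) about its core has \(\B(T_\gamma)=\{U\}\), and it fixes every domain disjoint from \(\gamma\), acting on it by the identity. It is therefore fully supported, in fact rigidly fully supported. For a non-annular connected \(U\), take a pseudo-Anosov on \(U\) extended by the identity. Its bigset is \(\{U\}\), since by Masur--Minsky subsurface projections of its orbit are bounded off \(U\). It acts trivially on the complement, so it is fully supported. Its conjugates by pseudo-Anosovs of \(U\) with independent fixed laminations give infinitely many elements with pairwise disjoint limit sets in \(\partial\calC U\). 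The non-quasi-line case is exactly the non-annular case, except possibly small surfaces such as \(S_{0,3}\), where \(\calC U\) is bounded and the axiom is vacuous.

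For \eqref{Omega:Decomposition}, take a pure power of \(g\) with canonical reduction system \(\sigma\). This power is a product of commuting pieces: a partial pseudo-Anosov on each pA component and a Dehn twist power on each reduction curve. The active domains are precisely these components and annuli, which gives the decomposition.

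For \eqref{Omega:CommutationPackage}: the main preliminary step, and the main technical obstacle, is to show that any axial \(f\) fully supported on \(U\) has a pure power equal to a pseudo-Anosov on \(U\) (respectively a twist power if \(U\) is an annulus) times a finite-order part. The finite-order part becomes trivial after taking powers. The argument: pass to a pure power. Bounded orbits on every unbounded \(\calC V\) with \(V\bot U\) force the restriction to the complement of \(U\) to have no pA components and no twisting there. Twists about \(\partial U\) are excluded by \(\B(f)=\{U\}\). I would handle this carefully via the canonical reduction system and Ivanov's theory.

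Granting this, condition (a) holds because pseudo-Anosovs on \(U\) either share fixed laminations, and then generate a virtually cyclic group so have common powers, or have disjoint limit sets. The annular case is immediate. Condition (b) holds because pure powers are supported on disjoint subsurfaces, so they commute. Condition (c) holds because commuting elements preserve each other's canonical reduction systems. If the supports are not disjoint and not equal, they overlap, and commuting would force \(f_1\) to preserve the support of \(f_2\) in a way that contradicts pA or twisting behavior. If the supports are equal, centralizers of pseudo-Anosovs are virtually cyclic, which gives common powers.

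Finally, by \Cref{Prop:OmegaAnalogues}\eqref{Item:OmegaObstruction} and the identification of \(\EO\) above, we obtain \(\Lambda\le\calC(S)_k\).
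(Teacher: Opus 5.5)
Your proposal is correct. For the supply of axial directions, the decomposition, the same-support dichotomy, weak commutativity, and the identification of \(\EO\) with the disjointness graph, it follows the paper's proof.

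One small difference there: the paper's preliminary observation uses only \(\B(f^n)=\B(f)=\{U\}\) together with the description of the active domains of a pure mapping class. Bounded orbits on orthogonal domains are not needed. In particular, for a partial pseudo-Anosov, bounded orbits on the annuli about \(\partial U\) come from the bigset, not from ``acting trivially on the complement.''

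The genuine difference is in commutation detection, \Cref{Def:Omega}\eqref{Omega:CommutationDetection}. The paper uses no surface topology at this step. It first notes that \(\{f_1,f_2\}\) is geometrically irredundant, using distinct bigsets or the same-support dichotomy. If \(U_1\not\bot U_2\), the algebraic part of \Cref{Prop:SupportedAxialRAAGs} applies with an edgeless defining graph. Hence large powers generate a free group of rank two, which contradicts commutation. This handles the nested, transverse and equal-support cases at once, by the same mechanism as \Cref{lem:commuting factors}.

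Your route is different. Commuting pure powers preserve each other's canonical reduction systems. A partial pseudo-Anosov fixes no curve meeting its support essentially, and \(T_\gamma^k\) fixes only curves disjoint from \(\gamma\). This stays inside classical Nielsen--Thurston theory and avoids the ping-pong machinery. The price is a case analysis you should write out: annular versus non-annular, nested versus transverse, and annuli about boundary curves of the other support counting as orthogonal.

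One step needs repair. In the equal-support case, the centralizer in \(\MCG(S)\) of a partial pseudo-Anosov is not virtually cyclic. It contains everything supported on the complement and the twists about \(\partial U\). You can fix this in either of two ways:
\begin{itemize}
\item restrict to \(U\), and rule out a boundary multitwist discrepancy using the bounded annular projections at \(\partial U\);
\item more simply, invoke the same-support dichotomy: a geometrically irredundant pair cannot commute, since each element (after passing to powers fixing \(U\)) would preserve the other's pair of limit points in \(\partial\calC U\).
\end{itemize}
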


Here, by the \emph{standard HHG structure} on \(\MCG(S)\), we mean the one obtained from the HHG structure on the marking complex $\mathcal{M}(S)$ in \cite[Theorem~11.1]{BHS19II}, after fixing a marking \(\mu_0\in\mathcal{M}(S)\), by transporting it to \(\MCG(S)\) via the \(\operatorname{MCG}(S)\)-equivariant orbit map $g\mapsto g\mu_0$ which is a quasi-isometry.

\begin{proof}
We verify the conditions in \Cref{Def:Omega}. In the standard HHG structure on \(\MCG(S)\), the domains are essential subsurfaces and orthogonality corresponds to disjointness. We use throughout the standard facts about pure mapping classes and their Nielsen--Thurston decompositions summarized in \cite[\S~7.1]{OP25} and the references therein, together with their interpretation there in terms of fully supported axial elements in the standard HHG structure.

We first make the following observation. Let \(f\in\MCG(S)\) be an axial element fully supported on an unbounded domain \(U\). After replacing \(f\) by a positive power, we may assume that \(f\) is pure. By the standard description of the active domains of a pure mapping class, its active domains are precisely the supports of its pseudo-Anosov components together with the annuli corresponding to its nonzero twist components. Since \(\B(f^n)=\B(f)=\{U\}\), the Nielsen--Thurston decomposition of this pure power has exactly one nontrivial component. Thus, after increasing the power if necessary, \(f^n\) is a pure mapping class with connected support \(U\): it is pseudo-Anosov on \(U\) when \(U\) is non-annular, and is a nonzero power of a Dehn twist when \(U\) is annular.

\Cref{Def:Omega}\eqref{Omega:AxialDirections} now follows from the usual supply of pseudo-Anosov mapping classes on non-annular subsurfaces and nonzero powers of Dehn twists on annuli. If \(U\) is non-annular, there exist infinitely many pairwise independent pseudo-Anosov mapping classes supported on \(U\), and hence an infinite collection every finite subcollection of which is geometrically irredundant. If \(U\) is annular, \(\calC U\) is a quasi-line, so only existence is required.

We next verify \Cref{Def:Omega}\eqref{Omega:SameSupportDichotomy}. Let \(f_1,f_2\) be fully supported on the same unbounded domain \(U\). Choose positive powers which are pure with connected support \(U\).
If \(U\) is annular, these powers are nonzero powers of the same Dehn twist, so \(f_1\) and \(f_2\) have nonzero powers in common. If \(U\) is non-annular, the chosen powers are pseudo-Anosov on \(U\). If their limit sets in \(\partial\calC U\) are disjoint, then \(\{f_1,f_2\}\) is geometrically irredundant. Otherwise, the two chosen powers are commensurable and hence have nonzero powers in common. Consequently, in the latter case, \(f_1\) and \(f_2\) themselves have nonzero powers in common.

For \Cref{Def:Omega}\eqref{Omega:Decomposition}, let \(g\in\MCG(S)\) have infinite order. After passing to a positive power, \(g\) is pure. Its Nielsen--Thurston decomposition expresses it as a product of pairwise commuting pseudo-Anosov components and Dehn twists, one for each active domain. Each factor is fully supported on the corresponding active domain. This gives the required decomposition.

We next prove weak commutativity. Let \(f_1,f_2\) be fully supported on orthogonal unbounded domains \(U_1,U_2\). Choose a common positive integer \(N\) such that both \(f_1^N\) and \(f_2^N\) are pure mapping classes with connected supports \(U_1\) and \(U_2\), respectively.
Since \(U_1\bot U_2\), the two supports are disjoint. Hence \([f_1^N,f_2^N]=1\). Thus \Cref{Def:Omega}\eqref{Omega:WeakCommutativity} holds.

Finally, we verify commutation detection. Let \(f_1,f_2\) be commuting axial elements fully supported on \(U_1,U_2\), respectively. Suppose that they do not have nonzero powers in common.
If \(U_1\neq U_2\), then \(\B(f_1)=\{U_1\}\neq\{U_2\}=\B(f_2)\), so the pair \(\{f_1,f_2\}\) is geometrically irredundant. If \(U_1=U_2\), the same conclusion follows from \Cref{Def:Omega}\eqref{Omega:SameSupportDichotomy}.
Suppose, toward a contradiction, that \(U_1\not\bot U_2\). The orthogonality graph determined by \(U_1,U_2\) has no edge, so the commutation hypothesis in the algebraic part of \Cref{Prop:SupportedAxialRAAGs} is vacuous. Consequently, sufficiently large powers of \(f_1\) and \(f_2\) generate a nonabelian free group. This contradicts \([f_1,f_2]=1\). Hence \(U_1\bot U_2\), proving \Cref{Def:Omega}\eqref{Omega:CommutationDetection}.
Therefore the standard HHG structure on \(\MCG(S)\) belongs to \(\Omega\).

The embedding obstruction follows from \Cref{Prop:OmegaAnalogues}\eqref{Item:OmegaObstruction}. Finally, every non-annular unbounded subsurface domain contains an unbounded annular domain, so the minimal unbounded domains are precisely the annuli. Since annular curve graphs are quasi-lines and two annuli are orthogonal exactly when their core curves are disjoint, \(\EO\) is naturally identified with the disjointness graph of essential simple closed curves on \(S\).
\end{proof}

\subsubsection{The RAAG case}

We now explain the relation between the expanded core graph associated to an arbitrary rich family and the extension graph of the ambient RAAG.

Let \(G=\bbA(\Gamma)\), and let \((G,\frS_{\mathcal R})\) be the rich-family HHG structure associated to a rich family \(\mathcal R\). As recalled in \Cref{Subsection:FactorSystems}, domains are represented by parallelism classes of convex subcomplexes of the form \(g\widetilde S_\Lambda\), where \(\Lambda\in\mathcal R\) and \(g\in\bbA(\Gamma)\), or, equivalently, by the corresponding left cosets \(g\bbA(\Lambda)\).

\begin{proposition}\label{Prop:RAAGExpandedCore}
Let \(G=\bbA(\Gamma)\), equipped with the rich-family HHG structure associated to a rich family \(\mathcal R\). Then there is an induced embedding \(\EOW\rightarrow\Gamma^e\).
Moreover, if \(\mathcal R\) contains every singleton subgraph of \(\Gamma\), then \(\EOW\cong\Gamma^e\).
\end{proposition}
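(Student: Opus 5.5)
The plan is to describe the minimal unbounded domains of \((G,\frS_{\mathcal R})\) explicitly and then map axial directions to conjugates of standard generators. A domain \(U=[g\widetilde S_\Lambda]\) has \(\calC U\) quasi-isometric to the factored contact graph of \(g\widetilde S_\Lambda\). This space is unbounded exactly when \(\Lambda\) does not split as a nontrivial join. For a singleton \(\Lambda=\{v\}\) it is a quasi-line, and for \(\Lambda\) irreducible with at least two vertices it is not a quasi-line. Since \(\mathcal R\) contains \(\Gamma\) and every vertex link and is closed under intersections, one checks that a minimal unbounded domain \(U=[g\widetilde S_\Lambda]\in\frG\) is of one of two kinds. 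Either \(\Lambda\) is a singleton in \(\mathcal R\), or \(\Lambda\) is an irreducible (non-join) subgraph in \(\mathcal R\) containing no unbounded proper member of \(\mathcal R\) nested below it.

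\textbf{Defining the map.} By \Cref{Lem:RichFamilyStabilizers}, \(G_U=\Stab_G(g\widetilde S_\Lambda)=g\bbA(\Lambda)g^{-1}\). Hence an axial element strongly fully supported on \(U\) is a conjugate \(gag^{-1}\) with \(a\in\bbA(\Lambda)\) loxodromic on the contact graph of \(\widetilde S_\Lambda\).

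For a singleton domain \(U=[g\widetilde S_{\{v\}}]\), there is a single vertex, and I send it to \(v^g\in V(\Gamma^e)\). This is well-defined: parallel copies of \(g\widetilde S_{\{v\}}\) have the same stabilizer, so the conjugate \(gvg^{-1}\) depends only on \(U\).

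For a non-quasi-line \(U=[g\widetilde S_\Lambda]\) with \(\Lambda\) irreducible and \(|\Lambda|\ge2\), the set \(V_U\) is countably infinite. Here I must choose an injection \(V_U\to V(\Gamma^e)\) whose image consists of conjugates \(hwh^{-1}\), with \(h\in g\bbA(\Lambda)\) and \(w\in V(\Lambda)\), forming an independent set. Concretely, fix \(w\in\Lambda\) and \(u\in\Lambda\) not adjacent to \(w\); such a \(u\) exists because \(\Lambda\) is irreducible with at least two vertices. Then take the vertices \(w^{g u^{n}}\) for \(n\in\Z\). These pairwise do not commute: \(w\) and \(u^nwu^{-n}\) generate a free group when \(u\not\sim w\). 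They are also distinct.

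\textbf{Checking the embedding is induced.} Adjacency in \(\EOW\) is orthogonality of supports. Orthogonality of \([g\widetilde S_\Lambda]\) and \([g'\widetilde S_{\Lambda'}]\) holds when, after a common translation, \(\Lambda'\subseteq\mathrm{lk}(\Lambda)\), i.e. \(\Lambda\ast\Lambda'\subseteq\Gamma\), with the cosets arranged compatibly. In that case all chosen conjugates of vertices of \(\Lambda\) commute with all chosen conjugates of vertices of \(\Lambda'\), because they lie in the commuting subgroups \(k\bbA(\Lambda)k^{-1}\) and \(k\bbA(\Lambda')k^{-1}\). Conversely, suppose two chosen vertices \(a^{x}\) and \(b^{y}\) commute in \(\bbA(\Gamma)\). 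By the standard description of centralizers of conjugates of generators, \(b^{y}\) lies in \(x\bbA(\mathrm{st}(a))x^{-1}\). This forces the corresponding subcomplexes to span a product, so the supports are orthogonal; the case where they share a support is excluded by the choice of independent vertices.

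I expect the main obstacle to be twofold. First, making the choice of vertices for distinct non-quasi-line domains globally injective: images of different domains must never coincide or become adjacent spuriously. Second, the converse direction (commuting implies orthogonal) when the two domains differ but are nested in a common non-singleton member of \(\mathcal R\). For the first point, I would use that a conjugate \(w^{h}\) determines the minimal \(\mathcal R\)-domain containing it only up to the choice of \(\Lambda\). I would therefore enumerate the domains and choose the conjugating sequence \(u^n\) generically, with \(n\) large, so that the resulting conjugates avoid the finitely constrained bad sets.

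\textbf{Isomorphism case.} When \(\mathcal R\) contains every singleton, every unbounded \([g\widetilde S_\Lambda]\) with \(|\Lambda|\ge2\) has an unbounded singleton domain nested in it. Hence \(\frG\) consists exactly of the singleton domains \([g\widetilde S_{\{v\}}]\), which correspond bijectively to the vertices \(v^g\) of \(\Gamma^e\). Orthogonality of two such domains is equivalent to commutation of \(v^g\) and \(w^{h}\). Therefore the map above is a bijective induced embedding, giving \(\EOW\cong\Gamma^e\).
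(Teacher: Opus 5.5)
Your outline (identify \(\frG\), then inject each \(V_U\) into \(G_U\)-conjugates of standard generators) matches the paper's. However, the two points you flag as obstacles are left open, and the remedies you sketch do not close them.

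The missing idea is this: if \(U=[g\widetilde S_\Lambda]\in\frG\), then every infinite-order \(z\in G_U=g\bbA(\Lambda)g^{-1}\) is strongly fully supported on \(U\). Indeed, \((G_U,\frS_U)\) is an HHG by \Cref{Prop:Xiclass}, so \(z\) has a nonempty bigset there by \Cref{Prop:AxialEllipticDichotomy}. Its elements are unbounded and nested in \(U\), hence equal to \(U\) by minimality. Thus \(U\in\B(z)\) and \(z\in G_U\), and then \(\B(z)=\{U\}\) by \Cref{Lem:StrongImpliesRigid}. You never establish this.

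Without it, your injectivity argument is not a proof. Choosing the exponents \(n\) ``generically'' to avoid ``finitely constrained bad sets'' has no justification: there are countably many domains, each receiving infinitely many vertices. With the bigset fact no such care is needed. A chosen conjugate determines its domain through \(\B\), so the images of distinct domains are automatically disjoint.

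The converse direction (commuting implies orthogonal) has the same problem. Your inference that \(b^y\in x\bbA(\mathrm{st}(a))x^{-1}\) ``forces the subcomplexes to span a product'' never uses minimality, and it is false in general. Take \(\Gamma\) to be the path with vertices \(a,b,c,d\) in order, and the rich family generated by \(\Gamma\) and the vertex links. Then \(\{a,c\}=\mathrm{lk}(b)\) and \(\{b,d\}=\mathrm{lk}(c)\) lie in \(\mathcal R\), and \([a,b]=1\). Yet \([\widetilde S_{\{a,c\}}]\) and \([\widetilde S_{\{b,d\}}]\) are not orthogonal, since \(a\) and \(d\) are not adjacent, so no \(a\)-hyperplane crosses a \(d\)-hyperplane.

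The paper closes this gap with the bigset fact above. Two distinct commuting chosen vertices are commuting axial elements strongly fully supported on \(U\) and \(U'\). By \Cref{lem:commuting factors}, either \(U\bot U'\) or they share a nonzero power, and the latter makes them equal by \Cref{Lem:ExtensionGraphPowerRigidity}\eqref{Item:CommonPowersExtensionVertices}. The forward direction is just the commutative property, so no explicit description of orthogonality of cosets is needed.

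The paper also shows that \(\Lambda\) is discrete for \(U\in\frG\): if \(v,w\in\Lambda\) were adjacent, \(\Lambda\cap\mathrm{lk}(v)\) would give an unbounded domain properly nested in \(U\). This replaces your join classification and makes the dichotomy between singleton (quasi-line) and free (non-quasi-line) immediate. Your treatment of the case where \(\mathcal R\) contains all singletons is fine.
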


\begin{proof}
We first describe the minimal unbounded domains. Let \(U\in\frG\), and choose a factor-system representative \(F=g\widetilde S_\Lambda\) for some \(\Lambda\in\mathcal R\). We claim that \(\Lambda\) is discrete.

Suppose otherwise that \(v,w\in V(\Lambda)\) are adjacent. Since \(\mathcal R\) is a rich family, \(\Lambda'=\Lambda\cap\operatorname{lk}_\Gamma(v)\) belongs to \(\mathcal R\). Moreover, \(\Lambda'\) is a proper subgraph of \(\Lambda\) and contains \(w\). Hence \(F'=g\widetilde S_{\Lambda'}\) represents a domain \(U'\sqsubsetneq U\).
By \Cref{Lem:RichFamilyStabilizers}, \(G_{U'}=\Stab_G(F')=g\bbA(\Lambda')g^{-1}\), so \(G_{U'}\) contains the infinite-order element \(gwg^{-1}\).
By \Cref{Prop:Xiclass}, the rich-family HHG structure satisfies the \(\mathbf F_U\) stabilizers property, so \((G_{U'},\frS_{U'})\) is an HHG. Since \(gwg^{-1}\) has infinite order, its bigset in this induced HHG structure is nonempty. Any domain in this bigset is unbounded and nested into \(U'\), and hence is an unbounded domain properly nested into \(U\). 
This contradicts the minimality of \(U\). Therefore \(\Lambda\) is discrete.

By \Cref{Lem:RichFamilyStabilizers} and the coset description above, \(G_U=\Stab_G(F)=g\bbA(\Lambda)g^{-1}\). Define \(W_U=\{h(gvg^{-1})h^{-1} \mid v\in V(\Lambda),\ h\in G_U \} \subseteq V(\Gamma^e)\). We claim that every element of \(W_U\), regarded as an element of \(G\), is strongly fully supported on \(U\).

Indeed, let \(z\in W_U\). Then \(z\in G_U\) and \(z\) has infinite order. Applying \Cref{Prop:AxialEllipticDichotomy} to the induced HHG \((G_U,\frS_U)\), its bigset in this induced structure is nonempty. Every domain in this bigset is unbounded and nested into \(U\), and hence, by the minimality of \(U\), the bigset is precisely \(\{U\}\). In particular, \(U\in\B_G(z)\). Since \(z\in G_U\), it follows from \Cref{Def:StronglyFullySupported} that \(z\) is strongly fully supported on \(U\).

We next compare the cardinalities of \(W_U\) and \(V_U\). If \(\Lambda\) consists of a single vertex, then \(G_U\) is cyclic and \(W_U\) consists of a single vertex. Since \(G_U\cong\mathbb Z\) acts coboundedly on \(\calC U\) and contains a loxodromic element, \(\calC U\) is a quasi-line. Hence \(V_U\) is also a singleton.

Suppose that \(\Lambda\) has at least two vertices. Since \(\Lambda\) is discrete, \(G_U\) is a nonabelian free group, and \(W_U\) is countably infinite. Moreover, \(W_U\) contains two distinct conjugates of standard generators. By \Cref{Lem:ExtensionGraphPowerRigidity}\eqref{Item:CommonPowersExtensionVertices}, these elements have no nonzero powers in common. Since both are strongly fully supported on \(U\), \Cref{Lem:StronglyFullySupportedElements}\eqref{Item:Quasiline} implies that \(\calC U\) is not a quasi-line. Hence \(V_U\) is countably infinite.

Thus, for every \(U\in\frG\), we may choose a bijection \(\psi_U\colon V_U\rightarrow W_U\). Taking their union gives a map \(\psi\colon V(\EOW)\rightarrow V(\Gamma^e)\).

We first show that \(\psi\) is injective. Suppose that \(\psi(x)=\psi(y)=z\), where \(x\in V_U\) and \(y\in V_V\). By the preceding argument, \(z\) is strongly fully supported on both \(U\) and \(V\). Hence \(\B(z)=\{U\}=\{V\}\), so \(U=V\). Since \(\psi_U\) is a bijection, it follows that \(x=y\).

It remains to check that \(\psi\) is induced. Let \(x\in V_U\) and \(y\in V_V\) be distinct. If \(U\bot V\), then \(\psi(x)\) and \(\psi(y)\) commute by the commutative property, since they are strongly fully supported on \(U\) and \(V\), respectively. Hence they are adjacent in \(\Gamma^e\).

Conversely, suppose that \(\psi(x)\) and \(\psi(y)\) are adjacent in \(\Gamma^e\). Then they commute in \(G\). Since they are strongly fully supported on \(U\) and \(V\), respectively, \Cref{lem:commuting factors} implies that either \(U\bot V\), or \(\psi(x)\) and \(\psi(y)\) have nonzero powers in common. 
In the latter case, \Cref{Lem:ExtensionGraphPowerRigidity}\eqref{Item:CommonPowersExtensionVertices} gives \(\psi(x)=\psi(y)\), contradicting the injectivity of \(\psi\). Therefore \(U\bot V\).

Thus, for distinct \(x\in V_U\) and \(y\in V_V\),
\[
\{x,y\}\in E(\EOW) \quad\Longleftrightarrow\quad U\bot V \quad\Longleftrightarrow\quad \{\psi(x),\psi(y)\}\in E(\Gamma^e),\]
so \(\psi\) is an induced embedding.

Finally, suppose that \(\mathcal R\) contains every singleton subgraph of \(\Gamma\). Since every minimal unbounded domain is represented by a discrete subgraph \(\Lambda\), minimality forces \(\Lambda\) to be a singleton. Conversely, every domain represented by a coset of a cyclic standard subgroup is minimal unbounded. Hence the minimal unbounded domains are in bijection with the conjugates of standard generators, and orthogonality is precisely commutation. Therefore the induced embedding above is surjective, and \(\EOW\cong\Gamma^e\).
\end{proof}

\subsection{Chromatic obstructions}

We now derive a chromatic consequence of the embedding obstructions above. For a graph \(\Lambda\), let \(\chi(\Lambda)\) denote its \emph{chromatic number}, namely the minimum number of colours required for a proper vertex-colouring of \(\Lambda\).

Recall that \(\EOW\) is obtained from the core graph \(\mathcal G^{\frS}\) by replacing each vertex \(U\in\frG\) by the independent set \(V_U\), while preserving all edges between the corresponding vertex sets. Hence \(\chi(\EOW)=\chi(\mathcal G^{\frS})\).
In particular, the following consequence of our obstruction results depends only on the chromatic number of the expanded core graph.

\begin{proposition}\label{Prop:ChromaticObstruction}
Let \(G\) be virtually in \(\Xi\) or in \(\Omega\), and suppose that \(\chi(\EOW)<\infty\).
Then, for every positive integer \(M\), there exists a finite graph \(\Lambda_M\) of girth at least \(M\) such that \(\bbA(\Lambda_M)\not\le G\).
\end{proposition}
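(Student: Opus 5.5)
The plan is to combine the clique-graph obstruction with a chromatic bound on clique graphs and then apply Erd\H{o}s's theorem. Set \(c=\chi(\EOW)<\infty\). By \Cref{Thm:RAAGObstruction} in the \(\Xi\) case, and by \Cref{Prop:OmegaAnalogues}\eqref{Item:OmegaObstruction} in the \(\Omega\) case, any finite graph \(\Lambda\) with \(\bbA(\Lambda)\le G\) satisfies \(\Lambda\le(\EOW)_k\). It therefore suffices to show two things: that \((\EOW)_k\) has finite chromatic number, and that there are finite graphs of arbitrarily large girth whose chromatic number exceeds that bound. The second point is exactly Erd\H{o}s's theorem, since an induced subgraph of a graph with chromatic number at most \(C\) has chromatic number at most \(C\).

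The key step is to bound \(\chi((\EOW)_k)\) in terms of \(c\). First, every clique of \(\EOW\) has at most \(c\) vertices, since a proper \(c\)-colouring assigns distinct colours to the vertices of a clique. (Alternatively, one may use that the clique size is at most the rank.) Fix a proper colouring \(\kappa\colon V(\EOW)\to\{1,\dots,c\}\) and colour each clique \(K\) of \(\EOW\) by the set \(\kappa(K)\subseteq\{1,\dots,c\}\). This gives at most \(2^c\) colours.

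To check that this colouring of \((\EOW)_k\) is proper, let \(K\neq K'\) be adjacent cliques. Then \(K\cup K'\) lies in a common clique, on which \(\kappa\) is injective. If \(\kappa(K)=\kappa(K')\), injectivity forces \(K=K'\), which is a contradiction. Hence \(\chi((\EOW)_k)\le 2^c\). Here cliques are understood to be nonempty and finite, which is automatic since they have at most \(c\) vertices.

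To finish, I take \(\Lambda_M\) to be a finite graph with girth at least \(M\) and \(\chi(\Lambda_M)>2^c\), which exists by Erd\H{o}s's theorem. If \(\bbA(\Lambda_M)\le G\), then \(\Lambda_M\) would be an induced subgraph of \((\EOW)_k\), forcing \(\chi(\Lambda_M)\le 2^c\), which is a contradiction. The main point to get right is the clique-graph colouring in the previous step; everything else is direct citation. Note that the finiteness of \(\chi(\EOW)\) is a hypothesis here, so no equivariant colouring argument is needed in this proposition.
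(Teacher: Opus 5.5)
Your proposal is correct and follows the paper's proof: you combine the clique-graph obstruction (\Cref{Thm:RAAGObstruction} or \Cref{Prop:OmegaAnalogues}\eqref{Item:OmegaObstruction}) with a finite bound on \(\chi((\EOW)_k)\) and Erd\H{o}s's theorem. The only difference is that you prove the bound \(\chi((\EOW)_k)\le 2^c\) yourself, by colouring each clique by its set of colours, whereas the paper cites \cite[Lemma~3.2]{KKObstruction} for this step.
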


\begin{proof}
By \cite[Lemma~3.2]{KKObstruction}, the clique graph \((\EOW)_k\) also has finite chromatic number. Set \(\kappa=\chi((\EOW)_k)\).
By a theorem of Erd\H{o}s \cite{Erdos1959GraphTheoryProbability} (see also \cite[\S5]{DiestelGraphTheory}), for every positive integer \(M\), there exists a finite graph \(\Lambda_M\) of girth at least \(M\) and chromatic number at least \(\kappa+1\).

Suppose that \(\bbA(\Lambda_M)\le G\). If \(G\) is virtually in \(\Xi\), then \Cref{Thm:RAAGObstruction} gives \(\Lambda_M\le (\EOW)_k\).
If \(G\) is virtually in \(\Omega\), the same conclusion follows from \Cref{Prop:OmegaAnalogues}\eqref{Item:OmegaObstruction}.
In either case, \(\chi(\Lambda_M)\leq\chi((\EOW)_k)=\kappa\), contradicting the choice of \(\Lambda_M\). Hence \(\bbA(\Lambda_M)\not\le G\).
\end{proof}

One natural way to ensure the hypothesis \(\chi(\EOW)<\infty\) is through colourability. Recall that an HHG \((G,\frS)\) is called \emph{colourable} if \(\frS\) admits a finite partition into subsets consisting of pairwise transverse domains, and the action of \(G\) on \(\frS\) permutes these subsets, called \emph{colours}. Restricting such a colouring to the minimal unbounded domains gives a proper finite colouring of the core graph, and hence \(\chi(\EOW)<\infty\).
Accordingly, we say that a group \(G\) admits a \emph{colourable virtual HHG structure in \(\Xi\)} if \(G\) has a finite-index subgroup \(H\) equipped with a colourable HHG structure \((H,\frS)\in\Xi\).
We use the analogous terminology for \(\Omega\).

\begin{corollary}\label{Cor:ColourableExamples}
Suppose that \(G\) admits a colourable virtual HHG structure belonging to \(\Xi\) or to \(\Omega\). Then, for every positive integer \(M\), there exists a finite graph \(\Lambda_M\) of girth at least \(M\) such that \(\bbA(\Lambda_M)\not\le G\). 
In particular, this conclusion holds for mapping class groups and virtually compact special groups.
\end{corollary}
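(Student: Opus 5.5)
The first assertion will follow directly from \Cref{Prop:ChromaticObstruction}, once we check that a colourable virtual HHG structure has an expanded core graph of finite chromatic number. Let \((H,\frS)\) be the colourable structure on the finite-index subgroup \(H\le G\), lying in \(\Xi\) or in \(\Omega\), with colours \(\frS=\mathcal C_1\sqcup\dots\sqcup\mathcal C_c\). Each colour class consists of pairwise transverse domains, so two orthogonal domains never share a colour. Restricting the colouring to \(\frG\) therefore gives a proper \(c\)-colouring of the core graph \(\mathcal G^{\frS}\).

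Next we pass to the expanded graph. Every vertex of \(V_U\) receives the colour of \(U\). Vertices in a common \(V_U\) are pairwise non-adjacent, because \(U\not\bot U\). An edge between \(V_U\) and \(V_{U'}\) exists only when \(U\bot U'\), and then the colours differ. Hence \(\chi(\EOW)\le c<\infty\), and \Cref{Prop:ChromaticObstruction} gives, for every \(M\), a graph \(\Lambda_M\) of girth at least \(M\) with \(\bbA(\Lambda_M)\not\le G\).

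For the examples we need colourable structures in the right classes.
\begin{itemize}
\item \emph{Mapping class groups.} The standard structure on \(\MCG(S)\) lies in \(\Omega\) by \Cref{Prop:MCGinOmega}. By Bestvina--Bromberg--Fujiwara, there is a finite-index subgroup \(H\) whose action on subsurfaces preserves a finite colouring in which same-coloured subsurfaces overlap, i.e.\ are transverse. The restricted structure \((H,\frS)\) lies in \(\Omega\) by \Cref{Lem:FiniteIndexXiOmega}, and it is colourable. This gives a colourable virtual HHG structure in \(\Omega\).
\item \emph{Virtually compact special groups.} \(G\) has a finite-index subgroup \(\pi_1(X)\) with \(X\) compact special, carrying a rich-family structure, which lies in \(\Xi\) by \Cref{Prop:Xiclass}. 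For colourability, use the convex embedding \(\widetilde X\to\widetilde S_\Gamma\). Colour a domain represented by \(g\widetilde S_\Lambda\) by the subgraph \(\Lambda\in\mathcal R\), which gives finitely many colours, refined if necessary by passing to a further finite-index subgroup. This is the standard colourability of cubical HHG structures due to Hagen--Susse and Durham--Hagen--Sisto; after refinement, same-coloured domains are transverse, and the partition is preserved by the group action.
\end{itemize}

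The main obstacle is verifying colourability for the rich-family structure. Same-labelled cosets \(g\bbA(\Lambda)\) and \(g'\bbA(\Lambda)\) may be nested (only if equal) or orthogonal, and orthogonality must be excluded. I would first try to cite the literature, namely colourability of cocompactly cubulated special groups via hyperplane colourings in special cube complexes. If a direct argument is needed, I would colour by pairs (hyperplane-orbit data), using that specialness forbids self-osculation and inter-osculation. This implies that two \(G\)-translates of the same domain cannot be orthogonal, as they would cross a common hyperplane class. Finally, restriction to finite-index subgroups preserves both colourability and membership in \(\Xi\) or \(\Omega\), by \Cref{Lem:FiniteIndexXiOmega}, so the colourable structure may be taken on any convenient deeper subgroup.
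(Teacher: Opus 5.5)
Your proposal is correct and follows essentially the paper's proof. Restricting the colouring to $\frG$ gives $\chi(\EOW)<\infty$, \Cref{Prop:ChromaticObstruction} then gives the first assertion, and the examples come from \Cref{Prop:MCGinOmega} and \Cref{Prop:Xiclass} combined with cited virtual colourability (Bestvina--Bromberg--Fujiwara for $\MCG(S)$, the cubical literature for rich-family structures) and \Cref{Lem:FiniteIndexXiOmega}.

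The one loose point is your ``colour by $\Lambda$''. It is not well defined on the restricted factor system of $\widetilde X$, whose elements are gates of the $g\widetilde S_\Lambda$, so one domain can arise from several $\Lambda$. As in the paper, the citation is what actually carries that step. A direct fix is to colour by $\pi_1(X)$-orbits: distinct translates cannot be nested, by finite complexity. They also cannot be orthogonal, since otherwise some hyperplane of $X$ would cross itself; that, rather than osculation, is the relevant feature of specialness.
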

\begin{proof}
Let \((H,\frS)\) be a colourable virtual HHG structure on \(G\) belonging to \(\Xi\) or to \(\Omega\). As observed above, colourability implies \(\chi(\EOW)<\infty\).
The first conclusion therefore follows immediately from \Cref{Prop:ChromaticObstruction}.

For a mapping class group, \Cref{Prop:MCGinOmega} shows that the standard HHG structure belongs to \(\Omega\). After passing to a finite-index subgroup, the restricted standard HHG structure admits a finite equivariant colouring; see \cite{BBF15}. By \Cref{Lem:FiniteIndexXiOmega}, the restricted HHG structure still belongs to \(\Omega\) and has the same expanded core graph. Thus the mapping class group admits a colourable virtual HHG structure in \(\Omega\).

Now suppose that \(G\) is virtually compact special. Choose a finite-index subgroup \(H\le G\) which is the fundamental group of a compact special cube complex, and equip \(H\) with a rich-family HHG structure as in \Cref{Subsection:FactorSystems}. By \Cref{Prop:Xiclass}, this structure belongs to \(\Xi\).
Moreover, these cubical HHG structures virtually admit BBF colourings; see \cite{BHS17I,HP22}. Thus, after passing to a further finite-index subgroup \(H_1\le H\), the restricted HHG structure is colourable. By \Cref{Lem:FiniteIndexXiOmega}, this restricted structure still belongs to \(\Xi\) and has the same expanded core graph. The conclusion therefore follows from the first part of the corollary.
\end{proof}

In particular, the mapping class group case recovers \cite[Theorem~1.2]{KKObstruction}.

\subsection{Further questions}

We conclude with several questions concerning the scope of the framework and the extent to which the expanded core graph reflects the RAAG subgroup structure of the ambient group. These questions naturally divide into three directions. First, one may ask which HHG structures belong to the classes \(\Xi\) and \(\Omega\). Second, for a fixed structure in one of these classes, one may ask how much combinatorial information can be extracted from its expanded core graph and how sharply this graph detects RAAG embeddings. Third, since the expanded core graph depends a priori on the chosen HHG structure, one may ask which features persist under changing the structure and whether the resulting collection of expanded core graphs can be controlled by a group-level universal object.

We begin with the scope of the two classes. In our previous work \cite{OP25}, we isolated several families of HHG structures in terms of the existence of suitably supported axial elements and the conversion of orthogonality into commutation. Several standard short HHGs, and more generally combinatorial HHGs, provide natural candidates for the present framework. The converse problem studied here, however, also requires decomposition and commutation-control properties strong enough to associate an intermediate RAAG to an arbitrary RAAG embedding.

\begin{question}\label{Qn:ShortCombinatorialHHGs}
Which short HHGs, or more generally which combinatorial HHGs, admit HHG structures belonging to \(\Xi\) or to \(\Omega\)? In particular, which of the structures considered in \cite{OP25} satisfy the decomposition and commutation-control properties required in the present paper?
\end{question}

We next turn to questions concerning the information carried by the expanded core graph of a fixed HHG structure. \Cref{Prop:ChromaticObstruction} requires only that the expanded core graph have finite chromatic number, while colourability of the HHG structure provides one natural sufficient condition for this finiteness. 
These two conditions, however, concern different parts of the HHG structure: the expanded core graph records only minimal unbounded domains and their orthogonality, whereas colourability is a condition on the entire index set together with the group action.  
Moreover, non-colourable HHGs are known to exist \cite{Hag23}. This raises the following questions.

\begin{question}\label{Qn:ColourabilityAndChromaticNumber}
Let \((G,\frS)\) belong to \(\Xi\) or to \(\Omega\).
What is the relationship between colourability of \((G,\frS)\) and finiteness of \(\chi(\EOW)\)?
In particular, does either class \(\Xi\) or \(\Omega\) contain a non-colourable HHG
structure? If so, can such a structure still satisfy \(\chi(\EOW)<\infty\)?
More generally, under what additional hypotheses, if any, does finite chromatic number of \(\EOW\) imply colourability of the HHG structure?
\end{question}

Chromatic number is only one combinatorial invariant that can be transferred from the clique graph of the expanded core graph to the defining graphs of RAAG subgroups. It is natural to ask whether other such invariants yield further embedding obstructions.

\begin{question}\label{Qn:OtherCombinatorialObstructions}
Which combinatorial invariants of \(\EOW\), or of its clique graph \((\EOW)_k\), are constrained by structural properties of the HHG in ways that yield effective obstructions to RAAG embeddings?
\end{question}

A different issue is how sharp the expanded-core-graph obstruction itself is. The appearance of the clique graph in the general obstruction leaves a gap between \Cref{Thm:RAAGObstruction} and \Cref{Thm:RankTwoCriterionHHG}. In rank at most two, the expanded core graph itself gives a complete embedding criterion, whereas in higher rank our argument only produces an obstruction in its clique graph.

This problem already appears in the classical RAAG setting. Indeed, when \(G=\bbA(\Gamma)\) is equipped with a rich-family HHG structure whose expanded core graph is \(\Gamma^e\), the question reduces to asking when \(\bbA(\Lambda)\le \bbA(\Gamma)\) if and only if \(\Lambda\le \Gamma^e\). This is the converse problem for the Extension Graph Theorem studied by Kim--Koberda \cite{KK13}. The converse does not hold in general \cite{CDK13,LL18}, while several positive cases are known: for example, when \(\Lambda\) is a forest or \(\Gamma\) is triangle-free \cite{KK13}, and when \(\Lambda\) is the complement of a linear forest \cite{Kat18}. Our question asks for an analogue of this problem for expanded core graphs of general HHG structures.

\begin{question}\label{Qn:RemovingCliqueGraph}
For which HHG structures \((G,\frS)\) belonging to \(\Xi\) or
\(\Omega\) does
\[\bbA(\Lambda)\le G\quad\Longleftrightarrow\quad\Lambda\le \EOW\]
hold for every finite graph \(\Lambda\)?
More generally, for which finite graphs \(\Lambda\), or under what conditions on \((G,\frS)\), can the obstruction \(\Lambda\le (\EOW)_k\) be improved to \(\Lambda\le \EOW\)?
\end{question}

We finally turn to questions concerning the dependence of the expanded core graph on the chosen hierarchical structure. Even when the underlying group is fixed, changing the HHG structure may change the collection of minimal unbounded domains and their orthogonality relations, and hence the expanded core graph.

\begin{question}\label{Qn:DependenceOnStructure}
To what extent is the expanded core graph determined by the ambient group? More specifically, which features of \(\EOW\) are invariant under changing the HHG structure on a fixed finite-index subgroup, or under replacing that subgroup by a different finite-index subgroup carrying a structure in \(\Xi\) or \(\Omega\)?
\end{question}

Even if the expanded core graph itself depends essentially on the chosen structure, it is natural to ask whether all such graphs can be controlled by a single graph associated to the underlying group.

\begin{question}[Universal expanded core graphs]
\label{Qn:UniversalExpandedCoreGraph}
Let \(G\) be a group admitting virtual HHG structures in \(\Xi\) or \(\Omega\). Is there a virtual HHG structure on \(G\), belonging to one of these classes, whose expanded core graph contains, as induced subgraphs, the expanded core graphs associated to all other such virtual HHG structures on \(G\)?

More generally, does the collection of expanded core graphs arising from such virtual HHG structures admit a maximal element, up to induced embedding? If so, can such a maximal graph be chosen canonically from \(G\)?
\end{question}

The RAAG case provides a natural test case for this question.
By \Cref{Prop:RAAGExpandedCore}, if \(G=\bbA(\Gamma)\), then \(\Gamma^e\) is a universal receiver for the expanded core graphs of the rich-family HHG structures considered in this paper. It is therefore natural to ask whether this phenomenon persists for more general HHG structures on a RAAG.

\begin{question}\label{Qn:RAAGUniversalExpandedCoreGraph}
Let \(G=\bbA(\Gamma)\). Is the extension graph \(\Gamma^e\) a universal receiver for the expanded core graphs of all HHG structures \((G,\frS)\) belonging to \(\Xi\) or \(\Omega\)? Equivalently, does \(\EOW\le \Gamma^e\) hold for every such HHG structure?

If not, does there exist an HHG structure \((\bbA(\Gamma),\frS)\in\Xi\) or \(\Omega\) whose expanded core graph does not embed as an induced subgraph of \(\Gamma^e\)?
\end{question}

\appendix

\section{Permanence properties of the class \texorpdfstring{\(\Omega\)}{Omega}}\label{Appendix:OmegaPermanence}

\subsection{Finite direct products} \label{Appendix:OmegaDirectProducts}

\begin{proof}[Proof of \Cref{Prop:OmegaPermanence}\eqref{Item:OmegaDirectProducts}]
Set \(G=G_1\times G_2\), and equip \(G\) with the standard direct-product HHG structure. All domains added in the product construction have bounded associated hyperbolic spaces. Hence every unbounded domain of the product structure belongs to either \(\frS_1\) or \(\frS_2\). Moreover, for every \(g=(g_1,g_2)\in G\),
\[\B_G(g)=\B_{G_1}(g_1)\sqcup\B_{G_2}(g_2).
\tag{*}\label{Eq:ProductBigsetOmega}\]
It therefore suffices to show that \(G\), equipped with the standard direct-product HHG structure, satisfies the three conditions in \Cref{Def:Omega}; the finite direct-product conclusion then follows by induction.

\smallskip

\noindent\emph{\Cref{Def:Omega}\eqref{Omega:AxialDirections}.} Let \(U\in\frS_1\) be unbounded, and let \(f\in G_1\) be fully supported on \(U\). Then \((f,1)\in G\) is fully supported on \(U\) in the product structure. Indeed, its only active domain is \(U\), its action on domains in \(\frS_1\) is inherited from \(f\), and it acts trivially on every domain in \(\frS_2\), all of which are orthogonal to \(U\). The same argument applies to domains in \(\frS_2\). The required infinite geometrically irredundant collections are preserved under the embeddings
\[G_1\longrightarrow G_1\times\{1\},\qquad G_2\longrightarrow\{1\}\times G_2.\]

\smallskip

\noindent\emph{\Cref{Def:Omega}\eqref{Omega:Decomposition}.} Let \(g=(g_1,g_2)\in G\) have infinite order. After replacing \(g\) by a positive power, we may assume that each finite-order coordinate is trivial.
Applying the decomposition property to each infinite-order coordinate and then passing to a common positive power, we obtain \(g_1^n=h_1\cdots h_r\) and \(g_2^n=k_1\cdots k_s\), where the \(h_i\) are pairwise commuting and fully supported on the domains in \(\B_{G_1}(g_1)\), and the \(k_j\) are pairwise commuting and fully supported on the domains in \(\B_{G_2}(g_2)\). If \(g_1=1\), we take \(r=0\), and if \(g_2=1\), we take \(s=0\), with the corresponding product understood to be empty.
Therefore
\[g^n=(h_1,1)\cdots(h_r,1)(1,k_1)\cdots(1,k_s).\]
The displayed factors are pairwise commuting, and by \eqref{Eq:ProductBigsetOmega} they are fully supported on precisely the active domains of \(g\). This proves the decomposition property.

\smallskip

\noindent\emph{\Cref{Def:Omega}\eqref{Omega:CommutationPackage}.}
Write \(f=(f_1,f_2)\) and \(h=(h_1,h_2)\). We first record that if \(f\) is fully supported on an unbounded domain  \(U\in\frS_1\), then \eqref{Eq:ProductBigsetOmega} implies that \(f_1\) is fully supported on \(U\) in \((G_1,\frS_1)\), while \(f_2\) has finite order. The analogous statement holds for domains in \(\frS_2\).

First suppose that \(f\) and \(h\) are fully supported on the same domain \(U\in\frS_1\). By \Cref{Def:Omega}\eqref{Omega:SameSupportDichotomy} in \(G_1\), either \(f_1\) and \(h_1\) have nonzero powers in common, or they are geometrically irredundant.
In the latter case, \(f\) and \(h\) are geometrically irredundant, since their actions on \(\calC U\) agree with those of \(f_1\) and \(h_1\). In the former case, choose nonzero integers \(p,q\) such that \(f_1^p=h_1^q\). Choose \(t>0\) divisible by the orders of \(f_2\) and \(h_2\). Then \(f^{tp}=(f_1^{tp},1)=(h_1^{tq},1)=h^{tq}\). Thus the same-support dichotomy holds in \(G\).

Next suppose that \(f\) and \(h\) are fully supported on orthogonal unbounded domains. If both supporting domains belong to \(\frS_1\), weak commutativity in \(G_1\) gives \(N_0>0\) such that \([f_1^{N_0},h_1^{N_0}]=1\). After replacing \(N_0\) by a multiple which kills the finite-order coordinates \(f_2\) and \(h_2\), we obtain \([f^{N_0},h^{N_0}]=1\). The case where both domains belong to \(\frS_2\) is symmetric.
If the supporting domains belong to different factors, say the support of \(f\) lies in \(\frS_1\) and that of \(h\) lies in \(\frS_2\), then \(f_2\) and \(h_1\) have finite order. After choosing \(N>0\) which kills both finite-order coordinates, we have \(f^N=(f_1^N,1)\) and \(h^N=(1,h_2^N)\), and these elements commute. Thus \Cref{Def:Omega}\eqref{Omega:WeakCommutativity} holds.

Finally, suppose that \(f\) and \(h\) commute. If their supporting domains belong to different factors, then the domains are orthogonal. Suppose instead that both supports belong to \(\frS_1\). Then \(f_1\) and \(h_1\) commute in \(G_1\). By \Cref{Def:Omega}\eqref{Omega:CommutationDetection} in \(G_1\), either their supporting domains are orthogonal, or \(f_1\) and \(h_1\) have nonzero powers in common. In the latter case, choose nonzero integers \(p,q\) such that \(f_1^p=h_1^q\), and choose \(t>0\) divisible by the orders of \(f_2\) and \(h_2\). Then \(f^{tp}=(f_1^{tp},1)=(h_1^{tq},1)=h^{tq}\). Hence \(f\) and \(h\) have nonzero powers in common. This proves commutation detection. The case of supports in \(\frS_2\) is symmetric. Hence the product structure belongs to \(\Omega\).
\end{proof}

\subsection{Relative hyperbolicity}\label{Appendix:OmegaRelativelyHyperbolic}

\begin{proof}[Proof of \Cref{Prop:OmegaPermanence}\eqref{Item:OmegaRelativelyHyperbolic}]
Equip \(G\) with the standard relatively hyperbolic HHG structure considered in the proof of \Cref{Prop:Xiclass}\eqref{Item:HyperbolicRelto}, and let \(S\) be its maximal domain. We use the notation and structural description from that proof.

For a peripheral copy \(a\frS_P\), set \(Q=aPa^{-1}\). In particular, if \(U\in a\frS_P\) and \(f\in G\) fixes \(U\), then \(f\in Q\), and conjugation by \(a^{-1}\) identifies the HHG structure on \(Q\) induced by \(a\frS_P\) with \((P,\frS_P)\). Moreover, every element of \(Q\) has bounded orbit in \(\calC S\).
Consequently, for \(f\in Q\), the active domains of \(f\) in \((G,\frS)\) are precisely the translates by \(a\) of the active domains of \(a^{-1}fa\) in \((P,\frS_P)\). In particular, \(f\) is fully supported on \(U\in a\frS_P\) if and only if \(a^{-1}fa\) is fully supported on \(a^{-1}U\) in the peripheral structure.

We now verify the three conditions in \Cref{Def:Omega}.

\smallskip

\noindent\emph{\Cref{Def:Omega}\eqref{Omega:AxialDirections}.}
Let \(U\in\frS\) be unbounded. If \(U\in a\frS_P\), the conclusion follows immediately from \Cref{Def:Omega}\eqref{Omega:AxialDirections} for \((P,\frS_P)\), together with the observation above.

Suppose that \(U=S\). By \Cref{Lem:ExistenceOfAxialElement}, there exists an axial element \(g\in G\) with \(\B(g)=\{S\}\), and hence \(g\) is fully supported on \(S\).

If \(\calC S\) is not a quasi-line, then the cobounded acylindrical action \(G\curvearrowright\calC S\) is non-elementary. Indeed, it contains a loxodromic element, and an elementary cobounded action with a loxodromic element would force \(\calC S\) to be a quasi-line. Hence, by \cite[Theorem~1.1]{Osi16}, the action contains infinitely many pairwise independent loxodromic elements. Since no domain is orthogonal to \(S\), each of these elements has bigset \(\{S\}\) and is therefore fully supported on \(S\). Their pairwise disjoint limit sets imply that every finite subcollection is geometrically irredundant.

\smallskip

\noindent\emph{\Cref{Def:Omega}\eqref{Omega:Decomposition}.}
Let \(g\in G\) have infinite order. If \(S\in\B(g)\), then \(\B(g)=\{S\}\), since the active domains of \(g\) are pairwise orthogonal and no domain is orthogonal to \(S\). Thus \(g\) itself gives the required one-factor decomposition.

Suppose that \(S\notin\B(g)\). Then \(g\) has bounded orbit in the coned-off graph \(\calC S\). Since \(\calC S\) is quasi-isometric to the relative Cayley graph, \cite[Theorem~1.14]{Osi06} implies that every infinite-order element which is not conjugate into a peripheral subgroup has unbounded, indeed loxodromic, orbit in \(\calC S\). Hence \(g\) is parabolic, and therefore \(g\) is conjugate into a peripheral subgroup.
Thus, after choosing \(P\in\mathcal P\) and \(a\in G\), we may write \(p=a^{-1}ga\in P\). Applying \Cref{Def:Omega}\eqref{Omega:Decomposition} in \((P,\frS_P)\), choose \(n>0\) and pairwise commuting axial elements \(h_1,\dots,h_k\in P\) such that \(p^n=h_1\cdots h_k\), where \(h_i\) is fully supported on \(U_i\in\B_P(p)\). Conjugating by \(a\) gives 
\[g^n=(ah_1a^{-1})\cdots(ah_ka^{-1}).\]
By the observation above, the factors are fully supported on the domains \(aU_i\), and \(\B_G(g)=\{aU_1,\dots,aU_k\}\). This is the required decomposition.

\smallskip

\noindent\emph{\Cref{Def:Omega}\eqref{Omega:CommutationPackage}.}
First, we verify \Cref{Def:Omega}\eqref{Omega:SameSupportDichotomy}. Let \(f_1,f_2\) be fully supported on the same unbounded domain \(U\).
If \(U\in a\frS_P\), the conclusion follows from the corresponding peripheral condition. If \(U=S\), then \(f_1\) and \(f_2\) are loxodromic for the acylindrical action on \(\calC S\). If their limit sets are disjoint, the pair is geometrically irredundant.
Otherwise, the two loxodromic elements are commensurable \cite[\S6]{Osi16}, and hence have nonzero powers in common.

Next, let \(f_i\) be fully supported on \(U_i\), for \(i=1,2\). If \(U_1\bot U_2\), then the two domains belong to a common peripheral copy \(a\frS_P\). The observation above places \(f_1,f_2\) in \(aPa^{-1}\), and their conjugates are fully supported on orthogonal domains of \((P,\frS_P)\). Hence there exists \(N>0\) such that \([(a^{-1}f_1a)^N,(a^{-1}f_2a)^N]=1\). Conjugating back gives \([f_1^N,f_2^N]=1\). Thus \Cref{Def:Omega}\eqref{Omega:WeakCommutativity} holds.

Finally, suppose that \(f_1\) and \(f_2\) commute. If one of the domains, say \(U_1\), is \(S\), then \(f_1\) is loxodromic on \(\calC S\). Its centralizer is virtually cyclic by \cite[Corollary~6.9]{Osi16}, so the infinite-order elements \(f_1\) and \(f_2\) have nonzero powers in common.
We may therefore assume that neither \(U_i\) is \(S\). Let \(a_i\frS_{P_i}\) be the peripheral copy containing \(U_i\), and set \(Q_i=a_iP_i a_i^{-1}\).
The observation above gives \(f_i\in Q_i\). Since \(f_1\) and \(f_2\) commute, \(\langle f_1\rangle\leq Q_1\cap f_2Q_1f_2^{-1}\).
This intersection is infinite, so the almost malnormality of peripheral conjugates \cite[Proposition~2.36]{Osi06} implies that \(f_2\in Q_1\). Consequently, \(Q_1\cap Q_2\) is infinite, and a second application of the same result gives \(Q_1=Q_2\).
Thus the two elements and their supporting domains lie in a common peripheral copy. Applying \Cref{Def:Omega}\eqref{Omega:CommutationDetection} in the corresponding peripheral \(\Omega\)-structure, we conclude that either \(f_1\) and \(f_2\) have nonzero powers in common, or \(U_1\bot U_2\). Hence \((G,\frS)\in\Omega\).
\end{proof}

\section{Comparison with the Abbott--Behrstock framework}\label{Appendix:ComparisonWithAB}

Recall that throughout this paper \(G_U\) denotes the subgroup introduced in \cite[Definition~3.1]{AB23}, which we call the \emph{metric orthogonal stabilizer}. Thus \(G_U\) consists of the elements of \(\Stab_G(U)\) acting as the identity on the underlying metric factor \(\mathbf E_U\).

There is also a natural hierarchical version. Restriction to the orthogonal HHS factor gives a homomorphism \(\theta_U^\perp\colon\Stab_G(U)\rightarrow\Aut(\frS_U^\perp)\); see \cite[Remark~1.14]{DHS17}. We define \(G_U^{\mathrm{HHS}}=\ker\theta_U^\perp\). Thus \(G_U^{\mathrm{HHS}}\) detects the entire hierarchical structure on the orthogonal factor, including bounded domains, whereas \(G_U\) only records its underlying metric action.

\begin{lemma}\label{Lem:HierarchicalMetricStabilizers}
For every \(U\in\frS\), \(G_U^{\mathrm{HHS}}\leq G_U\).
\end{lemma}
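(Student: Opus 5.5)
To show \(G_U^{\mathrm{HHS}}\leq G_U\), I would take \(g\in G_U^{\mathrm{HHS}}\) and check directly that its induced action on the tuple space \(\mathbf E_U\) is the identity. By definition, \(g\in\Stab_G(U)\), and \(\theta_U^\perp(g)\) is trivial in \(\Aut(\frS_U^\perp)\). By the equivalence relation of \Cref{Def:Automorphism}, this means two things. First, \(g^\sharp\) restricts to the identity permutation of \(\frS_U^\perp\). Second, for every \(V\in\frS_U^\perp\), the coordinate isometry \(g^\diamond(V)\colon\calC V\to\calC V\) is the identity.

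The main step is then to compare with the tuple formula recorded before \Cref{Def:OurGUandFU}. For \(\mathbf b=(b_V)_{V\bot U}\in\mathbf E_U\), that formula gives \((g\mathbf b)_{gV}=g^\diamond(V)b_V\). Every domain \(V\) orthogonal to \(U\) lies in \(\frS_U^\perp\): in the clean-container case \(\frS_U^\perp=\frS_{C_U}\) contains all \(V\bot U\), and otherwise \(\frS_U^\perp\) is \(\{V\bot U\}\cup\{A_U\}\). So we get \(gV=V\) and \(g^\diamond(V)=\id\), hence \((g\mathbf b)_V=b_V\) for every coordinate of the tuple model. Thus \(g\) fixes every point of \(\mathbf E_U\), which is exactly the condition defining \(G_U\). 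If no domain is orthogonal to \(U\), then \(\mathbf E_U\) is a point, so \(G_U=\Stab_G(U)\) and the inclusion is trivial.

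The only delicate point is identifying the restriction homomorphism \(\theta_U^\perp\) of \cite[Remark~1.14]{DHS17} with the coordinatewise action on \(\mathbf E_U\) used in the tuple model. In particular, the \(\diamond\)-isometries of the restricted automorphism must be the original ones, and not ones modified up to coarse error. I would handle this by noting that, by \Cref{Def:HHG}, the structure is taken \(G\)-equivariant. Then both actions are built from the same data \((g^\sharp,g^\diamond)\), so triviality of the automorphism class gives literal equality on coordinates.

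The adjoined domain \(A_U\) in the non-clean case plays no role, since it is not a coordinate of the tuple space. The tuple model used for \(\mathbf E_U\) via \(\mathbf F_{C_U}\) in the clean case has the same coordinates \(V\bot U\). Hence the argument is uniform across the two cases.
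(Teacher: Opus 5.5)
Your proposal is correct and follows the paper's proof. An element of \(G_U^{\mathrm{HHS}}\) fixes every domain of \(\frS_U^\perp\) and acts as the identity on each associated coordinate space, so the coordinatewise formula \((g\mathbf b)_{gV}=g^\diamond(V)b_V\) shows that it fixes every tuple of \(\mathbf E_U\), and hence it lies in \(G_U\); your remarks on the case with no orthogonal domain, on \(A_U\), and on equivariance just spell out points the paper's short proof leaves implicit.
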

\begin{proof}
Let \(g\in G_U^{\mathrm{HHS}}\). Then \(g\) fixes every domain in \(\frS_U^\perp\) and induces the identity on every associated coordinate space. Hence it fixes every consistent tuple in \(\mathbf E_U\), and therefore acts as the identity on the metric factor \(\mathbf E_U\). Thus \(g\in G_U\).
\end{proof}

We next compare the two versions of the \(\mathbf F_U\) stabilizers property. The condition in \cite[Definition~3.3]{AB23} requires \(G_U\) to be uniformly coarsely equal to a chosen standard copy of \(\mathbf F_U\). Our condition uses the same subgroup \(G_U\), but requires the natural action \(G_U\curvearrowright(\mathbf F_U,\frS_U)\) to define an HHG structure. In particular, it requires cofiniteness of the action \(G_U\curvearrowright\frS_U\).

In \cite[Lemma~3.4]{AB23}, the former condition is asserted to imply that \((G_U,\frS_U)\) is an HHG. The following example shows that this need not hold in general.

\begin{example}\label{Ex:BoundedDomainPathology}
Let \(G=\mathbb Z^2=\langle a,b\rangle\). Begin with the standard product HHS structure on \(G\), with maximal domain \(S\) and orthogonal domains \(U,V\sqsubsetneq S\), where
\[\calC S=\{*\},\qquad \calC U=\calC V=\mathbb R, \qquad \pi_U(a^pb^q)=p,\qquad \pi_V(a^pb^q)=q.\]
For every \((n,m)\in\mathbb Z^2\), adjoin a domain \(B_{n,m}\), and set
\[\frS=\{S,U,V\}\cup\{B_{n,m}\mid n,m\in\mathbb Z\}.\]
Declare \(B_{n,m}\sqsubsetneq V\), \(\calC B_{n,m}=\{*\}\), and \(B_{n,m}\pitchfork B_{p,q}\) whenever \((n,m)\neq(p,q)\). Since \(V\bot U\), nesting inheritance gives \(B_{n,m}\bot U\) for every \(n,m\). Define \(\rho_V^{B_{n,m}}=m\in\mathbb R\).
All remaining projections and relative projections involving the new domains are inherited from the standard product structure or are uniquely determined because their target spaces are points.

These data define an HHS structure on \(G\). Indeed, the only additional points to note are that \(V\) is a container for the domains orthogonal to \(U\), \(U\) is a container for the domains orthogonal to each \(B_{n,m}\), and the complexity increases by only one; the remaining axioms involving the new domains are immediate since their associated hyperbolic spaces are points.

Define the \(G\)-action by \(aB_{n,m}=B_{n+1,m}\) and \(bB_{n,m}=B_{n,m+1}\), while \(a\) and \(b\) fix \(S,U,V\). On the unbounded coordinate spaces, let \(a\) act by translation by \(1\) on \(\calC U\) and trivially on \(\calC V\), and let \(b\) act trivially on \(\calC U\) and by translation by \(1\) on \(\calC V\). 
The actions on the one-point spaces are trivial. The projections and relative projections involving the new domains are \(G\)-equivariant; in particular,
\[a\rho_V^{B_{n,m}}=m=\rho_V^{B_{n+1,m}},\qquad b\rho_V^{B_{n,m}}=m+1=\rho_V^{B_{n,m+1}}.\]
The domains \(B_{n,m}\) form a single \(G\)-orbit, so \(G\curvearrowright\frS\) is cofinite. Hence \((G,\frS)\) is an HHG.

For the domain \(U\), we have \(\frS_U^\perp=\{V\}\cup\{B_{n,m}\mid n,m\in\mathbb Z\}\). Since every \(\calC B_{n,m}\) is a point, the metric factor \(\mathbf E_U\) is naturally identified with \(\mathbb R\) through its \(V\)-coordinate. Hence \(G_U=\langle a\rangle\).
On the other hand, every nonzero power of \(a\) nontrivially permutes the domains \(B_{n,m}\). Since \(G_U^{\mathrm{HHS}}\leq G_U\) by \Cref{Lem:HierarchicalMetricStabilizers}, it follows that \(G_U^{\mathrm{HHS}}=\{1\}\lneq G_U\). Thus the two orthogonal stabilizers can differ solely because an element may permute bounded domains of the orthogonal factor while acting trivially on its underlying metric space.

We now check the \(\mathbf F_U\) stabilizers property of \cite{AB23}. Choosing \(S,U,V,B_{0,0}\) as orbit representatives, their metric orthogonal stabilizers are
\[G_S=G,\qquad G_U=\langle a\rangle,\qquad G_V=\langle b\rangle,\qquad G_{B_{0,0}}=\{1\}.\]
These are uniformly coarsely equal to the corresponding standard copies of \(\mathbf F_S,\mathbf F_U,\mathbf F_V,\mathbf F_{B_{0,0}}\), respectively. Hence the \(\mathbf F_U\) stabilizers property of \cite{AB23} holds.

However, \(\frS_V=\{V\}\cup\{B_{n,m}\mid n,m\in\mathbb Z\}\), \(G_V=\langle b\rangle\), and \(bB_{n,m}=B_{n,m+1}\).
Thus the \(G_V\)-orbits of the domains \(B_{n,m}\) are indexed by \(n\in\mathbb Z\). In particular, \(G_V\curvearrowright\frS_V\) is not cofinite, so \((G_V,\frS_V)\) is not an HHG with the induced structure. This gives a counterexample to \cite[Lemma~3.4]{AB23}.

Finally, every \(B_{n,m}\) is inessential. Hence passage to the essential index set in \Cref{Lem:EssentialReduction} removes all these domains and recovers the standard product HHS structure on \(\mathbb Z^2\), in which the metric and hierarchical orthogonal stabilizers coincide.
\end{example}

\begin{remark}[Effect on the results of \cite{AB23}]
\label{Rem:ABLemma34Repair}
The failure of \cite[Lemma~3.4]{AB23} does not appear to affect the main results of that paper. Its use in the proof of \cite[Lemma~3.9]{AB23} can be bypassed by working in the ambient HHG.

Retain the notation from that proof:
\[\B(h)=\{H_1,\dots,H_k\},\qquad h=h_{H_1}\cdots h_{H_k}h_C,\]
where \(C\) is the clean container associated to \(\B(h)\) and \(h_C\in G_C\).

Suppose that \(h_C\) has infinite order, and choose \(U\in\B(h_C)\). By the \(\mathbf F_U\) stabilizers property of \cite{AB23}, the subgroup \(G_C\) is uniformly coarsely equal to a chosen standard copy \(F\) of \(\mathbf F_C\). Since \(\pi_V(F)\) has uniformly bounded diameter whenever \(V\not\sqsubseteq C\), it follows that every active domain of \(h_C\), and in particular \(U\), is nested into \(C\).
Since \(C\bot H_i\), we have \(U\bot H_i\) for every \(i\). Moreover, \(h_{H_i}\in G_{H_i}\), so \Cref{Lem:StrongImpliesRigid} shows that each \(h_{H_i}\) fixes \(U\) and acts trivially on \(\calC U\). The commutative property of \cite{AB23} also gives \([h_C,h_{H_i}]=1\).
Choose \(m>0\) so that \(h_C^m\) fixes \(U\) and acts loxodromically on \(\calC U\). Then
\[h^m=h_{H_1}^m\cdots h_{H_k}^m h_C^m,\]
and the action of \(h^m\) on \(\calC U\) agrees with that of \(h_C^m\). Hence \(U\in\B(h)\). This contradicts \(\B(h)=\{H_1,\dots,H_k\}\), since \(U\sqsubseteq C\) and \(C\bot H_i\) imply \(U\neq H_i\) for every \(i\). Therefore \(h_C\) has finite order.

The uniform exponent required in \cite[Lemma~3.9]{AB23} can then be obtained by applying \cite[Theorem~G]{HHP23} to the ambient HHG \(G\), rather than to \(G_C\). Thus this part of the argument does not require \((G_C,\frS_C)\) itself to be an HHG.
\end{remark}

\begin{remark}[Comparison with Abbott--Behrstock (continued)]
\label{Rem:ComparisonWithABConti}
We finally justify the implication between the two orthogonal decomposition properties stated in \Cref{Rem:ComparisonWithAB}.

Let \(g\) have infinite order, and choose \(n>0\) so that \(g^n\) fixes \(\B(g)=\{U_1,\dots,U_k\}\) elementwise. The orthogonal decomposition property of \cite{AB23} gives
\[g^n=h_{U_1}\cdots h_{U_k},\qquad h_{U_i}\in G_{U_i}.\]
For \(j\neq i\), since \(U_j\bot U_i\), \Cref{Lem:StrongImpliesRigid} shows that \(h_{U_j}\) acts trivially on \(\calC U_i\). Hence the action of \(h_{U_i}\) on \(\calC U_i\) agrees with that of \(g^n\), and is therefore loxodromic. Thus \(U_i\in\B(h_{U_i})\), so \(h_{U_i}\) is strongly fully supported on \(U_i\). This proves that the orthogonal decomposition property of \cite{AB23} implies ours.

Likewise, their commutative property, which requires \([G_U,G_V]=1\) whenever \(U\bot V\), immediately implies our commutative property. Together with the comparison of the \(\mathbf F_U\) stabilizers properties above, this gives the relation between the two classes recorded in \Cref{Rem:ComparisonWithAB}.
\end{remark}

The example above leaves open whether the metric and hierarchical orthogonal stabilizers can differ in an essential HHG structure. Indeed, \Cref{Lem:StrongImpliesRigid} shows that an element of \(G_U\) fixes every unbounded domain orthogonal to \(U\) and acts trivially on its coordinate space, but this need not a priori determine the induced action on bounded domains into which such unbounded domains are nested, or on their coordinate spaces.

\begin{question}\label{Qn:MetricHierarchicalStabilizers}
Let \((G,\frS)\) be an HHG such that \(\frS=\frS_{\mathrm{ess}}\). Is it true that \(G_U^{\mathrm{HHS}}=G_U\) for every \(U\in\frS\)?
\end{question}

\bibliographystyle{alpha}
\bibliography{biblio.bib}


\end{document}